\pgfplotsset{compat=1.15}
\newtheorem{mquestion}{Main Question}
\newtheorem*{acknowledgement}{Acknowledgement}
\newtheorem{oquestion}{Open Question}
\newtheorem{theorem}{Theorem}[section]
\newtheorem{lemma}[theorem]{Lemma}
\newtheorem{remark}[theorem]{Remark}
\newtheorem{fact}[theorem]{Fact}
\newtheorem{corollary}[theorem]{Corollary}
\newtheorem{example}[theorem]{Example}
\newtheorem{definition}[theorem]{Definition}
\newtheorem{proposition}[theorem]{Proposition}
\newtheorem*{conjecture}{Conjecture}
\newtheorem*{recall}{Recall}
\newtheorem*{motivation}{Motivation}
\theoremstyle{plain}
\newtheorem{case}{Case}
\newcommand{\pibar}{\overline{\pi}}
\begin{document}
\pagestyle{plain}

\title{Three Examples of Quasisymmetric Compatible $\mathfrak{S}_n$-modules}
\date{\today}
\author{Angela Hicks, Samantha Miller-Brown}

\begin{abstract}
    The Schur functions, a basis for the symmetric polynomials (Sym), encode the irreducible representations of the symmetric group $\mathfrak{S}_n$ via the Frobenius characteristic map. In 1996, Krob and Thibon defined a quasisymmetric Frobenius map on the representations of $\mathcal{H}_n(0)$, mapping them to the quasisymmetric functions (QSym). Despite the obvious inclusion of Sym in QSym and the close relationship between $\mathfrak{S}_n$ and $\mathcal{H}_n(0)$, there is no known direct link between these two Frobenius characteristic maps and the related representations. We explore three specific situations in which a deformation of an $\mathfrak{S}_n$ action results in a valid $\mathcal{H}_n(0)$ action and gives a quasisymmetric Frobenius characteristic that is equal to the symmetric Frobenius characteristic. We introduce the concept of quasisymmetric compatibility, which formalizes a link between the two maps, and we show it applies to all $\mathfrak{S}_n$-modules.
\end{abstract}

\maketitle

\section{Introduction}

The Frobenius characteristic map $F_{char}$ a well-studied map on the representations of $\mathfrak{S}_n$, encodes modules as symmetric functions, allowing tools from linear algebra to be used to determine representation theoretic information about these $\mathfrak{S}_n$-modules. Here, we refer to the image of an $\mathfrak{S}_n$-module under this map as the symmetric Frobenius characteristic. 
In particular, if $S^\lambda$ is the irreducible $\mathfrak{S}_n$-module associated to the partition $\lambda$ of $n$, then $$F_{char}(S^{\lambda})=s_{\lambda},\footnote{By convention, the map is usually defined on class functions and sends the character of $S^{\lambda}$ to $s_\lambda$.  We chose to define the map directly on the modules here for consistency with the later maps of Krob and Thibon.}$$ where $s_{\lambda}$ is the Schur function associated to $\lambda$.  

In 1996, Krob and Thibon \cite{nsym} defined two similar maps, namely the quasisymmetric Frobenius characteristic $F_{char}^Q$ and the noncommutative Frobenius characteristic $F_{char}^N$, that encode modules of $\mathcal{H}_n(0)$, a deformation of the group algebra $\mathbb{C}[\mathfrak{S}_n]$, as quasisymmetric functions and noncommutative symmetric functions, respectively.  Since $\mathcal{H}_n(0)$ is not semisimple, indecomposable modules are not always irreducible. $F_{char}^N$, in particular, is defined only on the projective modules of $\mathcal{H}_n(0)$, sending the indecomposable modules $P_{\alpha}$, where $\alpha$ is a (strong) composition of $n$, to the noncommutative ribbon Schur functions:
$$F_{char}^N(P_{\alpha})=R_{\alpha}$$
$F_{char}^Q$ is defined more broadly on all (equivalence classes of) $\mathcal{H}_n(0)$-modules, and if $C_{\alpha}$ is the irreducible $\mathcal{H}_n(0)$-module associated to $\alpha$, then $$F_{char}^Q(C_{\alpha})=F_{\alpha},$$ where $F_{\alpha}$ is Gessel's fundamental quasisymmetric function associated to composition $\alpha$.  

When $M$ is a projective module, $$F_{char}^Q(M)=\chi(F_{char}^N(P_{\alpha})),$$
where $\chi$ is the forgetful map on noncommutative symmetric functions which allows the variables to commute: thus if $M$ is a projective $\mathcal{H}_n(0)$-module, it must have a symmetric image under $F_{char}^Q(M)$.  The converse is not the case, and in particular $F_{char}^Q$ is not injective.

The simple and projective indecomposible modules of $\mathcal{H}_n(0)$ were first classified by Norton~\cite{norton}. In 2016, Huang \cite{huang} introduced a tableau-based definition of the indecomposable module $P_{\alpha}$. See Figure \ref{indecomposable example} for one such example. A reader familiar with Specht modules may quickly notice the similarities to the Specht module associated to the ribbon Schur function $s_{\alpha}$, which we denote by $S^{\alpha}$. In particular, the underlying vector space of $S^{\alpha}$ is isomorphic to $P_{\alpha}$. While the symmetric action on the polytabloids of ribbon shape $\alpha$ is not identical to the $\mathcal{H}_n(0)$-action on the ribbon tableaux, there are clear similarities. 

\begin{figure}[h]
    \centering
\begin{tikzpicture}
    \ytableausetup{smalltableaux}
    \node (label) at (-4,2) {$\mathbf{P_{121}}$};
  \node (1423) at (0,2) {${\ytableaushort{2,14,\none3}}$};
  \node (1423 diamond) at (0,0) {${\ytableaushort{3,14,\none2}}$};
  \node (1324) at (2,-2) {${\ytableaushort{4,13,\none2}}$};
  \node (2413) at (-2,-2) {${\ytableaushort{3,24,\none1}}$};
  \node (bottom diamond) at (0,-4) {${\ytableaushort{4,23,\none1}}$};
  \node[scale=0.75] (bottom zero) at (1,-4) {0};
  \draw (1423) edge[->] node[left] {$\overline{\pi}_2$} (1423 diamond);
  \draw (1423 diamond) edge[->] node[right] {$\overline{\pi}_3$} (1324);
  \draw (1423 diamond) edge[->] node[left] {$\overline{\pi}_1$} (2413);
  \draw (2413) edge[->] node[left] {$\overline{\pi}_3$} (bottom diamond);
  \draw (1324) edge[->] node[right] {$\overline{\pi}_1$} (bottom diamond);
  \draw[scale=0.5] (bottom diamond) edge[->] node[scale=0.5,below] {$\overline{\pi}_2$} (bottom zero);
  \path (2413) edge[loop left] node[scale=0.75] (2413) {$\overline{\pi}_1=-1$} (2413);
  \path (1324) edge[loop right] node[scale=0.75] (1324) {$\overline{\pi}_3=-1$} (1324);
  \path (1423 diamond) edge[loop left] node[scale=0.75] (1423 diamond) {$\overline{\pi}_2=-1$} (1423 diamond);
  \path (bottom diamond) edge[loop left] node[scale=0.75] (bottom diamond) {$\overline{\pi}_1,\overline{\pi}_3=-1$} (bottom diamond);
  \path (1423) edge[loop right] node[scale=0.75] (1423) {$\overline{\pi}_1,\overline{\pi}_3=-1$} (1423);
\draw  (-5,3) rectangle (5,-5);
\end{tikzpicture} 
\caption{A Visual Representation of $P_{121}$}\label{indecomposable example}
\end{figure}

Since $F_{char}^N(P_{\alpha})=R_{\alpha}$ and $\chi(R_{\alpha})=s_{\alpha}$, $$F_{char}(S^{\alpha})=s_{\alpha}=\chi(F_{char}^N(P_{\alpha}))=F_{char}^Q(P_{\alpha}).$$
Similar coincidences have been observed elsewhere. For example, Huang and Rhoades in \cite{huang2018ordered} expanded on the quasisymmetric Frobenius image of a $\mathcal{H}_n(0)$-deformation of the generalized ring of coinvariants, showing that the result is the same symmetric polynomial as the symmetric Frobenius characteristic of the $\mathfrak{S}_n$-module.  While their construction of the $\mathcal{H}_n(0)$-module is specific to a quotient space of $\mathbb{C}[x_1, \dots, x_n]$ and not generalizable to all $\mathcal{H}_n(0)$-modules, these coincidences suggest one should think more generally about deformations of $\mathfrak{S}_n$-modules which result in the same quasisymmetric Frobenius image as the classical Frobenius image.

In particular, since the space of symmetric functions Sym is a subspace of the space of quasisymmetric functions QSym and $\mathcal{H}_n(0)$ is a deformation of $\mathfrak{S}_n$, a natural question is:

\begin{mquestion}
    What map $f$ causes the following diagram to commute, where $\iota$ denotes inclusion?

\begin{center}
\begin{tikzcd}
    \text{Rep'ns of $\mathfrak{S}_n$\footnotemark} \arrow[r, "F_{char}"] \arrow[d, dashed, "f"] & \text{Sym} \arrow[d, hookrightarrow, "\iota"] \\
    \text{Rep'ns of $\mathcal{H}_n(0)$} \arrow[r, "F_{char}^Q"]             & \text{QSym}
\end{tikzcd}
\footnotetext{Formally, $f$ should be a map from the Grothendieck group of $\mathbb{C}[\mathfrak{S}_n]$ to the Grothendieck group of $\mathcal{H}_n(0)$, as explained in Section \ref{sec: grothendieck}.}
\end{center}
\end{mquestion}

\noindent This paper provides an answer to this question.  Our work suggests and is inspired by a broader open question, which we do not attempt to answer, but which motivates our approach:
 \begin{motivation} Can one determine the symmetric Frobenius image of an $\mathfrak{S}_n$-module by computing the quasisymmetric Frobenius image of a related $\mathcal{H}_n(0)$-module?\end{motivation}  

In Section \ref{section: bg}, we give necessary background on the representations of $\mathfrak{S}_n$ and the representations of $\mathcal{H}_n(0)$. Next in Sections \ref{section: tabloids} and \ref{section: specht}, we give two $\mathfrak{S}_n$-modules and their corresponding $\mathcal{H}_n(0)$ modules. These two examples suggest a broader implication in Section \ref{section: broad impl}, where every $\mathfrak{S}_n$-module may be deformed in such a way. However, in Section \ref{section: coinv}, we explore our final example, where the implication and subsequent restrictions must be weakened. Finally, we conclude this paper in Section \ref{section: open} by comparing this paper to previous work, leading to two open questions that may be explored in future work.

\section{Background}\label{section: bg}

\subsection{The Symmetric Group $\mathfrak{S}_n$ and its Representations}

While we give a brief review in this section of relevant facts and notation regarding $\mathfrak{S}_n$ and its representations, we assume the reader has some familiarity with these 1423ics. The less familiar reader who is motivated to learn more is recommended to consult \cite{sagan} and \cite{stanley}.

Recall that the symmetric group $\mathfrak{S}_n$ is generated by the simple transpositions $s_i$, where 
\begin{align*}
    s_i^2 &=1 \text{ for } i\leq n-1 \\
    s_is_j&=s_js_i \text{ for } |i-j|\geq 2 \\
    s_is_{i+1}s_i&=s_{i+1}s_is_{i+1}.
\end{align*}
Given a permutation $w=w_1w_2\cdots w_n\in\mathfrak{S}_n$, we define the following statistics: let the \textit{descent set of $w$} be $$\text{des}(w)=\{i \mid w_i>w_{i+1}\},$$ the \textit{major index of $w$} to be $$\text{maj}(w)=\sum\limits_{i\in\text{des}(w)} i,$$ and the \textit{i-descent set of $w$} to be $$\text{ides}(w)=\{i\mid i \text{ occurs after } i+1 \text{ in } w\}.$$

Now, for $w=w_1w_2\cdots w_n$, let $w^{-1}$ represent the \textit{inverse} of the permutation $w$ and $\text{rev}(w)$ be the \textit{reverse} of $w$ such that $\text{rev}(w)=w_nw_{n-1}\cdots w_1$. With this, we offer an alternative definition of the i-descent set:
$$\text{ides}=\{i\mid w^{-1}_i>w^{-1}_{i+1}\}$$

Next, we introduce several basic notions in the representation theory of a finite group $G$. Recall that a well-defined action of $G$ on a vector space $M$ is a group homomorphism from $G$ to $GL(M)$. This group homomorphism is called a representation of $G$, and we say that $M$ is a $G$-module. Given a $G$-module $M$, we define the character of an element $g\in G$, written $\chi(g)$, by the trace of the linear transformation $M\rightarrow M$ defined by $v\mapsto gv$. The character $\chi$ is constant on conjugacy classes; in the case of $G=\mathfrak{S}_n$, the conjugacy classes are determined by the cycle types of permutations, which is always a partition of $n$. Thus, $\chi_{\mu}=\chi(\sigma)$, where $\sigma$ is any permutation of cycle type $\mu$. 

A submodule $A\subseteq M$ is a subspace of $M$ that is closed under the action of $G$. An \textit{irreducible submodule} is one where it has no non-trivial submodules. In \cite{sagan}, it is shown that any $\mathfrak{S}_n$-module can be written as a direct sum of its irreducible submodules, that is, the group algebra, $\mathbb{C}[\mathfrak{S}_n]$, is semisimple. The irreducible submodules of $\mathfrak{S}_n$ are usually referred to as the Specht modules $S^{\lambda}$.

\subsection{Combinatorial Objects}

In the representation theory of the symmetric group $\mathfrak{S}_n$, combinatorial objects like tableaux play an important role. 

Recall that a \textit{partition} $\mu=(\mu_1, \mu_2, \dots, \mu_k)$ of $n$, written $\mu\vdash n$, is a weakly decreasing sequence of positive integers that sum to $n$ and that $l(\mu)=k$ is the \textit{length of} $\mu$. Similarly, recall a \textit{weak composition} $\alpha=(\alpha_1, \alpha_2, \dots, \alpha_k)$ of $n$, written $\alpha\vDash_w n$, is a sequence of nonnegative integers that sum to $n$. In the case where all integers are strictly positive, a \textit{strong composition} is denoted by $\alpha\vDash n$. Partitions give rise to Young tableaux, whereby in French notation, we stack $\mu_1$ boxes in the bottom row, $\mu_2$ boxes in the second, etc. Given another partition $\lambda$, we say a Young tableau has content $\lambda$ if we fill the tableaux with $\lambda_1$ 1s, $\lambda_2$ 2s, etc. 

We say that a Young tableau of shape $\mu$ and content $\lambda$ is \textit{semistandard} if the filling is such that the rows are weakly increasing left to right and the columns are strictly increasing bottom to 1423. We denote the space of semistandard Young tableaux of shape $\mu$ and content $\lambda$ by SSYT($\mu, \lambda$), where the number of semistandard Young tableaux of shape $\mu$ and content $\lambda$ is counted by the Kostka numbers $K_{\mu\lambda}$. On the other hand, a Young tableau of shape $\mu$ and content $1^n$ is \textit{standard} if both the rows and the columns are strictly increasing. The space of standard Young tableaux of shape $\mu$ is denoted by SYT($\mu$), where $f^{\lambda}=|\text{SYT}(\mu)|$. 

\begin{example}
The following tableau is a semistandard Young tableau of shape $(3,1)$ and content $(2,1,1)$:
\ytableausetup{smalltableaux}
        $$p={{\ytableaushort{2\none, 113}}}$$ 
On the other hand, the following tableau is a standard Young tableau with the same shape:
$$S={{\ytableaushort{4\none, 123}}}$$
\end{example}

\noindent A useful fact, which we will use later, restricts the relative positions of $i$ and $i+1$:
\begin{fact}\label{fact: posns in std tab}
    Given a standard tableau $T$, $i$ must be directly below, to the left of, strictly southeast of, or strictly northwest of $i+1$.
\end{fact}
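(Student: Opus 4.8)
The plan is to argue directly from the two monotonicity conditions defining a standard tableau in French notation — rows strictly increase from left to right, columns strictly increase from bottom to top — together with the basic shape constraint that each column of a Young diagram is a contiguous run of cells rising from the bottom. I would write the cell of $i$ as $(r,c)$ (row $r$ from the bottom, column $c$ from the left) and the cell of $i+1$ as $(r',c')$, note that these two cells are distinct, and split into cases by comparing $r$ with $r'$ and $c$ with $c'$.

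First the two ``aligned'' cases. If $c=c'$, then since the column strictly increases upward and $i<i+1$, the cell of $i$ lies strictly below that of $i+1$; furthermore they are adjacent, for any cell strictly between them in that column would carry an integer strictly between $i$ and $i+1$, which is impossible — this is the ``directly below'' case. Symmetrically, if $r=r'$ the row condition forces $i$ immediately to the left of $i+1$. So from now on assume $r\neq r'$ and $c\neq c'$.

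There are four remaining configurations, and I would rule out two of them, leaving exactly ``strictly northwest'' ($r>r'$, $c<c'$) and ``strictly southeast'' ($r<r'$, $c>c'$). Suppose $i$ is strictly northeast of $i+1$, i.e. $r>r'$ and $c>c'$, and consider the cell $(r',c)$: it lies in column $c$ at a row no higher than $i$'s cell, so by column-contiguity it belongs to the diagram. Its entry is then strictly less than $i$ (it sits below $i$ in column $c$) yet strictly greater than $i+1$ (it sits to the right of $i+1$ in row $r'$), which is absurd. The strictly-southwest case ($r<r'$, $c<c'$) is handled the same way using the cell $(r,c')$, which lies in column $c'$ at a row no higher than $i+1$'s cell and hence is in the diagram, and whose entry would have to be simultaneously greater than $i$ and less than $i+1$. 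Combining all cases gives the claimed list.

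The whole thing is a short case analysis rather than a substantial argument; the only place demanding any care is keeping the French-notation orientation straight and checking that the auxiliary ``corner'' cell produced in each forbidden configuration actually lies in the Young diagram, which is precisely the point at which the column-contiguity (staircase) property of the shape is used.
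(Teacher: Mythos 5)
Your case analysis is correct and complete: the two aligned cases force adjacency because no integer lies strictly between $i$ and $i+1$, and the two forbidden diagonal configurations are each killed by the auxiliary corner cell, whose membership in the diagram you rightly justify via column-contiguity of the shape. The paper states this as a Fact without proof, and your argument is exactly the standard one a reader would supply, so there is nothing to compare beyond noting that your write-up fills the omitted details correctly.
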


One important statistic on Young tableaux is the \textit{i-descent set}; given a tableau $t$, we say $$\text{ides}(t)=\{i\mid i\text{ is south of } i+1 \text{ in } t\}.$$ For the readers familiar with reading words, note that the definition of the i-descent set of a tableau is precisely the i-descent set of the reading word of $t$. Using this definition of the i-descent set, we define the \textit{major index of $t$}: $$\text{maj}(t)=\sum\limits_{i\in\text{ides}(t)} i.$$

Furthermore, we can define a partial ordering on Young tableaux with content $1^n$. In fact, it can be shown that this ordering is a total ordering on standard Young tableaux. 

\begin{definition}\label{def: col dom tableaux}
    Given two standard tableaux $S$ and $T$, we say that $S\prec T$ if and only if the smallest letter that lies in a different position is further north in $T$ than $S$.
\end{definition}
\noindent We observe that in standard tableaux $S\prec T$, the smallest letter must be strictly further north and weakly further west in $T$ than in $S$. For this reason, we use northwest to emphasize that we are comparing standard tableaux.

\begin{example}
Given the following two tableaux of shape $(3,2,1)$, $$\ytableausetup{smalltableaux} S={{\ytableaushort{4, 35,126}}} \text{ and } T={{\ytableaushort{6,24,135}}},$$ we have $S\prec T,$ since 2 is further to the northwest in $T$ than in $S$. 
\end{example}

We define a natural action of $\mathfrak{S}_n$ on tableaux by $s_it=s$, where $s$ is the tableau that results from swapping the letters $i$ and $i+1$ in the filling of $t$. Note that the $s_i$ action on a standard tableau $T$ does not necessarily guarantee that $S$ is a standard tableau.

\begin{lemma}\label{lemma: si col dom on std tab}
    If $T$ and $s_iT$ are standard tableaux, then $T\prec s_iT$ if and only if $i+1$ is strictly northwest of $i$ in $T$.  
\end{lemma}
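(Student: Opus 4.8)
The plan is to unwind Definition \ref{def: col dom tableaux} directly, comparing $T$ with $s_iT$ by locating the smallest letter that sits in a different position in the two tableaux. Since $s_i$ only swaps the entries $i$ and $i+1$, every letter $k < i$ occupies the same cell in $T$ and $s_iT$, and the entry $i$ of $T$ is the smallest letter whose position changes: in $T$ the cell of $i$ contains $i$, while in $s_iT$ that same cell contains $i+1$. So the "smallest letter that lies in a different position" — in the sense of Definition \ref{def: col dom tableaux}, which compares where a given \emph{value} sits — is $i$, and one compares the cell of $i$ in $T$ (call it $c$) with the cell of $i$ in $s_iT$, which is the cell $c'$ originally occupied by $i+1$ in $T$.

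First I would make precise that the relevant comparison is exactly: $T \vartriangleleft s_iT$ iff the cell $c'$ (position of $i+1$ in $T$) is weakly northwest of the cell $c$ (position of $i$ in $T$). Then I would invoke Fact \ref{fact: posns in std tab}, which, for the standard tableau $T$, restricts the relative position of $i$ and $i+1$ to exactly four cases: $i$ directly below $i+1$, $i$ directly left of $i+1$, $i$ strictly southeast of $i+1$, or $i$ strictly northwest of $i+1$. In the first three cases $i+1$ is weakly northwest of $i$ (directly above, directly right — wait, "directly left of $i+1$" means $i+1$ is directly right of $i$, which is \emph{not} northwest; I need to be careful here) — so I would instead organize the casework around which of the four configurations makes $c'$ weakly northwest of $c$. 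Actually the cleaner route: $i+1$ strictly northwest of $i$ in $T$ is precisely the fourth case of Fact \ref{fact: posns in std tab}, and I would argue that in the other three cases $s_iT$ fails to be standard \emph{or} $c'$ is not weakly northwest of $c$; but since the hypothesis assumes $s_iT$ \emph{is} standard, I should check which of the four cases are even compatible with that hypothesis.

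So the key steps, in order: (1) observe $s_i$ fixes all cells of letters $< i$, so the decisive letter in Definition \ref{def: col dom tableaux} is $i$, and reduce the claim to "$c'$ weakly NW of $c$"; (2) enumerate via Fact \ref{fact: posns in std tab} the four possible relative positions of $i$ and $i+1$ in $T$; (3) in each case determine whether $s_iT$ is standard and, if so, whether $c'$ is weakly northwest of $c$ — showing that "$c'$ weakly NW of $c$" holds exactly when $i+1$ is strictly northwest of $i$, i.e. the fourth case; (4) conclude. For step (3) I expect: if $i$ is directly below $i+1$ (so $i+1$ is directly above, i.e. the same column, higher row — that \emph{is} weakly northwest, giving $T \vartriangleleft s_iT$??) — this is the subtle point and the main obstacle: pinning down the convention for "weakly northwest" in French notation and checking that the cases where $i,i+1$ are adjacent (directly below / directly left) force $s_iT$ to be non-standard, so they are excluded by hypothesis, leaving only the two "strictly" cases, in which $c'$ weakly NW of $c$ reduces cleanly to "$i+1$ strictly NW of $i$." The bulk of the argument is this four-case check against Fact \ref{fact: posns in std tab}; everything else is bookkeeping about which letter Definition \ref{def: col dom tableaux} selects.
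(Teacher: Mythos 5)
Your plan is correct and matches the paper's proof: both arguments identify $i$ as the smallest displaced letter, use Fact \ref{fact: posns in std tab} to discard the two adjacent configurations (where $s_iT$ fails to be standard), and then read off the ordering from the remaining strictly-northwest/strictly-southeast dichotomy. The convention worry you flag resolves exactly as you suspect — the "weakly" in Definition \ref{def: col dom tableaux} only matters in the adjacent cases, which the standardness hypothesis has already excluded.
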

\begin{proof}
    First, given standard tableau $T$, we note that $s_iT$ is standard only if $i$ is strictly northwest or southeast of $i+1$ by Fact \ref{fact: posns in std tab}.

    Now, if $i$ is strictly southeast of $i+1$ in $T$, then $i$ is strictly northwest of $i+1$ in $s_iT$. So, by definition, we have $T\prec s_iT$. 

    On the other hand, if $T\prec s_iT$ and $s_iT$ is standard, we can infer from the definition of dominance ordering that $i$ must be strictly northwest of $i+1$ in $s_iT$. Therefore, $i+1$ must be strictly northwest of $i$ in $T$, as desired.
\end{proof}

We can further define an equivalence relation on Young tableaux with content $1^n$ by saying that $t\sim_r s$ iff each row of $t$ contains the same letters as the corresponding row of $s$. The resulting equivalence classes are \textit{tabloids} of shape $\mu$, written $\{t\}$. Each equivalence class has a tableau that is strictly increasing in all rows, and we refer to that tableau as the tabloid representative. 

\begin{example}
    If $t=\ytableausetup{smalltableaux}{{\begin{ytableau}
        3 & \none & \none \\ 1 & 2 & 4
    \end{ytableau}}}$, then the tabloid $\{t\}=\ytableausetup{boxsize=normal,tabloids,smalltableaux} {{\begin{ytableau}
        3 & \none & \none \\ 1 & 2 & 4
    \end{ytableau}}}$ is the equivalence class:
    \ytableausetup{notabloids,smalltableaux}
    $$\{t\}=\left\{{{\begin{ytableau}
        3 & \none & \none \\ 1 & 2 & 4
    \end{ytableau}}}, {{\begin{ytableau}
        3 & \none & \none \\ 2 & 1 & 4
    \end{ytableau}}},{{\begin{ytableau}
        3 & \none & \none \\ 1 & 4 & 2
    \end{ytableau}}},{{\begin{ytableau}
        3 & \none & \none \\ 2 & 4 & 1
    \end{ytableau}}},{{\begin{ytableau}
        3 & \none & \none \\ 4 & 1 & 2
    \end{ytableau}}},{{\begin{ytableau}
        3 & \none & \none \\ 4 & 2 & 1
    \end{ytableau}}}\right\}$$
\end{example}

\noindent Going forward, we introduce two well-studied $\mathfrak{S}_n$-modules, which are each based on tabloids.

\begin{definition}
    Given a partition $\lambda$, let $M^{\lambda}$ be the vector space generated by tabloids of shape $\lambda$.
\end{definition}

\noindent We will make use of the following bijection on this basis of $M^{\lambda}$:

\begin{fact}\label{fact: tabloid bijection}
    Tabloids of shape $\lambda$ are in bijection with the set of words of type $\text{rev}(\lambda)$ $W(\text{rev}(\lambda))$. The straightforward bijection is given as $\{t\}\mapsto w$, where $w=w_1w_2\cdots w_n$ and $w_i=\text{l}(\lambda)-r_i+1$, where $r_i$ is the row of $\{t\}$ in which the letter $i$ sits.
\end{fact}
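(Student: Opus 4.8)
The plan is to unwind the definitions: a tabloid of shape $\lambda$ records nothing more than which row each of the letters $1,\dots,n$ occupies, and the proposed map simply reads off that data after the harmless relabeling of rows given by $[w]_i=\text{l}(\lambda)-r_i+1$. I would prove the stated correspondence is a bijection of sets by checking, in turn, that it is well defined, injective, and surjective.

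First I would check that the map is well defined. If $t\sim_r t'$, then row $j$ of $t$ and row $j$ of $t'$ contain the same set of letters, so the row $r_i$ containing a given letter $i$ depends only on the tabloid $\{t\}$ and not on the chosen representative. Since $\lambda$ has $\text{l}(\lambda)$ rows, $1\le r_i\le \text{l}(\lambda)$, hence $1\le [w]_i\le \text{l}(\lambda)$, so $w$ is a word of length $n$. To see $w\in W(\text{rev}(\lambda))$, note that row $j$ has exactly $\lambda_j$ boxes and the entries $1,\dots,n$ occupy one box each, so exactly $\lambda_j$ of the letters satisfy $r_i=j$; equivalently, the value $k=\text{l}(\lambda)-j+1$ appears exactly $\lambda_j=\lambda_{\text{l}(\lambda)-k+1}=[\text{rev}(\lambda)]_k$ times in $w$. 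Thus $w$ has type $\text{rev}(\lambda)$.

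Next I would verify injectivity: if $\{t\}$ and $\{t'\}$ have the same image $w$, then for every letter $i$ we have $r_i=r_i'$, so $i$ lies in the same row of both tabloids, whence the rows of $\{t\}$ and $\{t'\}$ agree as sets of letters and $\{t\}=\{t'\}$. For surjectivity, given $w\in W(\text{rev}(\lambda))$, I would build a filling of the shape $\lambda$ by placing the letter $i$ in row $\text{l}(\lambda)-[w]_i+1$; by the content computation above, row $j$ receives exactly $\lambda_j$ letters, so (listing the letters of each row in increasing order, to get the tabloid representative) this yields a genuine tableau $t$ of shape $\lambda$ whose tabloid maps back to $w$.

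The argument is essentially bookkeeping, so I do not expect a genuine obstacle; the only place requiring a moment's care is the passage from row indices — in French notation the bottom row is row $1$ and has $\lambda_1$ boxes — to letter values via $[w]_i=\text{l}(\lambda)-r_i+1$, which is exactly what converts the content $\lambda$ into $\text{rev}(\lambda)$ and must be tracked consistently in all three steps.
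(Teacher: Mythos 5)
Your proof is correct and is the straightforward definition-unwinding that the paper implicitly relies on (the paper states this as a Fact with no proof at all); the well-definedness, content, injectivity, and surjectivity checks are all what is needed, and your content computation agrees with the paper's worked example $\{t\}\mapsto 23322313$.
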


\begin{example}
    Consider the tabloid $\{t\}=\ytableausetup{boxsize=normal,tabloids,smalltableaux} \begin{ytableau}
        7 & \none & \none \\ 1 & 4 & 5 \\ 2 & 3 & 6 & 8
    \end{ytableau}$, then $w_t=23322313$.
\end{example}

Now, we give some statistics on tabloids, as well as some related results. Given a tabloid $\{t\}\in M^{\lambda}$, let $w_t$ be the word of content $\lambda$ defined by the bijection in Fact \ref{fact: tabloid bijection}. We give an analogous definition of i-descent sets on tabloids.

\begin{definition}\label{def: des of tabloid}
    For a tabloid $\{t\}\in M^{\lambda}$, define the i-descent set of $\{t\}$ as $$\text{ides}(\{t\})=\{i\mid i \text{ is lower than } i+1 \text{ in } t\}$$
\end{definition}

\noindent Our bijection on tabloids was chosen to have the following property:

\begin{lemma}\label{lem: bij tab des}
    By construction, for $\{t\}\in M^{\lambda}$ and corresponding $w_t\in W(\text{rev}(\lambda))$, we have $$\text{ides}(\{t\})=\text{des}(w_t).$$
\end{lemma}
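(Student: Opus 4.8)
The plan is to unwind both definitions and check that the bijection of Fact~\ref{fact: tabloid bijection} was rigged precisely to make them agree. Recall that for $\{t\}\in M^{\lambda}$, the associated word $w_t=[w_t]_1\cdots[w_t]_n$ is given by $[w_t]_i = \mathrm{l}(\lambda)-r_i+1$, where $r_i$ is the row (counting from the bottom in French notation) containing the letter $i$ in the tabloid representative. First I would observe that this formula reverses the row order: the bottom row ($r=\mathrm{l}(\lambda)$) gets the smallest value $1$, and the top row ($r=1$) gets the largest value $\mathrm{l}(\lambda)$. In particular, the map $r\mapsto \mathrm{l}(\lambda)-r+1$ is strictly \emph{order-reversing} on row indices.

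Next I would fix $i\in\{1,\dots,n-1\}$ and compare membership in the two sets. By Definition~\ref{def: des of tabloid}, $i\in\mathrm{ides}(\{t\})$ iff $i$ sits in a strictly lower row than $i+1$ in $t$, i.e. $r_i > r_{i+1}$ (since ``lower'' in French notation means a smaller row index from the top, but ``lower'' physically means a larger row index when rows are counted from the bottom --- I would pin down the convention carefully here, matching the convention used implicitly in Fact~\ref{fact: tabloid bijection} so that the example $\{t\}$ with $w_t = 23322313$ comes out right). On the other hand, $i\in\mathrm{des}(w_t)$ iff $[w_t]_i > [w_t]_{i+1}$, i.e. $\mathrm{l}(\lambda)-r_i+1 > \mathrm{l}(\lambda)-r_{i+1}+1$, which simplifies to $r_{i+1} > r_i$. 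So I need the two conditions ``$i$ is lower than $i+1$'' and ``$r_{i+1}>r_i$'' to be the same statement; this is exactly the assertion that the row-to-value map reverses order, so that $i$ being physically below $i+1$ corresponds to $i+1$ having the larger row index from the top, hence the larger word value.

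The only real care-point --- and the step I expect to be the main (minor) obstacle --- is bookkeeping the two competing ``up/down'' conventions: French notation draws the first part at the bottom, while ``row $r_i$'' as used in Fact~\ref{fact: tabloid bijection} and the phrase ``lower than'' in Definition~\ref{def: des of tabloid} must be read consistently. I would resolve this once and for all by checking the running example: in $\{t\}$ with rows (top to bottom) $\{7\},\{1,4,5\},\{2,3,6,8\}$, letter $2$ is in the bottom row and $3$ is also in the bottom row, so $2$ is \emph{not} lower than $3$, and indeed $[w_t]_2=3=[w_t]_3$, no descent at $2$; letter $1$ is in the middle row and $2$ is in the bottom (physically lower) row, so $1$ is not lower than $2$, and $[w_t]_1 = 2 < 3 = [w_t]_2$, no descent at $1$; letter $5$ (middle row) versus $6$ (bottom row): $5$ is not lower, and $[w_t]_5 = 2 < 3 = [w_t]_6$; letter $6$ (bottom) versus $7$ (top): $6$ is lower than $7$, and $[w_t]_6 = 3 > 1 = [w_t]_7$, a descent at $6$. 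Once the convention is fixed to match these, the equivalence $i\in\mathrm{ides}(\{t\}) \iff i\in\mathrm{des}(w_t)$ is immediate from the order-reversing nature of $r\mapsto \mathrm{l}(\lambda)-r+1$, and the proof is complete.
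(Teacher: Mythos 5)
Your proposal is correct and is exactly the verification the paper leaves implicit: the paper offers no proof beyond the phrase ``by construction,'' and your unwinding of Fact~\ref{fact: tabloid bijection} together with the order-reversing map $r\mapsto \mathrm{l}(\lambda)-r+1$ is the intended argument. The only blemish is the momentary parenthetical ``i.e.\ $r_i>r_{i+1}$,'' which has the inequality backwards under the bottom-up row numbering you ultimately (and correctly) fix via the running example; since you flag the convention issue and your final equivalence $i\in\mathrm{ides}(\{t\})\iff r_{i+1}>r_i\iff i\in\mathrm{des}(w_t)$ is right, this is not a gap.
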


We also define an analogous $s_i$ action on tabloids, where we swap the letters $i$ and $i+1$. Under this action, $M^{\lambda}$ is an $\mathfrak{S}_n$-module.

Finally, we make explicit a partial ordering on $M^{\lambda}$, by extending $\prec$ in Definition \ref{def: col dom tableaux}.

\begin{definition}
    Given tabloids $\{s\}$ and $\{t\}$ of shape $\lambda$, we say $\{s\}\prec\{t\}$  if the smallest letter in a different position is further north in $\{t\}$ than in $\{s\}$.
\end{definition}
\noindent For convenience, we often want a total ordering, rather than a partial ordering. Thus, extend this partial ordering to a total ordering $\prec_c$ on $M^{\lambda}$ arbitrarily.

Next, we consider a submodule of $M^{\lambda}$. In particular, the Specht module $S^{\lambda}$ is defined using a signed sum of tabloids. Given a tabloid representative $t$, we define a signed sum of tabloids based on the \textit{column space of} $t$, $\text{col}(t)$. The column space of $t$ is the cross product of sets of permutations that permute the letters within each column of $t$. Define a polytabloid, $e_t$, by a signed sum of tabloids:
$$e_t=\sum\limits_{\sigma\in\text{col}(t)} \text{sgn}(\sigma)\{\sigma t\}$$

$S^\lambda$, the vector space spanned by polytabloids of a given shape $\lambda$, has basis $\{e_T\}_{T\in\text{SYT}(\lambda)}$. 

We can define an $s_i$ action on polytabloids to ensure that we have an $\mathfrak{S}_n$-module. If we define this action on the standard polytabloids, we would say that $$s_ie_T=e_{s_iT}.$$ However, recall that $s_iT$ is not necessarily a standard tableau. Thus, we employ a straightening algorithm through the use of Garnir elements (see \cite{sagan} for a full description) to write the $s_i$ action in terms of the basis of $S^{\lambda}$. In particular, we see that $s_i(e_T)$ may be written as a linear combination of larger standard polytabloids with coefficients $c_{T,S}^i\in\mathbb{C}$:
 $$s_i(e_T)=\begin{cases}
-e_T & i, i+1 \hspace{2mm}\text{in the same column} \\
e_T\pm \sum\limits_{T\prec S} c_{T,S}^ie_{S} & i,i+1 \hspace{2mm}\text{in the same row}\\
e_{s_iT} & i, i+1 \text{ in different rows/columns}
\end{cases}$$

\begin{definition}[Specht module]
   For $\lambda\vdash n$, the Specht module $S^{\lambda}$ is the submodule of $M^{\lambda}$ with basis $\{e_T\mid T\in\text{SYT}(\lambda)\}$.
\end{definition}

These Specht modules form the irreducible representations of $\mathfrak{S}_n$. Thus, any $\mathfrak{S}_n$-module $M$ is isomorphic to a direct sum of Specht modules: $$M\cong \bigoplus\limits_{\lambda\vdash n} c_{\lambda}S^{\lambda}$$

On this basis, we can once again define a total ordering that is analogous to the total ordering on standard Young tableaux.

\begin{definition}\label{def: col dom poly}
    Given two polytabloids, we say $e_S\prec e_T$ if and only if $S\prec T$. 
\end{definition}

\subsection{Symmetric Functions}

In this paper, we consider two subspaces of $\mathbb{C}[x_1, \dots, x_n]$. $\mathfrak{S}_n$ acts on $\mathbb{C}[x_1, \dots, x_n]$ by swapping indices such that, for $1<i\leq n$, $$s_if(x_1, \dots, x_i, x_{i+1}, \dots, x_n)=f(x_1, \dots, x_{i+1}, x_i, \dots, x_n).$$

The first subspace is the space of \textit{symmetric functions} $\text{Sym}_n$. A polynomial $f\in\mathbb{C}[x_1, \dots, x_n]$ is considered \textit{symmetric} if, for all $1\leq i< n$, $$s_if(x_1, \dots, x_n)=f(x_1,\dots, x_n).$$ The symmetric functions form a graded subspace of $\mathbb{C}[x_1,\dots, x_n]$, and we list some of the relevant bases here. 

\begin{definition}[Bases of symmetric functions]
        The \textit{power sum}, the \textit{complete homogeneous}, and the \textit{elementary} symmetric functions are defined multiplicatively, so that $p_{\mu}=p_{\mu_1}p_{\mu_2}\cdots p_{\mu_l}$, $h_{\mu}=h_{\mu_1}h_{\mu_2}\cdots h_{\mu_l}$, and $e_{\mu}=e_{\mu_1}e_{\mu_2}\cdots e_{\mu_l}$, where $$ p_{k}(x_1, \dots, x_n)=\sum\limits_{i=1}^n x_i^k,$$ $$ h_{k}(x_1, \dots, x_n)=\sum\limits_{1\leq i_1\leq\cdots\leq i_k\leq n} x_{i_1}\cdots x_{i_k}, $$ and $$e_k(x_1, \dots, x_n)=\sum\limits_{1< i_1<\cdots< i_k\leq n} x_{i_1}\cdots x_{i_k}.$$
\end{definition}

\begin{definition}[Schur functions]
    The \textit{Schur functions} are defined using semi-standard Young tableaux: $$s_{\lambda}=\sum\limits_{T\in\text{SSYT}(\lambda)} x^T$$
\end{definition}

We will need the relationships between some of these bases in this paper. For relevant proofs, see \cite{stanley}.

\begin{lemma}\label{lem: h in terms of schur}
    $$h_{\lambda}=\sum\limits_{\mu\vdash n} K_{\mu\lambda}s_{\mu}.$$
\end{lemma}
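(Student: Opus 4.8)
The plan is to read both sides of the identity as generating functions for semistandard tableaux and to match coefficients using the (classical) Pieri rule, so that the only ingredient beyond the definitions already given in this section is Pieri, which we would cite from \cite{stanley}. Recall that the Pieri rule states $s_\mu\cdot h_k=\sum_\nu s_\nu$, the sum being over all partitions $\nu$ obtained from $\mu$ by adding a horizontal strip of $k$ cells (i.e.\ a set of $k$ cells, no two in the same column). Since $h_{(k)}=s_{(k)}$ and $h_\lambda=h_{\lambda_1}h_{\lambda_2}\cdots h_{\lambda_\ell}$ by definition, we may expand $h_\lambda$ by applying the Pieri rule $\ell$ times.

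First I would record that, after fully expanding, the coefficient of $s_\mu$ in $h_\lambda$ equals the number of chains of partitions
\[
\emptyset=\mu^{(0)}\subseteq\mu^{(1)}\subseteq\cdots\subseteq\mu^{(\ell)}=\mu
\]
in which each skew shape $\mu^{(i)}/\mu^{(i-1)}$ is a horizontal strip with $\lambda_i$ cells. Next I would exhibit the standard bijection between such chains and $\mathrm{SSYT}(\mu,\lambda)$: given a chain, fill the cells of $\mu^{(i)}/\mu^{(i-1)}$ with the letter $i$. Since the cells carrying labels $\le i$ form the partition shape $\mu^{(i)}$, in every row the labels weakly increase from left to right; and the horizontal-strip condition says exactly that no column repeats a label, so within each column the labels strictly increase (bottom to top in French notation), because a cell added at a later step necessarily lies in a higher row of its column. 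Conversely, from any $T\in\mathrm{SSYT}(\mu,\lambda)$ the shapes formed by the cells with labels $\le i$ give such a chain. Hence the coefficient of $s_\mu$ in $h_\lambda$ is $|\mathrm{SSYT}(\mu,\lambda)|=K_{\mu\lambda}$, which is the claim.

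The one genuinely nontrivial input is the Pieri rule itself; everything else is bookkeeping against the definition of SSYT given above, and the unitriangularity (the sum is really over $\mu\trianglerighteq\lambda$, with $K_{\lambda\lambda}=1$) falls out for free. As an alternative that stays closer to the representation theory of this section, one could instead use that $h_\lambda$ is the symmetric Frobenius characteristic of the tabloid module $M^\lambda$ and invoke Young's rule, $M^\lambda\cong\bigoplus_\mu K_{\mu\lambda}S^\mu$, together with $F_{char}(S^\mu)=s_\mu$; in that route Young's rule is the main obstacle, and it is itself normally proved by the same horizontal-strip combinatorics.
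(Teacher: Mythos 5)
Your argument is correct and complete. Note, though, that the paper does not prove this lemma at all: it is stated as a standard fact with the proof deferred to \cite{stanley}, so there is no in-paper argument to compare against. Your route --- iterating the Pieri rule to express the coefficient of $s_\mu$ in $h_\lambda$ as a count of chains $\emptyset=\mu^{(0)}\subseteq\cdots\subseteq\mu^{(\ell)}=\mu$ with each $\mu^{(i)}/\mu^{(i-1)}$ a horizontal strip of size $\lambda_i$, and then biject such chains with $\mathrm{SSYT}(\mu,\lambda)$ by labelling the $i$-th strip with the letter $i$ --- is one of the two standard proofs; the bookkeeping (rows weakly increase because the cells labelled $\le i$ form a partition shape, columns strictly increase by the horizontal-strip condition) is all in order. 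The other standard route, which is closer to what \cite{stanley} actually does, is duality under the Hall inner product: from $s_\mu=\sum_\lambda K_{\mu\lambda}m_\lambda$ (which is immediate from the paper's combinatorial definition of $s_\mu$) and $\langle h_\lambda,m_\nu\rangle=\delta_{\lambda\nu}$, one gets $\langle h_\lambda,s_\mu\rangle=K_{\mu\lambda}$ and hence the claimed expansion since the $s_\mu$ are orthonormal. That version trades the Pieri rule for the inner-product formalism; yours is more self-contained if Pieri is taken as known, and your closing remark correctly identifies the representation-theoretic reformulation (Young's rule applied to $M^\lambda$) as equivalent rather than independent.
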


Given any $n$, it turns out that $\text{Sym}_n$ is isomorphic to the vector space of $n$-dimensional representations of $\mathfrak{S}_n$. To see this, we introduce the \textit{Frobenius characteristic map}, which employs the use of the character of a permutation. Going forward, we refer to this as the symmetric Frobenius characteristic image.

\begin{definition}[Frobenius characteristic map]
    The Frobenius characteristic of an $\mathfrak{S}_n$-module $M$ is defined using the character function $\chi:\mathfrak{S}_n\rightarrow\mathbb{C}$, where $\chi_{\mu}$ is the image of a permutation with cycle type $\mu$ under $\chi$:
        $$F_{char}(M)=\sum\limits_{\mu\vdash n} \chi_{\mu}\frac{p_{\mu}}{z_{\mu}}$$
\end{definition}
We note that, if $M$ is graded module, then one may encode the degree of the submodules using a graded version of the Frobenius characteristic, $F_{char}(M;q)$.

Using the above definition, we obtain the following results about the modules defined in the previous section (see \cite{sagan} for relevant proofs):

$$F_{char}(S^{\lambda})=s_{\lambda}$$
$$F_{char}(M^{\lambda})=h_{\lambda}$$

\subsection{Quasisymmetric Functions}

The second subspace of formal power series that we will use often is the the quasisymmetric functions $\text{QSym}_n$. Like Sym, QSym is a sub-Hopf algebra of $\mathbb{C}[x_1, \dots, x_n]$. Quasisymmetric functions are \textit{shift invariant}, so the coefficient of $x_1^{\alpha_1}\cdots x_k^{\alpha_k}$ is equal to the coefficient of $x_{i_1}^{\alpha_1}\cdots x_{i_k}^{\alpha_k}$, where $i_1<i_2<\cdots< i_k$.

\begin{definition}[Gessel's fundamental quasisymmetric functions]
    Define the Gessel's fundamental quasisymmetric function $F_K$ as the following sum:
    $$F_K=\sum\limits_{\substack{1\leq i_1\leq i_2\leq\cdots\leq i_k \\ j\in K\Rightarrow i_j<i_{j+1}}} x_{i_1}x_{i_2}\cdots x_{i_k}$$.
\end{definition}

\begin{example}
$$F_{\{1\}}(x_1, x_2,x_3)=x_{1} x_{2}^{2} + x_{1} x_{2} x_{3} + x_{1} x_{3}^{2} + x_{2} x_{3}^{2}$$
\end{example}

Every symmetric function is shift invariant, so $\text{Sym}_n\subset \text{QSym}_n$. As a result, we are able to write any symmetric functions in terms of the fundamental quasisymmetric functions. 

\begin{fact}\label{fact: schurs and fundamentals}
    We can write the Schur functions as the following sum:
$$s_{\lambda}=\sum\limits_{T\in \text{SYT}(\lambda)} F_{\text{ides}(T)}$$
\end{fact}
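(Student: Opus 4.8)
The plan is to prove the identity starting from the monomial expansion $s_\lambda=\sum_{S\in\text{SSYT}(\lambda)}x^S$, sorting the semistandard tableaux $S$ according to their \emph{standardizations}, and showing that each resulting class contributes exactly one fundamental quasisymmetric function. First I would set up the standardization map $\operatorname{std}\colon\text{SSYT}(\lambda)\to\text{SYT}(\lambda)$: scan the entries of $S$ in increasing order of value, and within a fixed value --- whose cells lie in distinct columns because columns are strictly increasing --- from left to right, handing out the labels $1,2,\dots,n$ in that order. Column-strictness and weak increase along rows make $\operatorname{std}(S)$ a genuine standard tableau, so the map is well defined. Writing $c_1,\dots,c_n$ for the cells of a fixed $T\in\text{SYT}(\lambda)$ in label order, I would then consider the two maps $S\mapsto(S(c_1),\dots,S(c_n))$ out of the fiber $\operatorname{std}^{-1}(T)$, and $(i_1,\dots,i_n)\mapsto(\text{the filling }S\text{ with }S(c_j)=i_j)$ into it.

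The crux is to show these maps are mutually inverse bijections between $\operatorname{std}^{-1}(T)$ and the set of sequences $1\le i_1\le\cdots\le i_n$ with $i_j<i_{j+1}$ for every $j\in\text{ides}(T)$. Two small geometric facts about SSYT drive this. (a) If two cells of an SSYT carry the same value, the one in the smaller column lies weakly \emph{above} the other: otherwise, with the left cell strictly lower, the cell in the left cell's row directly below the right cell would have to be simultaneously $\ge$ and $<$ their common value. Hence if $j\in\text{ides}(T)$ --- so $c_j$ is strictly below $c_{j+1}$ --- and $S(c_j)=S(c_{j+1})$, standardization (which puts the smaller label on the cell in the smaller column) would make $c_j$ the left, hence weakly higher, cell, a contradiction; so $S(c_j)<S(c_{j+1})$, giving the forward implication. (b) If $(r,c)$ and $(r{+}1,c)$ are cells of $T$ with labels $a<b$, then some $m\in\{a,\dots,b-1\}$ lies in $\text{ides}(T)$, since otherwise the rows of the cells labeled $a,a{+}1,\dots,b$ would be weakly decreasing, impossible as $a$ sits strictly below $b$; this forces $i_a\le i_m<i_{m+1}\le i_b$, so the filling built from a compatible sequence has strictly increasing columns (rows are automatically weakly increasing), i.e.\ is semistandard. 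Checking $\operatorname{std}(S)=T$ for that filling reduces, via Fact~\ref{fact: posns in std tab}, to the observation that when $S(c_j)=S(c_{j+1})$ we have $j\notin\text{ides}(T)$, leaving only the positions ``$j$ directly left of $j+1$'' or ``$j$ strictly northwest of $j+1$,'' both of which place $c_j$ strictly west of $c_{j+1}$ as the standardization tie-break demands.

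With the fiber description in hand the identity follows by summing: if $(i_1,\dots,i_n)$ is the sequence attached to $S$ then $x^S=x_{i_1}\cdots x_{i_n}$, so $\sum_{S\in\operatorname{std}^{-1}(T)}x^S$ is precisely the defining sum of $F_{\text{ides}(T)}$, whence
\[
s_\lambda=\sum_{S\in\text{SSYT}(\lambda)}x^S=\sum_{T\in\text{SYT}(\lambda)}\sum_{S\in\operatorname{std}^{-1}(T)}x^S=\sum_{T\in\text{SYT}(\lambda)}F_{\text{ides}(T)}.
\]
I expect essentially all of the difficulty to sit in the two geometric facts (a) and (b) --- the strict jump across a descent and the existence of a descent ``straddling'' a vertically adjacent pair --- with the rest being bookkeeping; repeated appeals to Fact~\ref{fact: posns in std tab} should keep those case checks short.
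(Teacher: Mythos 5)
The paper states this as a known fact and gives no proof of its own (it is Gessel's classical expansion of $s_\lambda$ into fundamentals, for which the paper defers to the standard references). Your argument is exactly the textbook proof via standardization of semistandard tableaux, and it is correct: facts (a) and (b) are the right two lemmas, the fiber of $\operatorname{std}$ over $T$ is correctly identified with the $\text{ides}(T)$-compatible weakly increasing sequences (the check that equal consecutive values force $c_j$ strictly west of $c_{j+1}$ extends to non-consecutive labels by transitivity, which is all that is needed), and the summation step is immediate.
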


\subsection{The 0-Hecke Algebra $\mathcal{H}_n(0)$ and its Representations}

The 0-Hecke algebra is a deformation of $\mathfrak{S}_n$ with generators $\pibar_i$ that satisfy the following relations.
\begin{align*}
    \overline{\pi}_i^2 &=-\overline{\pi}_i \text{ for } i\leq n-1 \\
    \overline{\pi}_i\overline{\pi}_j&=\overline{\pi}_j\overline{\pi}_i \text{ for } |i-j|\geq 2 \\
    \overline{\pi}_i\overline{\pi}_{i+1}\overline{\pi}_i&=\overline{\pi}_{i+1}\overline{\pi}_i\overline{\pi}_{i+1}
\end{align*}
Given $w\in\mathfrak{S}_n$ written as a product of simple transpositions, we can define $\pibar_w$ multiplicatively.

The irreducible representations of $\mathcal{H}_n(0)$ are one-dimensional submodules naturally indexed by subsets of $[n-1]$. We can define these one-dimensional irreducibles using the actions of the generators $\pibar_i$.

\begin{definition}[Irreducible $\mathcal{H}_n(0)$-modules]
    For $I\subseteq[n-1]$, let $C_{I}$ be the one-dimensional $\mathcal{H}_n(0)$ representation defined by $$C_{I}(\overline{\pi}_i)=\begin{cases}
        0 & i\notin I \\
        -1 & i\in I
    \end{cases}$$
\end{definition}
\noindent In \cite{norton}, it is shown that $\{C_{I} \mid I\subseteq[n-1]\}$ are all of the $2^{n-1}$ irreducible representations of $\mathcal{H}_n(0)$. 

Using this characterization of the irreducible submodules, Krob and Thibon \cite{nsym} define the following characteristic map:

\begin{definition}[Quasisymmetric Frobenius characteristic]
    Given an irreducible $\mathcal{H}_n(0)$-submodule $C_I$, define the quasisymmetric Frobenius characteristic map using fundamental quasisymmetric functions: $$F_{char}^Q(C_I)=F_I$$

    Now, given an arbitrary $\mathcal{H}_n(0)$-module $M$, let $M=M_1\supset M_2\supset\cdots M_k=\emptyset$ be a finite composition series for $M$. Then, since each $\faktor{M_j}{M_{j+1}}$ is irreducible, define $$F_{char}^Q(M)=\sum\limits_{j=1}^{k-1} F_{char}^Q\left(\faktor{M_j}{M_{j+1}}\right).$$
\end{definition}
\noindent By the Jordan-H{\"o}lder Theorem, the collection of isomorphism classes of these irreducible quotients is uniquely determined. Moreover, since each quotient is irreducible, it is isomorphic to $C_I$ for some $I$. Moving forward, we use $[\cdot]_j$ to denote equivalence classes in $\faktor{M_j}{M_{j+1}}$. Then, if $[v]_j$ is the generator of $\faktor{M_j}{M_{j+1}}$, $$F_{char}^Q\left(\faktor{M_j}{M_{j+1}}\right)=F_{\{i\mid \pibar_iv=-v\}}.$$

\subsection{Grothendieck Groups}\label{sec: grothendieck}

Through this point, we have two Frobenius characteristic maps: the symmetric Frobenius map on the representations of $\mathfrak{S}_n$, $F_{char}$, and the quasisymmetric Frobenius map on the representations of $\mathcal{H}_n(0)$, $F_{char}^Q$. In this section, we offer a more formal, and uniform, definition of these two maps.

\begin{definition}[Grothendieck group]
    For an algebra $A$, consider the $\mathbb{Z}$-vector space of isomorphism classes of finite-dimensional $A$-modules $\mathcal{R}(A)$. Let $[M]$ denote the isomorphism class of $M$. Then, the Grothendieck group $\mathcal{G}_0(A)$ is the quotient space of $\mathcal{R}(A)$ obtained by the relation $[M]=[M_1]+[M_2]$ when we have the short exact sequence $0\rightarrow M_1\rightarrow M\rightarrow M_2\rightarrow 0$.
\end{definition}

This simplifies considerably if the algebra is semisimple, as in the case of $\mathbb{C}[\mathfrak{S}_n]$, the group algebra of $\mathfrak{S}_n$. For any algebra, when $0\rightarrow M_1\xrightarrow{f} M\xrightarrow{g} M_2\rightarrow 0$ is a short exact sequence, we have $M_2\cong \faktor{M}{\text{im}(f)}$ with $\text{im}(f)\cong M_1$ a submodule of $M$. If the algebra is semisimple, however, then this short exact sequence always splits so that $M\cong M_1\oplus M_2$. Thus, if the algebra $A$ is semisimple, the relation defining $\mathcal{G}_0(A)$ is simply $[M]=[M_1]+[M_2]$, when $M\cong M_1\oplus M_2$, without the added complexity of short exact sequences.

Since $\mathcal{H}_n(0)$ is not semisimple, a finitely-generated module $M$ cannot necessarily be written as a direct sum of its irreducible submodules. The irreducible representations $C_I$ form the basis of the Grothendieck group of $\mathcal{H}_n(0)$ $\mathcal{G}_0(\mathcal{H}_n(0))$. Using this and the Specht modules $S^{\lambda}$ as the basis for $\mathcal{G}_0(\mathbb{C}[\mathfrak{S}_n])$, one can formalize the definitions of the symmetric Frobenius characteristic map and the quasisymmetric Frobenius characteristic map. In particular, these linear transformations are defined by
\begin{align*}
    F_{char}: \mathcal{G}_0\left(\mathbb{C}[\mathfrak{S}_n]\right) &\rightarrow \text{Sym}^n \\
    [S^{\lambda}] &\mapsto s_{\lambda}
\end{align*} 
and \begin{align*}
    F_{char}^Q:\mathcal{G}_0\left(\mathcal{H}_n(0)\right) &\rightarrow \text{QSym}^n \\
    [C_I] &\mapsto F_I
\end{align*}

Readers wishing for a first concrete example of applying this map are encouraged to consult \cite{tewari}, where Tewari and van Willigenburg give the nice example of a $\mathcal{H}_n(0)$-module whose quasisymmetric Frobenius image is the quasischur basis for the quasisymmetric functions.

\section{An $\mathcal{H}_n(0)$-Module, $\widehat{M^{\lambda}}$}\label{section: tabloids}

Our goal for this section is to deform the $\mathfrak{S}_n$-action on the permutation module $M^{\lambda}$ to create a new $\mathcal{H}_n(0)$-module $\widehat{M^{\lambda}}$ such that $$F_{char}^Q(\widehat{M^{\lambda}})=F_{char}(M^{\lambda}).$$ 

To define a $\mathcal{H}_n(0)$-action on $\widehat{M^{\lambda}}$, we use the vector space $M^{22}$ as a guiding example. Recall the $\mathfrak{S}_4$ action on $M^{22}$ on the left of Figure \ref{fig: M22 H0 action}, drawn such that if $\{s\}$ lies below $\{t\}$, then $\{s\}\prec\{t\}$. Now for a valid $\mathcal{H}_n(0)$-action, we must have $\overline{\pi}_i^2=-\overline{\pi}_i$, and thus all edges must be directed. To this end, we choose one direction in the diagram and add loops when necessary. To ensure that the quasisymmetric Frobenius characteristic is the same as the symmetric Frobenius characteristic, we will direct the $\pibar_i$ edges so that a non-trivial $\pibar_i$ action on a tabloid creates a new i-descent. In particular, consider the right of Figure \ref{fig: M22 H0 action}, where we direct the edges as desired, while adding ``negative" loops to satisfy the first commuting relation. Furthermore, note that all previous $s_i$ actions resulting in $s_iv=v$ cannot be analogously replaced by $\pibar_iv=v$ and must be adjusted. 

\begin{figure}[h]
\centering
\ytableausetup{boxsize=normal,tabloids}
\begin{tikzpicture}[scale=.75]
  \node (1423) at (2,0) {\tiny${\ytableaushort{14,23}}$};
  \node (2413) at (0,-2) {\tiny${\ytableaushort{24,13}}$};
  \node (1324) at (0,2) {\tiny${\ytableaushort{13,24}}$};
  \node (1234) at (0,4) {\tiny${\ytableaushort{12,34}}$};
  \node (3412) at (0,-4) {\tiny${\ytableaushort{34,12}}$};
  \node (2314) at (-2,0) {\tiny${\ytableaushort{23,14}}$};
  \draw (1423) edge[<->] node[right] {$s_1$} (2413);
  \draw (1423) edge[<->] node[right] {$s_3$} (1324);
  \draw (2413) edge[<->] node[left] {$s_2$} (3412);
  \draw (2413) edge[<->] node[right] {$s_3$} (2314);
  \draw (1324) edge[<->] node[right] {$s_2$} (1234);
  \draw (1324) edge[<->] node[left] {$s_1$} (2314);
  \path (2314) edge[loop left] node[scale=0.75] (2314) {$s_2=1$} (2314);
  \path (3412) edge[loop left] node[scale=0.75] (3412) {$s_1=s_3=1$} (3412);
  \path (1234) edge[loop left] node[scale=0.75] (1234) {$s_1=s_3=1$} (1234);
  \path (1423) edge[loop right] node[scale=0.75] (1423) {$s_2=1$} (1423);
\end{tikzpicture}
\begin{tikzpicture}[scale=.75]
    \ytableausetup{boxsize=normal,tabloids}
  \node (1423) at (2,0) {\tiny${\ytableaushort{14,23}}$};
  \node (2413) at (0,-2) {\tiny${\ytableaushort{24,13}}$};
  \node (1324) at (0,2) {\tiny${\ytableaushort{13,24}}$};
  
  \node (1234) at (0,4) {\tiny${\ytableaushort{12,34}}$};
  \node (3412) at (0,-4) {\tiny${\ytableaushort{34,12}}$};
  \node (2314) at (-2,0) {\tiny${\ytableaushort{23,14}}$};
  \node[scale=0.5] (mid zero) at (0,0) {0};
  \node[scale=0.5] (top zero) at (-1.5,4) {0};
  \node[scale=0.5] (3412 zero) at (1.5,-4) {0};
  \draw (1423) edge[->] node[left] {$\overline{\pi}_1$} (2413);
  \draw (1324) edge[->] node[right] {$\overline{\pi}_3$} (1423);
  \draw (2413) edge[->] node[left] {$\overline{\pi}_2$} (3412);
  \draw (2314) edge[->] node[left] {$\overline{\pi}_3$} (2413);
  \draw (1234) edge[->] node[right] {$\overline{\pi}_2$} (1324);
  \draw (1324) edge[->] node[right] {$\overline{\pi}_1$} (2314);
  \draw (1423) edge[->] node[scale=0.5,below] {$\overline{\pi}_2$} (mid zero);
  \draw[scale=0.5] (2314) edge[->] node[scale=0.5,below] {$\overline{\pi}_2$} (mid zero);
  \draw[scale=0.5] (3412) edge[->] node[scale=0.5,below] {$\overline{\pi}_1,\overline{\pi}_3$} (3412 zero);
  \draw[scale=0.5] (1234) edge[->] node[scale=0.5,below] {$\overline{\pi}_1,\overline{\pi}_3$} (top zero);
  \path (2413) edge[loop right] node[scale=0.75] (2413) {$\overline{\pi}_1=\overline{\pi}_3=-1$} (2413);
  \path (1324) edge[loop right] node[scale=0.75] (1324) {$\overline{\pi}_2=-1$} (1324);
  \path (3412) edge[loop left] node[scale=0.75] (3412) {$\overline{\pi}_2=-1$} (3412);
  \path (2314) edge[loop left] node[scale=0.75] (2314) {$\overline{\pi}_1=-1$} (2314);
  \path (1423) edge[loop right] node[scale=0.75] (2314) {$\overline{\pi}_3=-1$} (1423);
\end{tikzpicture}
\caption{An $\mathfrak{S}_4$-action on $M^{22}$ and an $\mathcal{H}_4(0)$-action on $\widehat{M^{22}}$}\label{fig: M22 H0 action}
\end{figure}

With this figure in mind, we define the following $\mathcal{H}_n(0)$-action on the tabloids of $\widehat{M^{\lambda}}$:

\begin{definition}\label{def: easy m action}
$$\overline{\pi}_{i}\{t\} = \begin{cases}
        0 & i, i+1 \text{ in the same row}\\
        -\{t\} & i \text{ south of } i+1 \\
        s_i\{t\} & i \text{ north of } i+1
        \end{cases}$$
\end{definition}

\noindent The next three propositions will show that this action is a valid $\mathcal{H}_n(0)$-action.

\begin{proposition}\label{prop: m squared}
    The action defined on $\widehat{M^{\lambda}}$ satisfies $\overline{\pi}_i^2=-\overline{\pi}_i$.
\end{proposition}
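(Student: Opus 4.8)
The plan is to verify the relation directly on the tabloid basis of $\widehat{M^\lambda}$, splitting into the three cases of Definition \ref{def: easy m action} according to the relative position of $i$ and $i+1$ in a tabloid $\{t\}$. Since a tabloid either has $i$ and $i+1$ in the same row, or in different rows with exactly one strictly north of the other, these three cases are exhaustive and mutually exclusive, so it suffices to check $\overline{\pi}_i^2\{t\} = -\overline{\pi}_i\{t\}$ for each $\{t\}$ and then extend by linearity.

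The first two cases are immediate. If $i$ and $i+1$ lie in the same row of $\{t\}$, then $\overline{\pi}_i\{t\} = 0$ and both sides vanish. If $i$ is strictly south of $i+1$ in $\{t\}$, then $\overline{\pi}_i\{t\} = -\{t\}$, so by linearity $\overline{\pi}_i^2\{t\} = -\overline{\pi}_i\{t\} = \{t\}$, which agrees with $-\overline{\pi}_i\{t\} = \{t\}$.

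The only case needing a small observation is the third. If $i$ is strictly north of $i+1$ in $\{t\}$, then $\overline{\pi}_i\{t\} = s_i\{t\}$, and I would note that $s_i\{t\}$ is obtained by exchanging the letters $i$ and $i+1$, so in $s_i\{t\}$ the letter $i$ now occupies the (strictly lower) row that previously held $i+1$; that is, $i$ is strictly south of $i+1$ in $s_i\{t\}$. Applying the second case of Definition \ref{def: easy m action} then gives $\overline{\pi}_i(s_i\{t\}) = -s_i\{t\}$, hence $\overline{\pi}_i^2\{t\} = -s_i\{t\} = -\overline{\pi}_i\{t\}$, as required. The argument is entirely routine; the one point worth stating carefully is precisely that applying $s_i$ in the ``north'' case always lands in the ``south'' case (never back in the ``same row'' or ``north'' case), which is exactly what forces $\overline{\pi}_i^2 = -\overline{\pi}_i$ rather than $\overline{\pi}_i^2 = \overline{\pi}_i$.
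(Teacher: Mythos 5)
Your proof is correct and follows essentially the same route as the paper's: a case analysis on the relative position of $i$ and $i+1$ in the tabloid, with the key observation that applying $s_i$ in the ``north'' case produces a tabloid in the ``south'' case. The only difference is that you spell out this last point slightly more explicitly than the paper does.
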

\begin{proof}
    We have three cases to consider, based on the relative positions of $i$ and $i+1$ in an arbitrary tabloid $\{t\}$. 

    For the first and shortest case, suppose $i$ and $i+1$ are in the same row in $\{t\}$. Then, $s_i\{t\}=\{t\}$. Thus, $\overline{\pi}_i\{t\}=0$. This is easy to see that $\overline{\pi}_i^2=-\overline{\pi}_i$ trivially.

    For the second case, suppose $i$ is north of $i+1$ in $\{t\}$. Then, $\overline{\pi}_i\{t\}=s_i\{t\}$, by definition of the action. In $s_i\{t\}$, $i$ is now south of $i+1$, so $\overline{\pi}_i^2\{t\}=\overline{\pi}_i(s_i\{t\})=-s_i\{t\}=-\overline{\pi}_i\{t\}$. 

    For the third and final case, suppose $i$ is south of $i+1$. Then, in $s_i\{t\}$, $i$ is north of $i+1$. Thus, by definition, $\overline{\pi}_i\{t\}=-\{t\}$, and $\overline{\pi}_i(-\{t\})=\{t\}=-\overline{\pi}_i\{t\}$. 
\end{proof}

\begin{proposition}\label{prop: m length 2}
    The action defined on $\widehat{M^{\lambda}}$ satisfies $\overline{\pi}_i\overline{\pi}_j=\overline{\pi}_j\overline{\pi}_i$ when $|i-j|\geq 2$. 
\end{proposition}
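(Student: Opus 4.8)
The plan is to exploit the fact that $\overline{\pi}_i$, as an operator on tabloids, depends only on the relative vertical position (same row / south / north) of the two letters $i$ and $i+1$, and that it is built from three ingredients: the swap $s_i$, multiplication by $-1$, and the zero map. When $|i-j|\ge 2$ the four integers $i, i+1, j, j+1$ are pairwise distinct, so the data governing $\overline{\pi}_j$ involves letters disjoint from those governing $\overline{\pi}_i$. First I would record two elementary observations: (1) as operators on tabloids in $M^{\lambda}$, $s_i$ and $s_j$ commute when $|i-j|\ge 2$, since each merely swaps a pair of letters disjoint from the pair swapped by the other (this is the tabloid-level shadow of the corresponding relation in $\mathfrak{S}_n$); and (2) scalar multiplication commutes with every $\overline{\pi}_k$, and $0$ is absorbing. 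The crucial consequence is that applying $\overline{\pi}_j$ to $\{t\}$ — which sends $\{t\}$ to one of $0$, $-\{t\}$, or $s_j\{t\}$ — never moves the letters $i$ or $i+1$ to a different row, so $i$ and $i+1$ occupy the same relative position in $\{t\}$ and in $\overline{\pi}_j\{t\}$; symmetrically for $\overline{\pi}_i$ and the pair $j, j+1$.

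With these in hand, I would do the case analysis on a fixed tabloid $\{t\}$, splitting on which clause of Definition \ref{def: easy m action} governs $\overline{\pi}_i\{t\}$. If $i$ and $i+1$ lie in the same row, then $\overline{\pi}_i\{t\}=0$, so $\overline{\pi}_j\overline{\pi}_i\{t\}=0$; and since $i, i+1$ stay in the same row in $\overline{\pi}_j\{t\}$, also $\overline{\pi}_i\overline{\pi}_j\{t\}=0$. If $i$ is south of $i+1$, then $\overline{\pi}_i\{t\}=-\{t\}$, so $\overline{\pi}_j\overline{\pi}_i\{t\}=-\overline{\pi}_j\{t\}$; on the other side, $i$ remains south of $i+1$ after applying $\overline{\pi}_j$ (in each of the three subcases for $\overline{\pi}_j\{t\}$), so $\overline{\pi}_i\overline{\pi}_j\{t\}=-\overline{\pi}_j\{t\}$ as well, using observation (2). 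If $i$ is north of $i+1$, then $\overline{\pi}_i\{t\}=s_i\{t\}$, and one checks in the three subcases for $\overline{\pi}_j$ that both compositions land on the same element — the only nontrivial subcase being when $\overline{\pi}_j$ acts by $s_j$, where both sides equal $s_is_j\{t\}=s_js_i\{t\}$ by observation (1), while the other subcases reduce to $0$ or to $\pm s_i\{t\}$ on both sides. Since $\{t\}$ was arbitrary and the $\overline{\pi}$'s are extended linearly, this proves $\overline{\pi}_i\overline{\pi}_j=\overline{\pi}_j\overline{\pi}_i$.

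The "hard part" here is not mathematical depth but bookkeeping: one must check that the two compositions agree across all $3\times 3$ configurations of the position-pairs $\{i, i+1\}$ and $\{j, j+1\}$ without drowning in cases. The conceptual shortcut above collapses this, since it reduces everything to the single slogan that $\overline{\pi}_k$ is assembled from $s_k$, a sign, and the zero map, all of which are insensitive to — and commute with — the analogous data for a far-away index; I would present the argument in that compressed form, spelling out only the representative $s_i/s_j$ subcase in detail and dispatching the rest by the two observations.
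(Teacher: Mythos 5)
Your proposal is correct and takes essentially the same route as the paper, which simply observes that the relative positions of $i,i+1$ and of $j,j+1$ do not affect $\overline{\pi}_j$ and $\overline{\pi}_i$ respectively; you have merely spelled out that observation (disjointness of the letter pairs, commutation of $s_i$ with $s_j$, and linearity) together with the resulting case check.
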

\begin{proof}
    It is straightforward to see that the relative positions of $i$ and $i+1$ and the relative positions of $j$ and $j+1$ have no effect on $\overline{\pi}_j$ and $\overline{\pi}_i$, respectively. 
\end{proof}

\begin{proposition}\label{prop: m length 3}
    The action defined on $\widehat{M^{\lambda}}$ satisfies $\overline{\pi}_i\overline{\pi}_{i+1}\overline{\pi}_i=\overline{\pi}_{i+1}\overline{\pi}_i\overline{\pi}_{i+1}$. 
\end{proposition}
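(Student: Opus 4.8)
The plan is to verify the relation on an arbitrary tabloid $\{t\}\in\widehat{M^{\lambda}}$, after reducing it to a finite check. The key observation is that, by Definition~\ref{def: easy m action}, $\overline{\pi}_i$ either kills $\{t\}$, rescales it by $-1$, or swaps the rows of the letters $i$ and $i+1$ while leaving every other letter fixed; likewise $\overline{\pi}_{i+1}$ only ever moves the letters $i+1$ and $i+2$. Hence applying any word in $\overline{\pi}_i$ and $\overline{\pi}_{i+1}$ to $\{t\}$ consults only the relative vertical positions of the three letters $i,i+1,i+2$ — equivalently, via Fact~\ref{fact: tabloid bijection} and Lemma~\ref{lem: bij tab des}, only the order type of the triple $\bigl([w_t]_i,[w_t]_{i+1},[w_t]_{i+2}\bigr)$ — and the multiset of those three rows is preserved at every stage at which the result is nonzero. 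So it suffices to check the relation once for each of the $13$ order types of a weakly ordered triple: the all-equal type, the six types in which exactly two of the rows coincide, and the six types in which the three rows are pairwise distinct.

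First I would dispatch the seven ``degenerate'' types, in which at least two of $i,i+1,i+2$ occupy a common row. In each of these a short computation shows that both $\overline{\pi}_i\overline{\pi}_{i+1}\overline{\pi}_i\{t\}$ and $\overline{\pi}_{i+1}\overline{\pi}_i\overline{\pi}_{i+1}\{t\}$ equal $0$: whichever pair among $\{i,i+1\}$ or $\{i+1,i+2\}$ begins in a shared row, at least one of the three generator applications is made to a tabloid in which that pair still shares a row, and Definition~\ref{def: easy m action} then returns $0$, which propagates through the remaining applications. Here one uses that $\overline{\pi}_i$ never moves the letter $i+2$ and $\overline{\pi}_{i+1}$ never moves the letter $i$.

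Next I would treat the six types with three distinct rows. No side is ever killed, so at each of the three steps the relevant generator acts either by negation or by a row swap $s_j$, and (since $\overline{\pi}_j$ always leaves its pair weakly increasing) the net effect on the tableau is, on both sides, the unique rearrangement $\{t''\}$ of $\{t\}$ that distributes $i,i+1,i+2$ among the three occupied rows so that the subword $[w_{t''}]_i[w_{t''}]_{i+1}[w_{t''}]_{i+2}$ is increasing. What remains is to compare the two accumulated signs, which I would record in a six-row table tracking, after each step, the current entries in positions $i,i+1,i+2$ and the sign picked up; in every one of the six orderings the two signs agree (both equal $-\operatorname{sgn}\sigma$, where $\sigma$ is the order type). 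Together with the degenerate cases, this establishes $\overline{\pi}_i\overline{\pi}_{i+1}\overline{\pi}_i=\overline{\pi}_{i+1}\overline{\pi}_i\overline{\pi}_{i+1}$ on all of $\widehat{M^{\lambda}}$.

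The computation is entirely mechanical, so there is no real obstacle beyond bookkeeping. The one place to be careful is the seven degenerate types: on the side that is not annihilated at the very first step, one must confirm that no subsequent generator can pull the two same-row letters apart before the annihilating generator acts — which is precisely why the observation that $\overline{\pi}_i$ fixes the letter $i+2$ and $\overline{\pi}_{i+1}$ fixes the letter $i$ is needed.
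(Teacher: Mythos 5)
Your proposal is correct and takes essentially the same route as the paper: an exhaustive case analysis on the relative rows of $i$, $i+1$, $i+2$, organized here into the $13$ weak order types (the paper exhibits only three illustrative cases out of its twelve and leaves the rest as similar computations, so your version is if anything more systematic). Two small cautions: in the degenerate types the annihilating step is not always triggered by the pair that \emph{originally} shared a row (e.g.\ when $i,i+1$ start together with $i+2$ below, the side beginning with $\overline{\pi}_{i+1}$ is killed only at the third step, because $i+1$ and $i+2$ have been brought into a common row), so the uniform justification should be that some pair relevant to the acting generator is shared at the moment it acts, which the row-multiset-preservation argument still delivers; and under the paper's conventions the common terminal arrangement in the six nondegenerate types is the one with $i$ south of $i+1$ south of $i+2$, i.e.\ the subword of $w_{t}$ is \emph{decreasing}, not increasing. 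Neither point affects the validity of the argument, since the actual three-step computations carry the proof.
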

\begin{proof}
    While there are 12 individual cases to check for an arbitrary tabloid $\{t\}$, across three families, we proceed by showing one illustrative case from each of the three families. The remaining cases are similar to these illustrative cases and are straightforward computations.

\begin{case}
    For the first representative case, we assume that, in a tabloid $\{t\}$, $i+2$ is above $i$, which is above $i+1$. Then, we have
    \begin{align*}
        \pibar_i\pibar_{i+1}\pibar_i\{t\} &= \pibar_i\pibar_{i+1}(s_i\{t\}) \\
        &= \pibar_i(-s_i\{t\}) \\
        &= s_i\{t\} \\
        \pibar_{i+1}\pibar_i\pibar_{i+1}\{t\} &= \pibar_{i+1}\pibar_i(-\{t\}) \\
        &= \pibar_{i+1}(-s_i\{t\}) \\
        &= s_i\{t\}
    \end{align*}
\end{case}

\begin{case}
    Next, suppose $i$ and $i+2$ are in the same row, while $i+1$ is in a row above. Then, 
    \begin{align*}
        \pibar_i\pibar_{i+1}\pibar_i\{t\} &= \pibar_i\pibar_{i+1}(-\{t\}) \\
        &= \pibar_i(-s_{i+1}\{t\}) \\
        &= 0 \\
        \pibar_{i+1}\pibar_i\pibar_{i+1}\{t\} &= \pibar_{i+1}\pibar_i(s_{i+1}\{t\}) \\
        &= 0
    \end{align*}
\end{case}

\begin{case}
    Finally, suppose $i$ and $i+2$ are in the same row, while $i+1$ is in a row below. Then, 
    \begin{align*}
        \pibar_i\pibar_{i+1}\pibar_i\{t\} &= \pibar_i\pibar_{i+1}(s_i\{t\}) \\
        &= 0 \\
        \pibar_{i+1}\pibar_i\pibar_{i+1}\{t\} &= \pibar_{i+1}\pibar_i(-\{t\}) \\
        &= \pibar_{i+1}(-s_i\{t\}) \\
        &= 0
    \end{align*}
\end{case}
\end{proof}

Now that we have shown that this action is valid, we can start creating the composition series that will lead us to the quasisymmetric Frobenius characteristic. 

\begin{figure}[h]
    \centering
\begin{tikzpicture}
    \node (label) at (-4.4,4.5) {$\widehat{\mathbf{M^{22}}}=\mathbf{M_1}$};
    \node at (-4.7,2.2) {$\mathbf{M_2}$};
    \node at (-4.5,0.4) {$\mathbf{M_3}$};
  \node (1423) at (2,0) {\tiny${\ytableaushort{14,23}}$};
  \node (2413) at (0,-2) {\tiny${\ytableaushort{24,13}}$};
  \node (1324) at (0,2) {\tiny${\ytableaushort{13,24}}$};
  
  \node (1234) at (0,4) {\tiny${\ytableaushort{12,34}}$};
  \node (3412) at (0,-4) {\tiny${\ytableaushort{34,12}}$};
  \node (2314) at (-2,0) {\tiny${\ytableaushort{23,14}}$};
  \node[scale=0.5] (mid zero) at (0,0) {0};
  \node[scale=0.5] (top zero) at (-1,4) {0};
  \node[scale=0.5] (3412 zero) at (1,-4) {0};
  \draw (1423) edge[->] node[left] {$\overline{\pi}_1$} (2413);
  \draw (1324) edge[->] node[right] {$\overline{\pi}_3$} (1423);
  \draw (2413) edge[->] node[left] {$\overline{\pi}_2$} (3412);
  \draw (2314) edge[->] node[left] {$\overline{\pi}_3$} (2413);
  \draw (1234) edge[->] node[right] {$\overline{\pi}_2$} (1324);
  \draw (1324) edge[->] node[right] {$\overline{\pi}_1$} (2314);
  \draw (1423) edge[->] node[scale=0.5,below] {$\overline{\pi}_2$} (mid zero);
  \draw[scale=0.5] (2314) edge[->] node[scale=0.5,below] {$\overline{\pi}_2$} (mid zero);
  \draw[scale=0.5] (3412) edge[->] node[scale=0.5,below] {$\overline{\pi}_1,\overline{\pi}_3$} (3412 zero);
  \draw[scale=0.5] (1234) edge[->] node[scale=0.5,below] {$\overline{\pi}_1,\overline{\pi}_3$} (top zero);
  \path (2413) edge[loop right] node[scale=0.75] (2413) {$\overline{\pi}_1=\overline{\pi}_3=-1$} (2413);
  \path (1324) edge[loop right] node[scale=0.75] (1324) {$\overline{\pi}_2=-1$} (1324);
  \path (3412) edge[loop left] node[scale=0.75] (3412) {$\overline{\pi}_2=-1$} (3412);
  \path (2314) edge[loop left] node[scale=0.75] (2314) {$\overline{\pi}_1=-1$} (2314);
  \path (1423) edge[loop right] node[scale=0.75] (2314) {$\overline{\pi}_3=-1$} (1423);
\draw  (-5.5,5) rectangle (6,-5);
\draw [dashed] (-5.25,2.7) rectangle (5.6,-4.75);
\draw [dotted] (-5,-4.6) rectangle (5.2,0.75);
\end{tikzpicture} 
\caption{Part of the Composition Series of $\widehat{M^{22}}$}\label{comp series of M22}
\end{figure}

In Figure \ref{comp series of M22}, we draw the first three submodules in the composition series $\widehat{M^{\lambda}}=M_1\supset M_2\supset\cdots M_k=\emptyset$. We see that $\faktor{M_i}{M_{i+1}}$ is indeed one-dimensional, generated by a single tabloid from the basis. If we consider the figure further, we can observe that we remove tabloids from largest to smallest according to $\prec_c$. In particular, we can also define the following action that we claim is equivalent to Definition \ref{def: easy m action}.

\begin{definition}\label{def: ordered m action}
        $$\overline{\pi}_{i}\{t\} = \begin{cases}
        0 & s_i\{t\}=\{t\}\\
-\{t\} & \{t\}\prec s_i\{t\} \\
s_i\{t\} & s_i\{t\}\prec \{t\}
\end{cases}$$
\end{definition}

It is straightforward to see that Definition \ref{def: easy m action} and Definition \ref{def: ordered m action} are equivalent. Thus, we can assume that the commuting relations hold for this action, as well. In particular, this action will make it easier to define the composition series, so that each $M_i$ is determined by removing tabloids that are greater according to $\prec_c$. Again, referring to Figure \ref{comp series of M22}, we can work through the quasisymmetric Frobenius characteristic map.

\begin{example}[Frobenius Characteristic of $\widehat{M^{22}}$]
    Given $\faktor{M_j}{M_{j+1}}\cong C_I$ with generator $[v]_j$, we have $I=\{i\mid \overline{\pi}_iv=-v\}$. Thus, $$F_{char}^Q(\faktor{M_j}{M_{j+1}})=F_I.$$

    Again, from Figure \ref{comp series of M22}, we can see that each successive quotient module $\faktor{M_j}{M_{j+1}}$ is generated by a single equivalence class $[\{t\}_j]_j$. So, for each tabloid, we have $I=\{i\mid \pibar_i\{t\}_j=-\{t\}_j\}$. Thus, for each tabloid, the ``negative" loops that appear will index the fundamental quasisymmetric function that appears in the quasisymmetric Frobenius characteristic image of $\widehat{M^{22}}$. However, we know that a ``negative" loop occurs when we would remove an i-descent, so $I=\text{ides}(\{t\})$.
    
    So, given the figure above, we have $$F_{char}^Q(\widehat{M^{22}})=F_{\emptyset}+F_{\{1\}}+F_{\{3\}}+2F_{\{2\}}+F_{\{1,3\}}$$
    
    However, this sum of quasisymmetric functions is symmetric and can be written in terms of a basis of Sym. In particular, we have $$F_{char}^Q(\widehat{M^{22}})=h_{22}$$
\end{example}

For the upcoming result, we will need a result of the RSK correspondence, which we state here for use later.

\begin{fact}\label{fact: RSK ssyt ides}
    Given a tabloid $\{t\}\in M^{\lambda}$, let $w_t$ be the word that corresponds to the tabloid. 

    Under RSK, $w_t$ maps to the pair $(P,Q)$ with $P$, a semistandard Young tableau, and $Q$, a standard Young tableau, where $$\text{ides}(Q)=\text{des}(w_t)=\text{ides}(\{t\})$$
\end{fact}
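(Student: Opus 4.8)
The plan is to recall the standard RSK bijection on words and invoke the well-known compatibility of RSK with descents, then translate into the tabloid language via the bijection of Fact \ref{fact: tabloid bijection}. Let me think about which pieces I actually need.

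First, I should be clear about the claim. We have $\{t\} \in M^\lambda$, and $w_t \in W(\operatorname{rev}(\lambda))$ is the word assigned by Fact \ref{fact: tabloid bijection}. Applying RSK (row insertion) to $w_t$ produces $(P, Q)$ where $P$ is the insertion tableau (semistandard of content $\operatorname{rev}(\lambda)$... wait, actually content equal to the content of $w_t$, which is $\operatorname{rev}(\lambda)$) and $Q$ is the recording tableau (standard, of the same shape). The first equality $\operatorname{des}(w_t) = \operatorname{ides}(\{t\})$ is literally Lemma \ref{lem: bij tab des}, so that costs nothing. So the real content is $\operatorname{ides}(Q) = \operatorname{des}(w_t)$.

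Second, $\operatorname{ides}(Q) = \operatorname{des}(w_t)$ is the classical statement that the recording tableau of RSK records the descent set of the word: $i \in \operatorname{ides}(Q)$ (i.e. $i$ lies in a strictly lower row than $i+1$ in $Q$) if and only if $[w_t]_i > [w_t]_{i+1}$. I would cite this directly — it appears in Stanley (EC2, the chapter on RSK / Section 7.11 and surrounding discussion) and in Sagan, both of which are already referenced in the paper. The one-line reason is the standard "bumping path" argument: the entry $i+1$ is inserted immediately after $i$; if $[w_t]_i \le [w_t]_{i+1}$ then $i+1$ is appended weakly to the right in its row and $i+1$ ends up weakly northeast... hmm, let me be careful: if $[w_t]_{i+1} \ge [w_t]_i$ then $w_{i+1}$ bumps from a position weakly to the right of where $w_i$ settled, so in $Q$ the cell labeled $i+1$ is weakly above and... actually the clean statement is: $[w_t]_i \le [w_t]_{i+1}$ iff the cell of $i+1$ in $Q$ is strictly to the right of the cell of $i$ in $Q$ in a weakly higher row, equivalently $i \notin \operatorname{ides}(Q)$. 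Rather than reconstruct the bumping argument I would simply cite it as a known property of RSK.

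So the proof is essentially two citations glued by Lemma \ref{lem: bij tab des}. The main "obstacle" — really just a bookkeeping point to get right — is a convention check: the paper defines $\operatorname{ides}$ of a tableau as $\{i \mid i \text{ is south of } i+1\}$, and the classical RSK descent statement is usually phrased for standard Young tableaux in exactly this form, but one must confirm the paper's row-insertion orientation (French vs. English notation, and whether "descent" in the word corresponds to $i$ below $i+1$ or above) matches the cited source's convention. Given the paper works in French notation and has already aligned $\operatorname{ides}$ of tabloids, words, and tableaux consistently in Lemma \ref{lem: bij tab des} and the surrounding definitions, this reconciliation is routine, and I would state it as such rather than belabor it.

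**Proof.** By Lemma \ref{lem: bij tab des}, $\operatorname{ides}(\{t\}) = \operatorname{des}(w_t)$, so it suffices to show $\operatorname{ides}(Q) = \operatorname{des}(w_t)$. This is the classical fact that the recording tableau of the RSK correspondence records the descent set of the inserted word: for a word $w = [w]_1 \cdots [w]_n$ with recording tableau $Q$, one has $i \in \operatorname{ides}(Q)$ (that is, $i$ lies strictly south of $i+1$ in $Q$) if and only if $[w]_i > [w]_{i+1}$. Indeed, when $[w]_i \le [w]_{i+1}$ the letter $[w]_{i+1}$ is inserted along a bumping path that is weakly to the right of, and terminates in a row weakly above, that of $[w]_i$, placing the cell labeled $i+1$ strictly east of the cell labeled $i$ in $Q$; when $[w]_i > [w]_{i+1}$ the opposite holds and the cell labeled $i+1$ lies strictly north of the cell labeled $i$. (See \cite{stanley} or \cite{sagan}.) Hence $\operatorname{ides}(Q) = \operatorname{des}(w_t) = \operatorname{ides}(\{t\})$, as claimed.
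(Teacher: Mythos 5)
Your proof is correct and consistent with the paper's treatment: the paper states this Fact without proof, as a known consequence of the RSK correspondence, and you correctly identify its entire content as the classical descent-preservation property of the recording tableau ($i\in\text{des}(w)$ iff $i$ is south of $i+1$ in $Q$) glued to Lemma \ref{lem: bij tab des}. The bumping-path sketch and the French-notation convention check are both handled correctly, so there is nothing to add.
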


As anticipated, we have the following result.

\begin{theorem}\label{thm: quasi char of m}
There exists a $\mathcal{H}_n(0)$-action on the permutation modules $\widehat{M^{\lambda}}$ such that $$F_{char}^Q(\widehat{M^{\lambda}})=h_{\lambda}=F_{char}(M^{\lambda}).$$
\end{theorem}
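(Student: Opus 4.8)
The plan is to build an explicit composition series for $\widehat{M^\lambda}$ in which each successive quotient is one-dimensional, compute the index set of each quotient via Definition~\ref{def: ordered m action}, and then recognize the resulting sum of fundamental quasisymmetric functions as $h_\lambda$. The key structural observation, already visible in Figure~\ref{comp series of M22}, is that the total order $\vartriangleleft_t$ on the tabloid basis of $M^\lambda$ gives a filtration $M^\lambda = M_1 \supsetneq M_2 \supsetneq \cdots \supsetneq M_{m+1} = 0$, where $m = \dim M^\lambda$ and $M_j$ is spanned by all tabloids that are $\vartriangleleft_t$-smaller than the $j$-th largest tabloid. First I would verify that each $M_j$ is an $\mathcal{H}_n(0)$-submodule: by Definition~\ref{def: ordered m action}, $\overline{\pi}_i\{t\}$ is either $0$, $-\{t\}$, or $s_i\{t\}$ with $s_i\{t\} \vartriangleleft \{t\}$, so the action never sends a tabloid to a strictly $\vartriangleleft_t$-larger one; hence the span of any down-set in $\vartriangleleft_t$ is closed under the action. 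Since $\vartriangleleft_t$ refines the partial order $\vartriangleleft$, the down-sets $M_j$ are exactly such spans, so they are submodules.

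Next I would identify the quotients. Each $\faktor{M_j}{M_{j+1}}$ is one-dimensional, spanned by the image of the $j$-th tabloid $\{t\}_j$, and in this quotient $\overline{\pi}_i$ acts by the scalar that is $-1$ precisely when $\overline{\pi}_i\{t\}_j = -\{t\}_j$, i.e. (by Definition~\ref{def: easy m action}) precisely when $i$ is south of $i+1$ in $\{t\}_j$ — note the cases $\overline{\pi}_i\{t\}_j = 0$ and $\overline{\pi}_i\{t\}_j = s_i\{t\}_j \in M_{j+1}$ both collapse to $0$ in the quotient. Thus $\faktor{M_j}{M_{j+1}} \cong C_{\mathrm{ides}(\{t\}_j)}$, and therefore
\begin{equation*}
F_{char}^Q(\widehat{M^\lambda}) = \sum_{\{t\} \in M^\lambda} F_{\mathrm{ides}(\{t\})}.
\end{equation*}
By Fact~\ref{fact: tabloid bijection} and Lemma~\ref{lem: bij tab des}, the tabloids of shape $\lambda$ biject with words $w_t$ of content $\mathrm{rev}(\lambda)$ with $\mathrm{ides}(\{t\}) = \mathrm{des}(w_t)$, so the sum equals $\sum_{w \in W(\mathrm{rev}(\lambda))} F_{\mathrm{des}(w)}$.

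Finally I would evaluate this sum. Applying RSK to each word $w_t$ and using Fact~\ref{fact: RSK ssyt ides}, each $w$ of content $\mathrm{rev}(\lambda)$ corresponds to a pair $(P,Q)$ with $P \in \mathrm{SSYT}(\mu, \mathrm{rev}(\lambda))$ for some $\mu \vdash n$, $Q \in \mathrm{SYT}(\mu)$, and $\mathrm{des}(w) = \mathrm{ides}(Q)$. Grouping the sum by the common shape $\mu$, the number of choices of $P$ is $K_{\mu,\mathrm{rev}(\lambda)} = K_{\mu\lambda}$ (the Kostka number is invariant under reordering the content), so
\begin{equation*}
\sum_{w \in W(\mathrm{rev}(\lambda))} F_{\mathrm{des}(w)} = \sum_{\mu \vdash n} K_{\mu\lambda} \sum_{Q \in \mathrm{SYT}(\mu)} F_{\mathrm{ides}(Q)} = \sum_{\mu \vdash n} K_{\mu\lambda}\, s_\mu = h_\lambda,
\end{equation*}
using Fact~\ref{fact: schurs and fundamentals} and Lemma~\ref{lem: h in terms of schur}. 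Combined with $F_{char}(M^\lambda) = h_\lambda$ from the background, this gives the theorem. The main obstacle is the bookkeeping in the first step — confirming rigorously that $\vartriangleleft_t$ is compatible with the action so that the down-sets really are submodules (equivalently, that the "straightened" action never increases $\vartriangleleft_t$); once that filtration is in hand, the rest is assembling cited facts. A secondary point to state carefully is the Kostka invariance $K_{\mu,\mathrm{rev}(\lambda)} = K_{\mu\lambda}$, which is standard but should be flagged.
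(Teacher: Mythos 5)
Your proposal is correct and follows essentially the same route as the paper's proof: the same $\vartriangleleft_t$-ordered composition series with one-dimensional quotients $C_{\mathrm{ides}(\{t\})}$, the same passage to words of content $\mathrm{rev}(\lambda)$, and the same RSK/Kostka-invariance computation yielding $h_\lambda$. Your explicit verification that down-sets are closed under the action is a welcome bit of extra care that the paper only asserts.
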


\begin{proof}
    Let $\{t\}_1, \dots, \{t\}_k$ be the list of tabloids that form the basis of $\widehat{M^{\lambda}}$ given in the total ordering consistent with dominance ordering, $\prec_c$, where $\{t\}_1$ is the largest. Define submodules $M_j=\text{span}(\{t\}_j, \{t\}_{j+1}, \dots, \{t\}_k)$. Thus, we have a composition series $\widehat{M^{\lambda}}=M_1\supset M_2\supset\cdots\supset M_k=\emptyset$.
    
    We note a couple of facts about this composition series. First, we observe that $\faktor{M_j}{M_{j+1}}$ is generated by $[\{t\}_j]_j$. Furthermore, this composition series is, in fact, a composition series of submodules as we have $M_j\supset M_{j+1}$, by construction of our action.
    
    Then, $\faktor{M_j}{M_{j+1}}\cong C_I$, where $C_I=\{i\mid \pibar_i\{t\}_j=-\{t\}_j\}$. So, by the definition of the quasisymmetric Frobenius characteristic map, we have $$F_{char}^Q(\widehat{M^{\lambda}})=\sum\limits_{\{t\}\in \widehat{M^{\lambda}}} F_{\{i\mid \pibar_i\{t\}=-\{t\}\}}$$
    
    However, since $\overline{\pi}_i\{t\}=-\{t\}$ when $i$ is south of $i+1$, this means that $I=\{i\mid i \text{ south of } i+1\text{ in } t\}$. By definition of the i-descent set of a tabloid, we can conclude that $I=\text{ides}(\{t\})$. Thus, overall, we have the following formula for the quasisymmetric Frobenius characteristic of $\widehat{M^{\lambda}}$:
    \begin{align*}
        F_{char}^Q(\widehat{M^{\lambda}}) &= \sum\limits_{\{t\}\in \widehat{M^{\lambda}}} F_{\text{ides}(\{t\})} \\
        &=\sum\limits_{w\in W(\text{rev}(\lambda))} F_{\text{des}(w)}
    \end{align*}
    where the second sum comes from the bijection on tabloids given in Fact \ref{fact: tabloid bijection} and Lemma \ref{lem: bij tab des}. 
    
    Splitting the sum, we have $$F_{char}^Q(\widehat{M^{\lambda}}) =\sum\limits_{B\subseteq [n-1]} F_B \sum\limits_{w\in W(\text{rev}(\lambda))} \mathbbm{1}_{\text{des}(w)=B},$$
    and by a result of the RSK correspondence given in Fact \ref{fact: RSK ssyt ides}, we have $$F_{char}^Q(\widehat{M^{\lambda}})=\sum\limits_{B\subseteq [n-1]} F_B \sum\limits_{\mu\vdash n}\sum\limits_{P\in\text{SSYT}(\mu, \text{rev}(\lambda))}\sum\limits_{Q\in\text{SYT}(\mu)} \mathbbm{1}_{\text{ides}(Q)=B}.$$
    
    Straightforward manipulations will then show that 

        \begin{align*}
F_{char}^Q(\widehat{M^{\lambda}}) &= \sum\limits_{B\subseteq[n-1]} \sum\limits_{\mu\vdash n}\sum\limits_{Q\in\text{SYT}(\mu)} K_{\mu\text{rev}(\lambda)} \mathbbm{1}_{\text{ides}(Q)=B}F_B \\
&= \sum\limits_{\mu\vdash n}\sum\limits_{Q\in\text{SYT}(\mu)} K_{\mu\text{rev}(\lambda)} F_{\text{ides}(Q)} \\
&= \sum\limits_{\mu\vdash n} K_{\mu\text{rev}(\lambda)}\sum\limits_{Q\in\text{SYT}(\mu)} F_{\text{ides}(Q)} \\
&= \sum\limits_{\mu\vdash n} K_{\mu\text{rev}(\lambda)} s_{\mu}
        \end{align*}

        In his textbook \cite{sagan}, Sagan takes a representation theoretic approach to showing that $K_{\mu\tilde{\lambda}}=K_{\mu\lambda}$ for any rearrangement $\tilde{\lambda}$ of $\lambda$. Thus, 
        \begin{align*}
            F_{char}^Q(\widehat{M^{\lambda}}) &= \sum\limits_{\mu\vdash n} K_{\mu\text{rev}(\lambda)} s_{\mu} \\
            &= \sum\limits_{\mu\vdash n} K_{\mu\lambda} s_{\mu} \\
            &= h_{\lambda} 
        \end{align*}

\end{proof}

\section{An $\mathcal{H}_n(0)$-module, $\widehat{S^{\lambda}}$}\label{section: specht}

Given a valid $\mathcal{H}_n(0)$-action on tabloids, we can extend to a valid action on polytabloids. In particular, recall that $S^{\lambda}$ is the Specht module, generated by standard polytabloids. We defined a dominance ordering on tableaux that was then extended to an ordering on polytabloids in Definition \ref{def: col dom poly}. 

Similarly to the previous section, our main goals will be to define a related $\mathcal{H}_n(0)$-action on $S^{\lambda}$ and a compatible composition series of submodules closed under that action: $$S^{\lambda}=M_1\supset M_2 \supset\cdots \supset M_{f^{\lambda}}\supset\emptyset.$$  As above, it turns out that selecting a total ordering on the basis $\{e_T\mid T\in\text{SYT}(\lambda)\}$ can easily give rise to both.  Using Definition \ref{def: col dom poly} above, consider the ordering $e_{T_1}\prec e_{T_2} \prec\cdots\prec e_{T_{f^{\lambda}}}$.  Then we can easily define the composition series: 
$$\text{span}(e_{T_1}, \dots, e_{T_{f^{\lambda}}})\supset \text{span}(e_{T_2}, \dots, e_{T_{f^{\lambda}}})\supset \cdots \supset \text{span}(e_{T_{f^{\lambda}}})\supset \emptyset.$$
Next, to ensure a compatible $\mathcal{H}_n(0)$ action to the composition series, it must be the case that if $$\pibar_i(e_{T_j})=\sum\limits_{k=1}^{f^\lambda}c_k e_{T_k},$$ then $c_k=0$ when $k>j$. Moreover, $c_j$ must be either 0 or -1, since $\pibar_i^2=-\pibar_i$.

To formalize the action, we will need the concepts of the \textit{leading term} and \textit{trailing term} with respect to a given basis and ordering.

\begin{definition}
    Assume $\mathcal{B}=\{v_1, \dots, v_l\}$ is a basis of a vector space, given in decreasing order according to $\leq$ defined on $\mathcal{B}$.
    
    Let the leading term of a vector $v$ be $\text{lt}(v)=\text{lt}(\sum c_iv_i)=c_jv_j$, where $j$ is the smallest index such that $c_j\neq0$.
    
    Let the trailing term of a vector $v$ be $\text{tt}(v)=\text{tt}\left(\sum c_iv_i\right)=c_jv_j$, where $j$ is the largest index such that $c_j\neq0$.
\end{definition}

Now, we recall the $\mathfrak{S}_n$-action on polytabloids, which we will deform to define the compatible $\mathcal{H}_n(0)$-action.

\begin{recall}
    $$s_i(e_T)=\begin{cases}
-e_T & i \hspace{2mm}\text{and $i+1$ in the same column} \\
e_T\pm \sum\limits_{T\prec S} c_{T,S}^ie_{S} & i \hspace{2mm}\text{and $i+1$ in the same row}\\
e_{s_iT} & i \hspace{2mm}\text{and $i+1$ in a different row and column}
\end{cases},$$ where $c_{T,S}^i\in\mathbb{C}$.
\end{recall}

\begin{definition}\label{def: specht hecke}
        $$\overline{\pi}_{i}e_T = \begin{cases}
        0 & \text{tt}(s_ie_T)=e_T\\
-e_T & e_{T}\prec e_{s_iT} \text{ or } s_ie_T=-e_T \\
e_{s_iT} & e_{s_iT}\prec e_T
\end{cases}$$
\end{definition}

\noindent Note that this $\pibar_i$ action is more involved than that on tabloids. This is because $s_i(e_T)$ may be a linear combination of standard polytabloids, when written in terms of the basis. Thus, we must account for additional cases when comparing via $\prec$. However, it is straightforward to see that, much like tabloids, there is an easier, equivalent action for which it will be more straightforward to show that the commuting relations hold.

\begin{definition}
        $$\overline{\pi}_{i}e_T = \begin{cases}
        0 & i,i+1\text{ in the same row of } T\\
-e_T & i \text{ southeast of } i+1 \text{ in } T \\
e_{s_iT} & i \text{ northwest of } i+1 \text{ in } T
\end{cases}$$
\end{definition}

\noindent As with the actions on $\widehat{M^{\lambda}}$, it is relatively straightforward to see that these two $\mathcal{H}_n(0)$-actions on $\widehat{S^{\lambda}}$ are equivalent.

\begin{proposition}
    $\widehat{S^{\lambda}}$ is a $\mathcal{H}_n(0)$-module under the $\pibar_i$ action defined above.
\end{proposition}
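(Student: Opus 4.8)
The plan is to verify the three defining relations of $\mathcal{H}_n(0)$ directly on the simpler, equivalent action
$$\overline{\pi}_{i}e_T = \begin{cases} 0 & i,i+1\text{ in the same row of } T\\ -e_T & i \text{ southeast of } i+1 \text{ in } T \\ e_{s_iT} & i \text{ northwest of } i+1 \text{ in } T,\end{cases}$$
exactly mirroring the structure of Propositions \ref{prop: m squared}, \ref{prop: m length 2}, and \ref{prop: m length 3}. First, however, I must record that this action is well-defined, i.e. that $e_{s_iT}$ is again a standard polytabloid when $i$ is northwest of $i+1$ in $T$: by Fact \ref{fact: posns in std tab} the positions ``$i$ directly below $i+1$'' and ``$i$ to the left of $i+1$'' are exactly the ``same column'' and ``same row'' cases, so the remaining standard positions are ``$i$ strictly southeast'' and ``$i$ strictly northwest'' of $i+1$, and in the latter case swapping $i,i+1$ still gives strictly increasing rows and columns. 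I would also note briefly why this coincides with Definition \ref{def: specht hecke}: the ``same row'' case is precisely when $\text{tt}(s_ie_T)=e_T$ (the straightening of $s_ie_T = e_T \pm \sum_{S\vartriangleright T}c_S e_S$ has $e_T$ as trailing term under $\vartriangleleft$), the ``$i$ southeast of $i+1$'' case splits into $s_ie_T=-e_T$ (same column) and $e_{s_iT}\vartriangleright e_T$ (different row/column, using Lemma \ref{lemma: si col dom on std tab}), and ``$i$ northwest of $i+1$'' gives $e_{s_iT}\vartriangleleft e_T$ again by Lemma \ref{lemma: si col dom on std tab}.

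For $\overline{\pi}_i^2 = -\overline{\pi}_i$: the three positions of $i$ relative to $i+1$ that concern $\overline{\pi}_i$ are ``same row'', ``$i$ northwest of $i+1$'', ``$i$ southeast of $i+1$'' (the ``same column'' and ``$i$ directly below'' subcases fold into ``southeast''/``same row'' appropriately, or rather the defining trichotomy already partitions the standard tableaux once we know $T$ is standard and group ``same column'' with the $-e_T$ branch). In the same-row case $\overline{\pi}_i e_T = 0$ and we are done. If $i$ is northwest of $i+1$ in $T$, then in $s_iT$ the letter $i$ is southeast of $i+1$, so $\overline{\pi}_i^2 e_T = \overline{\pi}_i e_{s_iT} = -e_{s_iT} = -\overline{\pi}_i e_T$. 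If $i$ is southeast of $i+1$, then $\overline{\pi}_i e_T = -e_T$ and $\overline{\pi}_i(-e_T) = e_T = -\overline{\pi}_i e_T$. The commuting relation $\overline{\pi}_i\overline{\pi}_j = \overline{\pi}_j\overline{\pi}_i$ for $|i-j|\ge 2$ is immediate exactly as in Proposition \ref{prop: m length 2}: the relative position of $i,i+1$ is unaffected by applying $\overline{\pi}_j$ (which only moves $j,j+1$), and vice versa, and each of the three output forms ($0$, $-e_T$, $e_{s_iT}$) is compatible with this.

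The braid relation $\overline{\pi}_i\overline{\pi}_{i+1}\overline{\pi}_i = \overline{\pi}_{i+1}\overline{\pi}_i\overline{\pi}_{i+1}$ is the main obstacle, and I would handle it by a case analysis on the relative positions of $i$, $i+1$, $i+2$ in $T$, organized into the same three families used in Proposition \ref{prop: m length 3}, presenting one illustrative case per family and asserting the rest are analogous. The subtlety here, absent in the tabloid setting, is that an intermediate tableau such as $s_iT$ or $s_{i+1}T$ may be non-standard, in which case one must straighten; I expect, however, that in every case where a non-standard intermediate arises the chain of $\overline{\pi}$'s collapses to $0$ before straightening matters (for instance when $i$ and $i+2$ share a row and $i+1$ lies in another row), so that the genuine work reduces to the handful of configurations where all intermediates stay standard — and there the computation is formally identical to the tabloid computation in Proposition \ref{prop: m length 3}, since on standard tableaux with all intermediates standard the action $e_T\mapsto e_{s_iT}$ behaves just like $\{t\}\mapsto s_i\{t\}$. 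I would close by remarking that once the relations are verified on the easy action, the equivalence with Definition \ref{def: specht hecke} transports them, so $\widehat{S^\lambda}$ is an $\mathcal{H}_n(0)$-module.
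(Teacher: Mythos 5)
Your proposal is correct and follows essentially the same route as the paper: the paper's proof likewise invokes the equivalence of the two definitions of the action and then verifies the three relations on the simpler (position-based) action by arguments analogous to Propositions \ref{prop: m squared}, \ref{prop: m length 2}, and \ref{prop: m length 3}. You simply supply more of the detail (well-definedness of $e_{s_iT}$, the case-matching between the two definitions, and the caveat about non-standard intermediates in the braid relation) that the paper leaves implicit.
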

\begin{proof}
    Since both definitions are equivalent, we can use the second action to show that the commuting relations hold, and thus $\widehat{S^{\lambda}}$ is a $\mathcal{H}_n(0)$-module. The proofs for each of the commuting relations are analogous to those of Propositions \ref{prop: m squared}, \ref{prop: m length 2}, and \ref{prop: m length 3}.
\end{proof}

\noindent As with $\widehat{M^{\lambda}}$, we see that the ordering $\prec$ gives a natural way to construct a composition series of $\widehat{S^{\lambda}}$ such that each quotient space is a one-dimensional irreducible $\mathcal{H}_n(0)$-module that is isomorphic to some $C_I$. Recall that $I=\{i\mid \pibar_ie_T=-e_T\}$, so we monitor where the $\pibar_i$ action results in a negative loop. 

\begin{theorem}\label{thm: specht qsym char}
Under the quasisymmetric Frobenius characteristic map, we have $$F_{char}^Q(\widehat{S^{\lambda}})=s_{\lambda}=F_{char}(S^{\lambda})$$
\end{theorem}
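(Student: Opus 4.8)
The plan is to mirror the proof of Theorem~\ref{thm: quasi char of m} as closely as possible, transferring the computation from tabloids to standard polytabloids. First I would fix the total ordering $e_{T_1} \vartriangleleft e_{T_2} \vartriangleleft \cdots \vartriangleleft e_{T_{f^\lambda}}$ on the basis of $\widehat{S^\lambda}$ and set $M_j = \mathrm{span}(e_{T_j}, \dots, e_{T_{f^\lambda}})$. The key point that makes this work is that the deformed $\pibar_i$ action is \emph{lower-triangular} with respect to $\vartriangleleft$: by Definition~\ref{def: specht hecke} (and Lemma~\ref{lemma: si col dom on std tab}, which controls when $e_{s_iT} \vartriangleright e_T$), $\pibar_i e_{T_j}$ is either $0$, $-e_{T_j}$, or $e_{s_iT_j}$ with $e_{s_iT_j} \vartriangleleft e_{T_j}$; in the straightening case $s_i e_T = e_T \pm \sum_{S \vartriangleright T} c_S e_S$ the trailing term is what governs the action, so no term of index larger than $j$ ever appears. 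Hence each $M_j$ is a genuine submodule, $\faktor{M_j}{M_{j+1}}$ is one-dimensional generated by (the image of) $e_{T_j}$, and $\faktor{M_j}{M_{j+1}} \cong C_{I_j}$ with $I_j = \{i \mid \pibar_i e_{T_j} = -e_{T_j}\}$.

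Next I would identify $I_j$ combinatorially. From the equivalent ``easy'' form of the action, $\pibar_i e_T = -e_T$ exactly when $i$ is southeast of $i+1$ in $T$ (this subsumes the same-column case). I claim this set is precisely $\mathrm{ides}(T) = \{i \mid i \text{ is south of } i+1 \text{ in } T\}$: for a standard tableau $T$, Fact~\ref{fact: posns in std tab} says $i$ and $i+1$ are never in the same row, and among the remaining positions ``$i$ weakly south and strictly southeast, or directly below'' is exactly the condition that $i$ lies in a strictly lower row than $i+1$, i.e.\ $i \in \mathrm{ides}(T)$. (I would double-check the edge case ``$i$ directly below $i+1$'': here $i,i+1$ are in the same column so $\pibar_i e_T = -e_T$, and indeed $i$ is south of $i+1$, so it belongs in $\mathrm{ides}(T)$ — consistent.) Therefore
\begin{align*}
F_{char}^Q(\widehat{S^\lambda}) &= \sum_{j=1}^{f^\lambda} F_{I_j} = \sum_{T \in \mathrm{SYT}(\lambda)} F_{\mathrm{ides}(T)}.
\end{align*}

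Finally I would invoke Fact~\ref{fact: schurs and fundamentals}, which states exactly that $s_\lambda = \sum_{T \in \mathrm{SYT}(\lambda)} F_{\mathrm{ides}(T)}$, to conclude $F_{char}^Q(\widehat{S^\lambda}) = s_\lambda$; and $F_{char}(S^\lambda) = s_\lambda$ was recorded in the background section. I expect the main obstacle to be the triangularity verification in the straightening case: one must be sure that when $s_i e_T$ is rewritten via Garnir relations as $e_T \pm \sum_{S \vartriangleright T} c_S e_S$, the deformed action (which keeps only information about the trailing term relative to $\vartriangleleft$) genuinely respects the filtration — i.e.\ that $\mathrm{tt}(s_i e_T) \in \{e_T, -e_T, e_S \text{ with } S \vartriangleleft T\}$ in all configurations of $i, i+1$, and that Definition~\ref{def: specht hecke} and its ``easy'' reformulation really do agree on these cases. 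This is the analogue of the ``equivalence of the two actions'' step for $\widehat{M^\lambda}$, but it is genuinely more delicate here because $s_i$ can produce a nontrivial linear combination; Lemma~\ref{lemma: si col dom on std tab} is the tool that resolves it, by pinning down precisely when $e_{s_iT}$ is $\vartriangleright$ or $\vartriangleleft e_T$.
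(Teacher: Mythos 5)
Your proposal is correct and follows essentially the same route as the paper: filter $\widehat{S^\lambda}$ by the total order $\vartriangleleft$ on standard polytabloids, identify each composition factor $\faktor{M_j}{M_{j+1}}$ with $C_{I_j}$ where $I_j=\{i \mid \pibar_i e_{T_j}=-e_{T_j}\}=\mathrm{ides}(T_j)$, and conclude via $s_\lambda=\sum_{T\in\mathrm{SYT}(\lambda)}F_{\mathrm{ides}(T)}$. Your extra care about the triangularity of the straightening case and the equivalence of the two forms of the action is exactly the point the paper treats as ``relatively straightforward,'' so you have if anything supplied slightly more justification than the original.
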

\begin{proof}
First, order the $f^{\lambda}$ standard polytabloids that form the basis of $\widehat{S^{\lambda}}$ according to the total ordering $\prec$, from greatest to least. Call $e_{T_j}$ the $j^{\text{th}}$ polytabloid in the basis ordering. To construct the composition series, $\widehat{S^{\lambda}}=M_1\supset M_2\supset\cdots M_{f^{\lambda}}=\emptyset$, let $M_j=\text{span}{(e_{T_j}, e_{T_{j+1}}, \dots, e_{T_{f^{\lambda}}})}$. 

We note a couple of facts about this composition series. First, we observe that $\faktor{M_j}{M_{j+1}}$ is one-dimensional, generated by $[e_{T_j}]_j$. Furthermore, this composition series is, in fact, a composition series of submodules, and we do have $M_j\supset M_{j+1}$, as our action is based on $\prec$.

Now, $\faktor{M_j}{M_{j+1}}\cong C_I$, where $I=\{i\mid \pibar_ie_{T_j}=-e_{T_j}\}$. So, by the definition of the quasisymmetric Frobenius characteristic map, we have $$F_{char}^Q(\widehat{S^{\lambda}})=\sum\limits_{T\in\text{SYT}(\lambda)} F_{\{i\mid \pibar_ie_T=-e_T\}}.$$

Using the definition of the action, we have 
\begin{align*}
    F_{char}^Q(\widehat{S^{\lambda}})&= \sum\limits_{T\in\text{SYT}(\lambda)} F_{\{i\mid \pibar_ie_T=-e_T\}} \\
    &= \sum\limits_{T\in\text{SYT}(\lambda)} F_{\{i\mid i \text{ southeast of } i+1 \text{ in } T\}} \\
    &= \sum\limits_{T\in\text{SYT}(\lambda)} F_{\text{ides}(T)} \\
    &= s_{\lambda}
\end{align*}
\end{proof}

\section{Implications of the previous sections}\label{section: broad impl}
\noindent The proofs of Theorems \ref{thm: quasi char of m} and \ref{thm: specht qsym char} follow the same general approach. 
In particular, we created an action and a composition series according to the following steps:
\begin{enumerate}
    \item Define an ordering $\leq$ on a basis $\mathcal{B}$ of the $\mathfrak{S}_n$-module $M$.
    \item Using $\mathcal{B}$ and $\leq$, create a maximal decreasing sequence of nested subspaces: $$M=M_1\supset M_2\supset\cdots\supset M_k=\emptyset.$$
    \item Create a compatible $\mathcal{H}_n(0)$-action based on $\leq$ that causes the subspaces $M_i$ to be closed under that action, so that the sequence $$M=M_1\supset M_2\supset\cdots\supset M_k=\emptyset$$ forms a composition series of $M$.
\end{enumerate}

\noindent This approach suggests the following definition:

\begin{definition}\label{def: sqcc}
    Let $M$ be a $\mathfrak{S}_n$-module with basis $\mathcal{B}=\{v_1, \dots, v_k\}$ and a total ordering $\leq$ on $\mathcal{B}$ such that $v_1\leq v_2 \leq \cdots \leq v_k$. On the elements of $\mathcal{B}$, let $$\overline{\pi}_i(v_t)=\begin{cases}
            0 & \text{tt}(s_i(v_t))=v_t \\
            -v_t & s_i(v_t)=v_j> v_t\text{ or } s_i(v_t)=-v_t \\
            s_i(v_t) & s_i(v_t)=v_j< v_t
        \end{cases}.$$
        In particular, assume only these three cases appear on the basis $\mathcal{B}$.
    When $\pibar_i$ is a valid $\mathcal{H}_n(0)$-action on the vector space of $M$, call the resulting module $\widehat{M}$.  Then we must have a resulting composition series 
    $$\text{span}(v_1, \cdots, v_k)\supset \text{span}(v_2, \cdots, v_k)\supset \cdots \supset \text{span}(v_k)\supset \emptyset$$
    as in particular each of the vector spaces are closed under the action.
    
    We say $(M,\mathcal{B},\leq)$ is \textbf{strongly quasisymmetric characteristic compatible} (SQCC) if $$F_{char}^Q(\widehat{M})=F_{char}(M).$$
\end{definition}

\begin{theorem}
    $(M^{\lambda}, \{\{t\}\mid t\text{ is a row-strict tableau of shape } \lambda\}, \prec)$ is strongly quasisymmetric characteristic compatible.
\end{theorem}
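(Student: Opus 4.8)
The plan is to observe that this theorem is essentially a restatement of Theorem \ref{thm: quasi char of m} in the language of the SQCC framework just introduced, so the proof is largely a matter of checking that the three data $(M^{\lambda}, \mathcal{B}, \vartriangleleft)$ satisfy the hypotheses of Definition \ref{def: sqcc} and that the action produced by the general recipe there coincides with the action $\widehat{M^{\lambda}}$ already studied. First I would take $\mathcal{B}$ to be the set of tabloid representatives (the row-strict tableaux of shape $\lambda$), which is indeed a basis of $M^{\lambda}$, and extend the dominance partial order $\vartriangleleft$ on tabloids to a total order $\vartriangleleft_t$ exactly as done before Theorem \ref{thm: quasi char of m}. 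Then I would verify that on this basis only the three cases in Definition \ref{def: sqcc} occur: since $s_i\{t\}$ is always a single tabloid (never a linear combination), we have $\mathrm{tt}(s_i\{t\}) = s_i\{t\}$, so the first case is $s_i\{t\} = \{t\}$, and the other two cases are $s_i\{t\} \vartriangleright \{t\}$ and $s_i\{t\} \vartriangleleft \{t\}$ respectively; the case $s_i(v_t) = -v_t$ never arises for tabloids. This shows the general recipe of Definition \ref{def: sqcc} reduces precisely to Definition \ref{def: ordered m action}.

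Next I would invoke Propositions \ref{prop: m squared}, \ref{prop: m length 2}, and \ref{prop: m length 3} (together with the already-noted equivalence of Definitions \ref{def: easy m action} and \ref{def: ordered m action}) to conclude that $\overline{\pi}_i$ is a valid $\mathcal{H}_n(0)$-action on $M^{\lambda}$, so the module $\widehat{M}$ of Definition \ref{def: sqcc} exists and equals the $\widehat{M^{\lambda}}$ of Section \ref{section: tabloids}. Because the action is built from $\vartriangleleft_t$ — a nontrivial $\overline{\pi}_i$ either kills a tabloid or sends it strictly downward in the order — each truncation $\mathrm{span}(\{t\}_j, \dots, \{t\}_k)$ is closed under the action, giving the required composition series, exactly as in the proof of Theorem \ref{thm: quasi char of m}. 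Finally, the equality $F_{char}^Q(\widehat{M^{\lambda}}) = h_{\lambda} = F_{char}(M^{\lambda})$ is precisely the content of Theorem \ref{thm: quasi char of m}, so $(M^{\lambda}, \mathcal{B}, \vartriangleleft)$ is SQCC.

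The only genuine point needing care — and the step I expect to be the main obstacle, such as it is — is confirming that the abstract action of Definition \ref{def: sqcc} really does collapse to the concrete tabloid action, i.e. that the leading/trailing-term machinery is vacuous here because $s_i$ permutes basis elements. Once that bookkeeping is done, everything else is a citation of results already in hand, so the proof is short: it amounts to saying ``apply Definition \ref{def: sqcc} with this basis and order, note the recipe yields Definition \ref{def: ordered m action}, and quote Theorem \ref{thm: quasi char of m}.''
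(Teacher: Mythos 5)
Your proposal is correct and follows the same route as the paper, whose proof is simply the one-line citation of Theorem \ref{thm: quasi char of m} and Definition \ref{def: sqcc}; you merely spell out the (routine but worthwhile) verification that the SQCC recipe collapses to Definition \ref{def: ordered m action} because $s_i$ permutes tabloids, which the paper leaves implicit.
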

\begin{proof}
    This follows from Theorem \ref{thm: quasi char of m} and Definition \ref{def: sqcc}.
\end{proof}

\begin{theorem}
    $(S^{\lambda}, \{e_T\mid T\in\text{SYT}(\lambda)\}, \prec)$ is strongly quasisymmetric characteristic compatible.
\end{theorem}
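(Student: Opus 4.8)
The plan is to reduce this statement to the already-established Theorem \ref{thm: specht qsym char} by verifying that the abstract setup in Definition \ref{def: sqcc} instantiates, for the triple $(S^{\lambda}, \{e_T \mid T \in \mathrm{SYT}(\lambda)\}, \vartriangleleft)$, to exactly the $\mathcal{H}_n(0)$-module $\widehat{S^{\lambda}}$ constructed in Section \ref{section: specht}. First I would unwind the generic action in Definition \ref{def: sqcc}: for a basis polytabloid $e_T$, the operator $s_i$ acts by $s_i e_T = e_{s_iT}$ when $i,i+1$ lie in different rows and columns, by $s_i e_T = -e_T$ when $i,i+1$ share a column, and by $s_i e_T = e_T \pm \sum_{S \vartriangleright T} c_S e_S$ when $i,i+1$ share a row. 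I would check these three cases match the three branches of the generic $\pibar_i$: the shared-row case has $\mathrm{tt}(s_i e_T) = e_T$ (the trailing term with respect to $\vartriangleleft$, greatest-to-least, is the $\vartriangleleft$-smallest polytabloid appearing, which is $e_T$ itself since all other terms are $\vartriangleright T$), giving $\pibar_i e_T = 0$; the shared-column case has $s_i e_T = -e_T$, giving $\pibar_i e_T = -e_T$; and the different-row-and-column case gives $s_i e_T = e_{s_iT}$, which by Lemma \ref{lemma: si col dom on std tab} satisfies $e_{s_iT} \vartriangleright e_T$ iff $i+1$ is strictly northwest of $i$ in $T$ and $e_{s_iT} \vartriangleleft e_T$ otherwise, matching the remaining two branches. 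This confirms the generic action coincides with Definition \ref{def: specht hecke}.

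Next I would observe that Section \ref{section: specht} already proves this $\pibar_i$ action is a valid $\mathcal{H}_n(0)$-action (via the equivalent positional description and the remark that the commuting-relation proofs are analogous to Propositions \ref{prop: m squared}, \ref{prop: m length 2}, \ref{prop: m length 3}), so the hypothesis ``$\pibar_i$ is a valid $\mathcal{H}_n(0)$-action'' in Definition \ref{def: sqcc} is satisfied, and hence $\widehat{M}$ in the sense of that definition is precisely $\widehat{S^{\lambda}}$. One should also note that only the three listed cases occur on the basis: by Fact \ref{fact: posns in std tab}, if $T$ and $s_iT$ are both standard then $i,i+1$ are strictly northwest/southeast of each other, and the non-standard cases (same row, same column) are exactly the two cases handled by straightening, so no fourth possibility arises. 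Finally, Definition \ref{def: sqcc} declares $(M,\mathcal{B},\leq)$ SQCC precisely when $F_{char}^Q(\widehat{M}) = F_{char}(M)$, and Theorem \ref{thm: specht qsym char} gives exactly $F_{char}^Q(\widehat{S^{\lambda}}) = s_{\lambda} = F_{char}(S^{\lambda})$, completing the proof.

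The only mild subtlety — and the step I would treat most carefully — is the identification of $\mathrm{tt}(s_i e_T)$ in the shared-row case: one must be sure that when $s_i e_T = e_T \pm \sum_{S \vartriangleright T} c_S e_S$ the polytabloid $e_T$ genuinely appears with nonzero coefficient and is the $\vartriangleleft$-least term, so that the trailing term is $e_T$ and the first branch ($\pibar_i e_T = 0$) triggers rather than one of the others. This is immediate from the stated form of the straightening identity (the coefficient of $e_T$ is $+1$ and every other $e_S$ has $S \vartriangleright T$), but it is worth stating explicitly since it is the one place where the generic framework's use of $\mathrm{tt}$ rather than a plain comparison is doing real work. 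Everything else is bookkeeping, so the proof can be stated in a single short paragraph citing Theorem \ref{thm: specht qsym char} and Definition \ref{def: sqcc}.
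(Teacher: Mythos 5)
Your proposal is correct and follows the same route as the paper, which simply cites Theorem \ref{thm: specht qsym char} and Definition \ref{def: sqcc}; your additional verification that the generic $\pibar_i$ action of Definition \ref{def: sqcc} instantiates to the action of Definition \ref{def: specht hecke} (including the trailing-term check in the shared-row case) is detail the paper leaves implicit but does not contradict it.
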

\begin{proof}
    This follows from Theorem \ref{thm: specht qsym char} and Definition \ref{def: sqcc}.
\end{proof}

\begin{proposition}\label{prop: all sqcc}
    For every $\mathfrak{S}_n$-module $M$, there exists a basis $\mathcal{B}_M$ and a total ordering $\leq_M$ such that $(M, \mathcal{B}_M, \leq_M)$ is strongly quasisymmetric characteristic compatible.
\end{proposition}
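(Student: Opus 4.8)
The plan is to reduce to the Specht-module case of Theorem~\ref{thm: specht qsym char} by exploiting the semisimplicity of $\mathbb{C}[\mathfrak{S}_n]$. Fix an internal direct sum decomposition $M = S^{(1)}\oplus\cdots\oplus S^{(m)}$ into Specht submodules, with $S^{(j)}\cong S^{\lambda_j}$ for partitions $\lambda_1,\dots,\lambda_m\vdash n$. For each $j$, Theorem~\ref{thm: specht qsym char} together with Definition~\ref{def: sqcc} says that $S^{(j)}$, with basis $\mathcal{B}_j=\{e_T\mid T\in\text{SYT}(\lambda_j)\}$ and the ordering $\vartriangleleft$, is SQCC. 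I would then take $\mathcal{B}_M=\mathcal{B}_1\sqcup\cdots\sqcup\mathcal{B}_m$ and define $\leq_M$ by ordering the blocks $\mathcal{B}_1<\mathcal{B}_2<\cdots<\mathcal{B}_m$ (every vector of $\mathcal{B}_j$ before every vector of $\mathcal{B}_{j+1}$) and ordering within each block by $\vartriangleleft$, in the same orientation used in Theorem~\ref{thm: specht qsym char}.

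The first step is to check that $(M,\mathcal{B}_M,\leq_M)$ actually produces a module $\widehat M$, i.e.\ that the $\pibar_i$ of Definition~\ref{def: sqcc} is a valid $\mathcal{H}_n(0)$-action and that only the three listed forms of $s_i(v)$ occur on $\mathcal{B}_M$. The load-bearing observation is that $s_i$ acts block-diagonally on $M=\bigoplus_j S^{(j)}$: for $v\in\mathcal{B}_j$, the vector $s_i(v)$ is a linear combination of vectors from $\mathcal{B}_j$ alone, and $\leq_M$ restricted to $\mathcal{B}_j$ is exactly $\vartriangleleft$. Hence $\pibar_i(v)$ computed in $\widehat M$ coincides with $\pibar_i(v)$ computed in $\widehat{S^{(j)}}$; in particular only the three cases arise (since they do within each block), each $\text{span}(\mathcal{B}_j)$ is an $\mathcal{H}_n(0)$-submodule, and $\pibar_i|_{\widehat M}=\bigoplus_j\pibar_i|_{\widehat{S^{(j)}}}$. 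A direct sum of modules satisfying the $0$-Hecke relations satisfies them, so $\widehat M$ is a genuine $\mathcal{H}_n(0)$-module, isomorphic to $\widehat{S^{(1)}}\oplus\cdots\oplus\widehat{S^{(m)}}$, and the composition series of Definition~\ref{def: sqcc} is just the concatenation of the composition series of the $\widehat{S^{(j)}}$.

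Finally I would compute the quasisymmetric Frobenius characteristic. Since $F_{char}^Q$ is a homomorphism out of the Grothendieck group $\mathcal{G}_0(\mathcal{H}_n(0))$, it is additive on direct sums, so
$$F_{char}^Q(\widehat M)=\sum_{j=1}^m F_{char}^Q(\widehat{S^{(j)}})=\sum_{j=1}^m s_{\lambda_j}=\sum_{j=1}^m F_{char}(S^{\lambda_j})=F_{char}(M),$$
where the second equality is Theorem~\ref{thm: specht qsym char}, the third is $F_{char}(S^{\lambda})=s_{\lambda}$, and the last is additivity of $F_{char}$ over $M=\bigoplus_j S^{\lambda_j}$. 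Thus $(M,\mathcal{B}_M,\leq_M)$ is strongly quasisymmetric characteristic compatible.

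I do not anticipate a deep obstacle: the representation-theoretic content is already in Theorem~\ref{thm: specht qsym char}, and what remains is to show that gluing SQCC triples along a direct sum preserves SQCC. The one point needing genuine (if easy) care is the behaviour of $\leq_M$ across blocks: one must ensure that $s_i(v)$, for $v$ in one block, never reaches into another block, since otherwise a new case of the $\pibar_i$-action could appear and the nested spans of Definition~\ref{def: sqcc} might fail to be submodules. This is exactly what block-diagonality of the $s_i$-action guarantees, and it is why any total order keeping each block $\mathcal{B}_j$ contiguous suffices.
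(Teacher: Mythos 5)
Your proposal is correct and follows essentially the same route as the paper: decompose $M$ into Specht submodules via semisimplicity, transport the bases $\{e_T\mid T\in\mathrm{SYT}(\lambda)\}$ and the ordering $\vartriangleleft$ along the isomorphism so that the $s_i$-action is block-diagonal, and then conclude by additivity of $F_{char}^Q$ together with Theorem~\ref{thm: specht qsym char}. Your explicit verification that the $\pibar_i$-action stays within each block (so that only the three cases of Definition~\ref{def: sqcc} occur and the nested spans are submodules) is a point the paper glosses over with ``when $\widehat{M}$ is a $\mathcal{H}_n(0)$-module,'' so that extra care is welcome but not a departure in method.
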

\begin{proof}
        If $M$ is an $\mathfrak{S}_n$-module, then we can write $M\cong\oplus c_{\lambda}S^{\lambda}$.  Consider the ordering $\prec$ on the basis of $S^\lambda$ defined in Definition \ref{def: col dom poly}. Using the isomorphism, create a compatible basis $\mathcal{B}_M$ for $M$ from the bases $\{e_T|T\in \text{SYT}(\lambda)\}$ within an isomorphic copy of $S^\lambda$ such that within each submodule, the relative order of the basis elements in Definition \ref{def: col dom poly} are preserved.  Call this new order on the basis elements of $\leq_M$.     When $\widehat{M}$ is a $\mathcal{H}_n(0)$-module, we also have $\widehat{M}\cong\oplus c_{\lambda} \widehat{S^{\lambda}}$ as vector spaces. Then, we have the following: 
        \begin{align*}
            F_{char}^Q(\widehat{M}) &= F_{char}^Q\left(\oplus c_{\lambda}\widehat{S^{\lambda}}\right) \\
            &=\sum c_{\lambda}F_{char}^Q\left(\widehat{S^{\lambda}}\right) \\
            &= \sum c_{\lambda} F_{char}\left(S^{\lambda}\right) \\
            &=\sum c_{\lambda}s_{\lambda} \\
            &= F_{char}\left(\oplus c_{\lambda}S^{\lambda}\right) \\
            &=F_{char}(M)
        \end{align*}
\end{proof}

\begin{corollary}\label{cor: commuting diagram}
    Given an $\mathfrak{S}_n$-module $M$, let $\mathcal{B}_M$ and total ordering $\leq_M$ be as constructed in Proposition \ref{prop: all sqcc}. Let $f$ be a map defined on the representations of $\mathfrak{S}_n$ such that $f(M)=\widehat{M}$, and $\iota:\text{Sym}\rightarrow\text{QSym}$ denote the inclusion map. Then, the following diagram commutes:
\[
\begin{tikzcd} \mathcal{G}_0(\mathfrak{S}_n) \arrow[r, "F_{char}"] \arrow[d, dashed, "f"] & \text{Sym} \arrow[d, hookrightarrow, "\iota"] \\
    \mathcal{G}_0(\mathcal{H}_n(0)) \arrow[r, "F_{char}^Q"]             & \text{QSym}
\end{tikzcd}
\]
\end{corollary}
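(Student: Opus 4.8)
The plan is to deduce commutativity of the square directly from Proposition \ref{prop: all sqcc}, once we have checked that $f$ descends to a well-defined $\mathbb{Z}$-linear map on Grothendieck groups. The key point is that $\iota$ is an honest inclusion $\text{Sym}^n \hookrightarrow \text{QSym}^n$, so for any $\mathfrak{S}_n$-module $M$ the element $\iota(F_{char}(M))$ is simply $F_{char}(M)$ regarded inside $\text{QSym}^n$; there is nothing to compute here beyond Proposition \ref{prop: all sqcc}.

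First I would observe that the construction in the proof of Proposition \ref{prop: all sqcc} is compatible with direct sums. Writing $M \cong \bigoplus c_\lambda S^\lambda$, the basis $\mathcal{B}_M$ is assembled from the bases $\{e_T \mid T \in \text{SYT}(\lambda)\}$ of the Specht summands (with the relative order within each isomorphic copy of $S^\lambda$ inherited from $\vartriangleleft$), and the deformed action of Definition \ref{def: sqcc} preserves each such copy; hence $\widehat{M} \cong \bigoplus c_\lambda \widehat{S^\lambda}$ as $\mathcal{H}_n(0)$-modules. Therefore $[\widehat{M}] = \sum c_\lambda [\widehat{S^\lambda}]$ in $\mathcal{G}_0(\mathcal{H}_n(0))$ depends only on the multiplicities $c_\lambda$, i.e. only on $[M] = \sum c_\lambda [S^\lambda]$. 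Since $\mathbb{C}[\mathfrak{S}_n]$ is semisimple, every short exact sequence of $\mathfrak{S}_n$-modules splits, so this additivity is exactly what is needed to conclude that $f$ induces a well-defined $\mathbb{Z}$-linear map $\mathcal{G}_0(\mathbb{C}[\mathfrak{S}_n]) \to \mathcal{G}_0(\mathcal{H}_n(0))$ sending $[S^\lambda] \mapsto [\widehat{S^\lambda}]$.

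With that in place, both routes around the square are $\mathbb{Z}$-linear maps $\mathcal{G}_0(\mathbb{C}[\mathfrak{S}_n]) \to \text{QSym}^n$ (the map $F_{char}^Q$ is linear on $\mathcal{G}_0(\mathcal{H}_n(0))$ by its definition via composition series, and $\iota \circ F_{char}$ is linear), so it suffices to compare them on the $\mathbb{Z}$-basis $\{[S^\lambda]\}_{\lambda \vdash n}$. Going top-then-right gives $\iota(F_{char}(S^\lambda)) = \iota(s_\lambda) = s_\lambda$; going left-then-bottom gives $F_{char}^Q(f(S^\lambda)) = F_{char}^Q(\widehat{S^\lambda}) = s_\lambda$ by Theorem \ref{thm: specht qsym char}. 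Equivalently, and even more directly, for an arbitrary module $M$ one has $\iota(F_{char}(M)) = F_{char}(M) = F_{char}^Q(\widehat{M}) = F_{char}^Q(f(M))$ by Proposition \ref{prop: all sqcc}. Either way the diagram commutes. The only genuine work is the direct-sum compatibility of the second paragraph — confirming that the basis-and-ordering recipe of Proposition \ref{prop: all sqcc} respects the decomposition into Specht modules so that $\widehat{(\cdot)}$ is additive on isomorphism classes — and this amounts to unwinding that construction rather than facing a new obstacle.
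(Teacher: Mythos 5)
Your proof is correct and follows the same route the paper intends: the corollary is an immediate consequence of Proposition \ref{prop: all sqcc} together with the fact that $\iota$ is literally the inclusion, and the paper itself supplies no further argument. Your added care about $f$ descending to a well-defined $\mathbb{Z}$-linear map on Grothendieck groups (via additivity of $\widehat{(\cdot)}$ over the Specht decomposition) is a reasonable tightening of a point the paper leaves implicit, not a different approach.
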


This proposition and subsequent corollary imply that for every action on an $\mathfrak{S}_n$-module there exists a naturual deformed to a $\mathcal{H}_n(0)$-action with the same Frobenius image.
However, the specific deformation mentioned above may not always be easy to compute, due to the complexities of defining a 1-1 map between $S^{\lambda}$ and $M$. 

\section{The Coinvariant Algebra as a $\mathcal{H}_n(0)$-module}\label{section: coinv}

In this section, we are inspired by a broader motivation:
\begin{motivation} 
    Study the symmetric Frobenius image of $M$ by computing the quasisymmetric Frobenius image of $\widehat{M}$.
\end{motivation}  
 
\noindent Corollary \ref{cor: commuting diagram}, alas, does not address this motivation directly, since $\mathcal{B}_M$ and $\leq_M$ may not be natural on a generic module $M$. In fact, as the next example suggests, the action presented in Definition \ref{def: sqcc} may be too limiting if this is our goal, but a weaker condition may be more realistic. 

For our final example, we transition to defining a compatible $\mathcal{H}_n(0)$-action on a well-studied quotient space: the coinvariant algebra $\mathcal{R}_n$.  Before giving the formal definition, we briefly give some necessary background on Gr{\"o}bner bases in Section \ref{subsect: grobner basis} before returning to the example at hand in section \ref{subsect: coinvariant}.  

Let $\mathbb{C}[x_1, \dots, x_n]$ be the space of all functions in $n$ variables. Given a weak composition $\alpha\vDash_w d$, $x^{\alpha}\in\mathbb{C}[x_1, \dots, x_n]$ is the monomial $x_1^{\alpha_1}x_2^{\alpha_2}\cdots x_n^{\alpha_n}$, and we say that $x^{\alpha}$ has degree $d$.

\subsection{Gr{\"o}bner Bases}\label{subsect: grobner basis}

This subsection serves as a brief overview of Gr{\"o}bner bases and related results. We adopt the notation of Dummit and Foote~\cite{dummitFoote}, which gives a good introduction to the 1423ic for the unfamiliar reader. Following their notation, when the underlying ideal $I$ is obvious, for any $p\in \mathbb{C}[x_1, \dots, x_n]$, we use  $\overline{p}$ for the coset in $\faktor{\mathbb{C}[x_1,\dots,x_n]}{I}$ corresponding to $p$.

One difficulty with computations in quotient spaces is that elements of quotient spaces have multiple coset representatives and thus it may be difficult to determine if $\overline{p}$ equals $\overline{q}$ for two distinct elements $p,q \in \mathbb{C}[x_1, \dots, x_n]$.  In the case that $I$ is a nonzero ideal of $\mathbb{C}[x_1, \dots, x_n]$, Gr{\"o}bner bases are employed as a tool to overcome this obstacle when doing computations in  $Q=\faktor{\mathbb{C}[x_1, \dots, x_n]}{I}$.  In such a setting, we must first select a monomial ordering on $\mathbb{C}[x_1, \dots, x_n]$:

\begin{definition}
    A \textbf{monomial ordering} on $\mathbb{C}[x_1, \dots, x_n]$ is a total ordering such that, if $x^{\alpha}\leq x^{\beta}$ and $x^{\gamma}$ is any other monomial, then $x^{\alpha}x^{\gamma}\leq x^{\beta}x^{\gamma}$. 
\end{definition}

The lexicographic monomial ordering, explicitly defined below, is a widely used monomial ordering on $\mathbb{C}[x_1, \dots, x_n]$.

\begin{definition}\label{def: lex ordering}
    Given two monomials $x^\alpha$ and $x^{\beta}$, we say $x^{\alpha}< x^{\beta}$ in \textbf{lexicographic monomial ordering} if the first nonzero entry in the sequence $\beta-\alpha$ is strictly positive.
\end{definition}

With respect to the monomial ordering, we then have a ``first" monomial that occurs with nonzero coefficient in every polynomial, leading to two related definitions:
\begin{definition}
    Given a polynomial $p\in\mathbb{C}[x_1, \dots, x_n]$, let $$p=c_1x^{\alpha_1}+c_2x^{\alpha_2}+\cdots+c_mx^{\alpha_m}$$ with $x^{\alpha_1}>x^{\alpha_2}>\cdots>x^{\alpha_m}$ and $c_i\neq 0$ for all $i$. Then, the \textbf{leading term} of $p$, denoted $\text{lt}(p)$, is $c_1x^{\alpha_1}$ and the \textbf{leading monomial} of $p$, denoted $\text{lm}(p)$, is $x^{\alpha_1}$.
\end{definition}

Note that as defined monomial $\text{lt}(p)$ may have any nonzero coefficient, while the monomial $\text{lm}(p)$ will have a coefficient of 1.  

Gr{\"o}bner bases, misleadingly named as they are not actually bases of a vector space, are defined using ideals generated by leading terms of polynomials.

\begin{definition}
    Given a nonzero ideal $I$ in $\mathbb{C}[x_1, \dots, x_n]$, let $\text{lt}(I)$ be the ideal generated by the leading terms of polynomials in $I$:
    $$\text{lt}(I)=\langle \text{lt}(p)\mid p\in I\rangle$$
\end{definition}

\begin{definition}
    A \textbf{Gr{\"o}bner basis} $G=\{g_1, \dots, g_m\}$ of a nonzero ideal $I$ in the ring $\mathbb{C}[x_1, \dots, x_n]$ is a finite generating set of $I$ whose leading terms generate $\text{lt}(I)$. Thus, $$I=\langle g_1, \dots, g_m\rangle \text{ and } \text{lt}(I)=\langle \text{lt}(g_1), \dots, \text{lt}(g_m)\rangle.$$
\end{definition}

\begin{definition}
    A \textbf{reduced Gr{\"o}bner basis} $G=\{g_1, \dots, g_m\}$ of nonzero ideal $I$ in the ring $\mathbb{C}[x_1,\dots, x_n]$ is a Gr{\"o}bner basis where 
    \begin{itemize}
        \item $\text{lt}(g_i)$ has coefficient 1 for each $i$
        \item no monomial appearing in $g_j$ is divisible by $\text{lt}(g_i)$ when $i\neq j$. 
    \end{itemize}
\end{definition}

\begin{lemma}[Theorem 27 in \cite{dummitFoote}]
    Given a monomial ordering on $\mathbb{C}[x_1, \dots, x_n]$, there is a unique reduced Gr{\"o}bner basis for every nonzero ideal $I$ in $\mathbb{C}[x_1,\dots, x_n]$. 
\end{lemma}

By using a reduced Gr{\"o}bner basis $G=\{g_1,\dots, g_m\}$, we are able to define a division algorithm on $\mathbb{C}[x_1, \dots, x_n]$ that results in a unique remainder. Reminiscent of the Euclidean algorithm, if $p\in\mathbb{C}[x_1, \dots, x_n]$, then:
\begin{itemize}
    \item if $\text{lt}(p)$ is divisible by $\text{lt}(g_i)$ such that $\text{lt}(p)=a_i\text{lt}(g_i)$, add $a_i$ to the ``quotient" $q_i$, and replace $p$ by $p-a_ig_i$. Repeat this process.
    \item if $\text{lt}(p)$ is not divisible by $\text{lt}(g_i)$ for any $i$, add the leading term of $p$ to the ``remainder" $r$, and replace $p$ by $p-\text{lt}(p)$. Repeat this process.
\end{itemize}
This algorithm does terminate, resulting in $$p=q_1g_1+q_2g_2+\cdots+q_mg_m+r.$$

Moreover, when working only with the reduced Gr{\"o}bner basis $G$, the remainder of any element in $\mathbb{C}[x_1, \dots, x_n]$ depends only on $I$ and the choice of $\leq$. Otherwise, the remainder also depends on the choice of Gr{\"o}bner basis $G$.

\begin{lemma}[Theorem 23 in \cite{dummitFoote}]\label{lem: unique remainder}
    Let $G=\{g_1, \dots, g_m\}$ be a Gr{\"o}bner basis for a nonzero ideal $I$ in $\mathbb{C}[x_1, \dots, x_n]$. Then, every polynomial $p\in\mathbb{C}[x_1, \dots, x_n]$ can be written uniquely as $$p=f_I+r,$$ where $f_I\in I$ and no nonzero monomial term in $r$ is divisible by the leading terms of the polynomials in $G$. Moreover, the remainder $r$ is a unique representative in the coset of $p$ in $\faktor{\mathbb{C}[x_1, \dots, x_n]}{I}$.
\end{lemma}

\begin{definition}
    Let $G$ be a Gr{\"o}bner basis of nonzero ideal $I$ and $p\in\mathbb{C}[x_1,\dots,x_n]$, and write $p=f_I+r$ as given in Lemma \ref{lem: unique remainder}. If $f_I$ is a nonzero polynomial, then we say that $p$ is \textbf{reducible} by $G$. If $f_I=0$, then $p$ is \textbf{reduced} with respect to $G$.
\end{definition}

\begin{corollary}
    Let $I$ be a nonzero ideal in $\mathbb{C}[x_1, \dots, x_n]$ with Gr{\"o}bner basis $G$. Then, every coset $\overline{p}\in \faktor{\mathbb{C}[x_1, \dots, x_n]}{I}$ has a unique coset representative $\widetilde{p}\in\mathbb{C}[x_1, \dots, x_n]$ that is reduced with respect to $G$.
\end{corollary}
\begin{proof}
    By Lemma \ref{lem: unique remainder}, to find the unique representative, we need only apply the division algorithm to any coset representative.  
\end{proof}

Going forward, when the underlying ideal $I$ and ordering $\leq$ is obvious, we consider the unique reduced Gr{\"o}bner basis $G$. Then, we use $\widetilde{p}\in\mathbb{C}[x_1, \dots, x_n]$ to denote this unique coset representative of $\overline{p}$ that is reduced with respect to $G$.

\subsection{The Coinvariant Algebra $\mathcal{R}_n$}\label{subsect: coinvariant}

Next, we turn our attention to a well-studied action of $\mathfrak{S}_n$ and a compatible quotient space. Define an $\mathfrak{S}_n$-action on $\mathbb{C}[x_1, \dots, x_n]$ by $$s_i(x^{\alpha})=x^{s_i(\alpha)},$$ where $s_i(\alpha)=s_i((\alpha_1, \dots, \alpha_i, \alpha_{i+1}, \dots, \alpha_l))=(\alpha_1, \dots, \alpha_{i+1}, \alpha_{i}, \dots, \alpha_l)$.

\begin{definition} Let $S$ be the ideal $\langle e_1(x_1, \dots, x_n), \dots, e_n(x_1, \dots, x_n)\rangle$.  Then the \textbf{coinvariant algebra} $\mathcal{R}_n$ is the quotient space $\faktor{\mathbb{C}[x_1\dots, x_n]}{S}$. 
\end{definition}

Now, the ideal $S$ is invariant under the aforementioned $\mathfrak{S}_n$-action and thus one can consider the action on the quotient $\mathcal{R}_n$. In particular, $$s_i(\overline{x^{\alpha}})=\overline{x^{s_i(\alpha)}}$$ is thus a well-defined action on $\mathcal{R}_n$ that does not depend on the choice of coset representative.

For additional details on $\mathcal{R}_n$ and other well-known quotient spaces in this area, we recommend the unfamiliar reader see the excellent text by Bergeron \cite{bergeron}.

As with the previous $\mathcal{H}_n(0)$-modules studied here, an ordering will be used throughout this section. Here, we consider the lexicographic ordering on monomials defined in the previous subsection.

\begin{lemma}[See \cite{bergeron}]\label{lem: grobner basis}
With respect to lexicographic ordering $\leq$, the ideal $S$ has reduced Gr{\"o}bner basis $$G=\{g_i=h_i(x_i, \dots, x_n) : 1\leq i\leq n\}.$$
\end{lemma}

Artin \cite{artin} showed that the set of ``sub-staircase" monomials form a basis of $\mathcal{R}_n$. 

\begin{definition}\label{def: artin basis}
    Let $A_n=\{\alpha=(\alpha_1, \alpha_2, \dots, \alpha_n)\mid 0\leq\alpha_j< j\}$.
    Then, define the set of \textbf{sub-staircase monomials} as $$\mathcal{A}_n=\{\overline{x^{\alpha}}\mid \alpha\in A_n\}.$$
\end{definition}

\begin{lemma}\label{lem: basis of coinv}
$\mathcal{A}_n$ is a basis for $\mathcal{R}_n$, and $\text{dim}(\mathcal{R}_n)=n!$.
\end{lemma}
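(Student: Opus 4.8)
\textbf{Proof proposal for Lemma \ref{lem: basis of coinv}.}

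The plan is to prove both claims at once by the standard Gröbner-basis argument, using the fact (stated just above in the preceding lemma) that $\{g_i = h_i(x_i,\dots,x_n) : 1 \le i \le n\}$ is a Gröbner basis for the ideal $S$ with respect to the lexicographic order. First I would identify the leading monomial of each $g_i$: since $h_i(x_i,\dots,x_n)$ is a sum of all degree-$i$ monomials in $x_i,\dots,x_n$ and $x_i$ is the lexicographically largest of these variables, $\text{lm}(g_i) = x_i^i$. (This is exactly the computation carried out in the commented-out lemma in the excerpt, so I would either reinstate that argument or cite it.) Consequently the standard monomials for this Gröbner basis — those not divisible by any $\text{lm}(g_i) = x_i^i$ — are precisely the monomials $x_1^{\alpha_1}\cdots x_n^{\alpha_n}$ with $\alpha_i < i$ for all $i$, i.e.\ exactly the set $\mathcal{A}_n$ of sub-staircase monomials.

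Next I would invoke the general principle from Gröbner basis theory: for any ideal $S$ in a polynomial ring, the images of the standard monomials (the monomials not in the initial ideal $\mathrm{in}(S)$) form a vector-space basis of the quotient $\faktor{\mathbb{C}[x_1,\dots,x_n]}{S}$. Spanning holds because every polynomial reduces modulo the Gröbner basis to a $\mathbb{C}$-linear combination of standard monomials; linear independence holds because a nontrivial linear dependence among standard monomials in the quotient would be a nonzero element of $S$ whose leading monomial is a standard monomial, contradicting that the Gröbner basis' leading monomials generate $\mathrm{in}(S)$. Applying this with the standard monomials being $\mathcal{A}_n$ gives that $\mathcal{A}_n$ is a basis of $\mathcal{R}_n$.

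Finally, the dimension count: $|\mathcal{A}_n| = \prod_{j=1}^n |\{0,1,\dots,j-1\}| = \prod_{j=1}^n j = n!$, which finishes the proof. The only real content is the leading-monomial computation for $g_i$ and the appeal to the Gröbner-basis/standard-monomial dictionary; I expect the minor obstacle to be stating the latter at the right level of generality (it may be cleanest to simply cite \cite{bergeron} or a standard commutative algebra reference for the standard-monomial basis theorem, and reserve the written argument for the identification $\text{lm}(g_i) = x_i^i$ and the resulting description of $\mathcal{A}_n$ as the standard monomials).
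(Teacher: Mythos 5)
Your argument is correct and is exactly the route the paper's setup points to: the paper itself gives no written proof (it attributes the result to Artin and relies on the Gr\"obner basis $\{g_i\}$ stated in the preceding lemma), and your computation $\mathrm{lm}(g_i)=x_i^i$ together with the standard-monomial basis theorem is the intended justification. The dimension count $|\mathcal{A}_n|=n!$ is likewise fine, so there is nothing to fix.
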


We will use lexicographic ordering to define a $\mathcal{H}_n(0)$-action on $\mathcal{R}_n$. To do so, we will first need to analyze $\mathcal{R}_n$ as an $\mathfrak{S}_n$-module.

\begin{proposition}\label{prop: lt results}
Assume $\alpha\in A_n$. If $s_i(\alpha)\notin A_n$, then it must be that $\text{lm}(\widetilde{x^{s_i(\alpha)}})\geq x^{\alpha}$. If equality holds, then $\text{lt}(\widetilde{x^{s_i(\alpha)}})=-x^{\alpha}$.
\end{proposition}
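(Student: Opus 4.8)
The plan is to analyze what the Gröbner reduction of $x^{s_i(\alpha)}$ can look like, using the structure of the Gröbner basis $\{g_i = h_i(x_i,\dots,x_n)\}$ from the lemma above. First I would reduce to the case where $x^{s_i(\alpha)}$ is actually reducible, i.e.\ it is divisible by some leading monomial of the Gröbner basis. Since the Artin basis $\mathcal{A}_n$ consists of the monomials $x_1^{\alpha_1}\cdots x_n^{\alpha_n}$ with $\alpha_j < j$, and $x^\alpha$ itself is sub-staircase, the monomial $x^{s_i(\alpha)}$ fails to be sub-staircase exactly when the swap puts too large an exponent in position $i$; that is, the exponent in slot $i$ of $s_i(\alpha)$ is $\alpha_{i+1}$, and reducibility forces $\alpha_{i+1} \geq i$ (the other slots are unchanged and already sub-staircase, except slot $i+1$ now holds $\alpha_i < i < i+1$, which is fine). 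So the only obstruction is $x_i^{\alpha_{i+1}}$ with $\alpha_{i+1}\geq i$, and in particular $\alpha_{i+1} > \alpha_i$.

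Next I would use the leading-monomial fact: the leading monomial of $g_i = h_i(x_i,\dots,x_n)$ is $x_i^i$ (this is the content of the commented-out lemma; I would either cite it or reprove the one-line argument that $x_i^i$ dominates all monomials of $h_i(x_i,\dots,x_n)$ lexicographically). Thus the first reduction step replaces $x_i^i$ (times the cofactor $x_i^{\alpha_{i+1}-i}\cdot x_1^{\alpha_1}\cdots \widehat{x_i}\cdots$, i.e.\ the rest of $x^{s_i(\alpha)}$) by $-(g_i - x_i^i)$ times that same cofactor. Every monomial of $g_i - x_i^i$ is lexicographically strictly smaller than $x_i^i$, hence after multiplying by the fixed cofactor, every monomial produced is strictly smaller (in lex order) than $x^{s_i(\alpha)}$ itself. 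Iterating the reduction only ever produces monomials $\leq$ the largest monomial produced at the first step, so $\text{lm}(\widetilde{x^{s_i(\alpha)}}) \leq x_i^{\alpha_{i+1}}\cdot(\text{rest}) = x^{s_i(\alpha)}$ — wait, that is the wrong direction, so the real work is to show the leading monomial is at least $x^\alpha$; here I would argue that among the monomials appearing, the one obtained by taking the "$x_{i+1}$-part" of $g_i - x_i^i$, namely the term $x_i^{i-1}x_{i+1}$ inside $h_i$, contributes, after multiplication by the cofactor $x_i^{\alpha_{i+1}-i}\,x_1^{\alpha_1}\cdots x_{i-1}^{\alpha_{i-1}}x_{i+1}^{\alpha_i}x_{i+2}^{\alpha_{i+2}}\cdots$, precisely $\pm x_1^{\alpha_1}\cdots x_{i-1}^{\alpha_{i-1}} x_i^{\alpha_{i+1}-1} x_{i+1}^{\alpha_i+1}x_{i+2}^{\alpha_{i+2}}\cdots$, and I would track which surviving monomial is lexicographically largest; comparing exponent vectors slot by slot against $\alpha$ shows the dominant survivor is $\geq x^\alpha$, with equality occurring precisely when $\alpha_{i+1}=\alpha_i+1$ and the reduction terminates in one step, in which case the coefficient is the coefficient of $x_i^{i-1}x_{i+1}$ in $-h_i$, which is $-1$.

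The main obstacle I anticipate is the bookkeeping in the last step: controlling the leading monomial after a possibly multi-step reduction, and in particular ruling out cancellation among the various monomials of $(g_i - x_i^i)\cdot(\text{cofactor})$ that could wipe out the candidate leading term. I would handle this by choosing the reduction order carefully (always reduce the current leading monomial) and by a descent/induction on the lex order of the exponent vector: each reduction step strictly decreases the leading monomial of the remaining unreduced part, the set of monomials ever touched is bounded, and the specific monomial $x_1^{\alpha_1}\cdots x_{i-1}^{\alpha_{i-1}} x_i^{\alpha_{i+1}-1} x_{i+1}^{\alpha_i+1}x_{i+2}^{\alpha_{i+2}}\cdots$ has a strictly larger $x_{i+1}$-exponent than anything produced by reducing smaller monomials further down the chain, so it cannot be cancelled. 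Then a direct comparison of this monomial with $x^\alpha = x_1^{\alpha_1}\cdots x_i^{\alpha_i}x_{i+1}^{\alpha_{i+1}}\cdots$ — they first differ in slot $i$, where $\alpha_{i+1}-1 \geq \alpha_i$ since $\alpha_{i+1} > \alpha_i$ — gives $\text{lm}(\widetilde{x^{s_i(\alpha)}}) \geq x^\alpha$, with equality (forcing $\alpha_{i+1}-1=\alpha_i$ and single-step termination) yielding leading coefficient $-1$, as claimed.
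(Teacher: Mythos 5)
Your proposal is correct and follows essentially the same route as the paper's proof: reducibility forces the new slot-$i$ exponent to be $i$, one reduces by $g_i$ (with leading monomial $x_i^i$), and the term $x_i^{i-1}x_{i+1}$ of $h_i$ produces the candidate leading monomial $-x_1^{\alpha_1}\cdots x_{i-1}^{\alpha_{i-1}}x_i^{i-1}x_{i+1}^{\alpha_i+1}\cdots x_n^{\alpha_n}$, which is then compared slot by slot with $x^{\alpha}$, with equality exactly when $\alpha_i=i-1$. The only difference is that you spend more effort ruling out cancellation; the lex-descent observation you give (each reduction step only introduces strictly smaller monomials) suffices for this and is what the paper uses, phrased as preservation of the prefix $x_1^{\alpha_1}\cdots x_i^{i-1}$, so the auxiliary claim about $x_{i+1}$-exponents is unnecessary.
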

\begin{proof}
We write $$x^{\alpha}=x_1^{\alpha_1}\cdots x_i^{\alpha_i}x_{i+1}^{\alpha_{i+1}}\cdots x_n^{\alpha_n}$$ and $$x^{s_i(\alpha)}=x_1^{\alpha_1}\cdots x_i^{\alpha_{i+1}}x_{i+1}^{\alpha_{i}}\cdots x_n^{\alpha_n}.$$
\setcounter{case}{0}
Since we assume that $x^{s_i(\alpha)}$ is reducible by $G$ and $\alpha\in A_n$, it must be that $\alpha_{i+1}=i$ as a result of Lemma \ref{lem: basis of coinv}. Thus, we can rewrite $$x^{s_i(\alpha)}=x_1^{\alpha_1}\cdots x_i^ix_{i+1}^{\alpha_{i}}\cdots x_n^{\alpha_n}.$$

Now, in this way, it is easy to see that $x^{s_i(\alpha)}$ is reducible by $g_i$ specifically. In particular, the leading term of $g_i$ is $x_i^i$ in lexicographic order. As such, we may reduce $x^{s_i(\alpha)}$ by $g_i$, resulting in a new polynomial $p(x_1,\dots,x_n)$ where 
$$\text{lt}(p(x_1,\dots,x_n))=m(x_1, \dots, x_n)=-x_1^{\alpha_1}\cdots x_{i-1}^{\alpha_{i-1}}x_i^{i-1}x_{i+1}^{\alpha_{i}+1}x_{i+2}^{\alpha_{i+2}}\cdots x_n^{\alpha_n}.$$ 
We aim to show that, when fully reduced, $m(x_1,\dots,x_n)$ remains the leading term of $\widetilde{x^{s_i(\alpha)}}$ after successive reductions.

For further reduction, we note there may be monomials of the form $x_1^{\alpha_1}\cdots x_{k}^{\alpha_k+r}\cdots x_n^{\alpha_n}$, $k>i$ and $r>0$, which must be reduced by $g_k$. Since $g_k=h_k(x_k, \dots, x_n)$, we note that with the reduction, any new terms continue to have the same prefix $x_1^{\alpha_1}\cdots x_{i-1}^{\alpha_{i-1}}x_i^{i-1}$. Thus, 
$$\text{lt}(\widetilde{x^{s_i(\alpha)}})=m(x_1,\dots,x_n).$$

If $\alpha_i=i-1$, then $\alpha_i+1=i=\alpha_{i+1}$. Thus, $m(x_1,\dots, x_n)=-x^{\alpha}$. Otherwise, if $\alpha_i<i-1$, we have $\text{lm}(\widetilde{x^{s_i(\alpha)}})>x^{\alpha}$.
\end{proof}

It is a result of Chevalley \cite{chevalley} that, with this $\mathfrak{S}_n$-action, we have the left regular representation, so that $$F_{char}(\mathcal{R}_n)=\sum\limits_{\lambda\vdash n} f^{\lambda}s_{\lambda}.$$

If we view $\mathcal{R}_n$ as a \textit{graded} $\mathfrak{S}_n$-module, we have from Lusztig, formalized by Stanley in \cite{grFrob}, that $$F_{char}(\mathcal{R}_n;q)=\sum\limits_{\lambda\vdash n}\sum\limits_{T\in\text{SYT}(\lambda)} q^{\text{maj}(T)}s_{\lambda}.$$ We will notice that, at $q=1$, we obtain $F_{char}(\mathcal{R}_n)$ as an ungraded $\mathfrak{S}_n$-module. As such, we will work with the \textit{graded} quasisymmetric Frobenius characteristic of $\mathcal{R}_n$, where, for monomial rings, we have
$$F_{char}^Q(\langle x^{\alpha}\rangle;q)=q^{\text{deg}(x^{\alpha})}F_{\{i\mid \pibar_i (x^{\alpha})=-x^{\alpha}\}}.$$

Next, in the style of how we defined the $\mathcal{H}_n(0)$-action on the Specht modules, let $\widehat{\mathcal{R}_n}$ be the vector space $\mathcal{R}_n$ with the goal of defining a valid $\mathcal{H}_n(0)$-action. In particular, we define an $\mathcal{H}_n(0)$-action on $\widehat{\mathcal{R}_n}$. In particular, we assume $\alpha\in A_n$ so that the action is defined on a basis for $\mathcal{R}_n$ and can be extended linearly:

\[
\pibar_i(\overline{x^{\alpha}})=
\begin{cases}
        0 &\text{if } x^{s_i\alpha}=x^{\alpha} \\    
        -\overline{x^{\alpha}} &\text{if } \text{lm}(\widetilde{x^{s_i\alpha}})> x^{\alpha} \text{ or } \text{lt}(\widetilde{x^{s_i\alpha}})=-x^{\alpha} \\
        \overline{x^{s_i(\alpha)}} &\text{if } x^{s_i\alpha}<x^{\alpha}
    \end{cases}
\]

By Proposition \ref{prop: lt results}, these are the only cases to consider on $\widehat{\mathcal{R}_n}$. By this same proposition, we know that if $x^{\alpha}>\text{lm}(\widetilde{x^{s_i(\alpha)}})$, then $x^{s_i(\alpha)}$ is not reducible by $G$, which is why the leading term notation is omitted in the third case above. If $x^{s_i(\alpha)}<x^{\alpha}$, then we must have $\alpha_{i}>\alpha_{i+1}$. Similarly, if $x^{s_i(\alpha)}=x^{\alpha}$, then $\alpha_i=\alpha_{i+1}$. This tells us that we can rewrite our $\mathcal{H}_n(0)$-action on $\widehat{\mathcal{R}_n}$ as follows, again assuming that $\alpha\in A_n$:

\[
    \pibar_i(\overline{x^{\alpha}}) = 
    \begin{cases}
        0 &\text{if } \alpha_i=\alpha_{i+1} \\
        -\overline{x^{\alpha}} &\text{if } \alpha_i<\alpha_{i+1} \\    
        \overline{x^{s_i(\alpha)}} &\text{if } \alpha_i>\alpha_{i+1}
    \end{cases}
\]

Now, we'll show that $\widehat{\mathcal{R}_n}$ is a valid $\mathcal{H}_n(0)$-module using this easier action. The proof is by contradiction. To reduce the number of cases, we will assume that $\overline{x^{\alpha}}$ is a counterexample to the length three commuting relations at positions $j, j+1, j+2$, and show that in fact this counterexample must occur at positions 3, 4, 5 (or earlier).

\begin{lemma}\label{lem: length 3 counter}
Let $j, \overline{x^{\alpha}}$ be a counterexample to the length three commuting relations. In particular, suppose $$\pibar_j\pibar_{j+1}\pibar_j(\overline{x^{\alpha}})\neq\pibar_{j+1}\pibar_j\pibar_{j+1}(\overline{x^{\alpha}}).$$
Then, there exists a non-distinct list $\beta_3, \beta_4, \beta_5\in\{0,1,2\}$ such that $$\pibar_3\pibar_4\pibar_3(\overline{x_3^{\beta_3}x_4^{\beta_4}x_5^{\beta_5}})\neq\pibar_4\pibar_3\pibar_4(\overline{x_3^{\beta_3}x_4^{\beta_4}x_5^{\beta_5}}).$$
\end{lemma}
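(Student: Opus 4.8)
The plan is to reduce an arbitrary counterexample to a ``universal'' small one by exploiting the fact that the $\pibar_i$-action on a monomial $x^\alpha$ depends only on the comparisons $\alpha_i$ vs.\ $\alpha_{i+1}$, and that applying $\pibar_j,\pibar_{j+1}$ only ever touches the three coordinates $\alpha_j,\alpha_{j+1},\alpha_{j+2}$ (all other coordinates are inert, and the operators never create or destroy a reduction outside these slots, since the action reduces to the ``easy'' form with no Gröbner reduction at all). So the first step is to observe that both sides of the length-three relation at position $j$ are supported on monomials that agree with $x^\alpha$ outside coordinates $j,j+1,j+2$, and the coefficient pattern is determined entirely by the restriction $(\alpha_j,\alpha_{j+1},\alpha_{j+2})$ together with the partial order these three integers carry among themselves (equal / strictly less / strictly greater in each pair). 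Hence a counterexample at position $j$ persists after we delete all other coordinates and relabel, giving a counterexample on $\mathbb{C}[x_j,x_{j+1},x_{j+2}]/(\text{symmetrics})$ with exponent vector $(\alpha_j,\alpha_{j+1},\alpha_{j+2})$.

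Next I would shift the triple down to positions $3,4,5$: the operators $\pibar_3,\pibar_4$ acting on $x_3^{a}x_4^{b}x_5^{c}$ in $\widehat{\mathcal R_5}$ follow exactly the same case distinction (since the rewritten action depends only on the comparisons $a$ vs.\ $b$ and $b$ vs.\ $c$), so a counterexample with exponent triple $(a,b,c)$ at positions $j,j+1,j+2$ transports verbatim to one at positions $3,4,5$ --- provided $(a,b,c)$ is a legal exponent vector in $\mathcal A_5$, i.e.\ $a\le 2$, $b\le 3$, $c\le 4$. The remaining and only real work is to replace $(a,b,c)$ by a triple $(\beta_3,\beta_4,\beta_5)\in\{0,1,2\}^3$ realizing the same pairwise comparison pattern: there are only $13$ possible weak-order patterns on three entries, and each is realized by some triple in $\{0,1,2\}^3$ (e.g.\ $a<b=c$ by $(0,1,1)$, $a<b<c$ by $(0,1,2)$, $a=c<b$ by $(0,1,0)$, etc.). Since the action, and therefore both sides of the length-three relation, are functions of this pattern alone, the inequality is preserved.

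The main obstacle --- and the place where care is actually needed --- is justifying the very first step: that the $\pibar$-operators genuinely do not interfere with coordinates outside $\{j,j+1,j+2\}$ \emph{and} do not produce any Gröbner reduction. Using the second, ``easy'' form of the action this is clean, because that form involves no reduction: $\pibar_i(x^\alpha)$ is either $0$, $-x^\alpha$, or the genuine monomial $x^{s_i\alpha}$, and $s_i\alpha$ changes only coordinates $i,i+1$. One must check that $s_i\alpha\in\mathcal A_n$ whenever the third case applies, which holds precisely because that case requires $\alpha_i>\alpha_{i+1}$, so the swapped exponents still satisfy the sub-staircase bounds --- this is exactly the content of Proposition~\ref{prop: lt results} that makes the two forms of the action agree. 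Granting that, the restriction-and-shift argument is purely combinatorial and the lemma follows. I would write it as: (i) both sides of the relation only rescale $x^\alpha$ or move it to $x^{\sigma\alpha}$ for $\sigma$ a product of $s_j,s_{j+1}$, hence agree off coordinates $j,j+1,j+2$; (ii) the coefficients and the permutation $\sigma$ depend only on the weak order type of $(\alpha_j,\alpha_{j+1},\alpha_{j+2})$; (iii) every weak order type on three elements is realized inside $\{0,1,2\}^3$ at positions $3,4,5$, which is a valid exponent vector; conclude.
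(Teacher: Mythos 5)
Your proposal is correct and follows essentially the same route as the paper: observe that the $\pibar_i$-action depends only on the pairwise comparisons of exponents, so the weak order type of $(\alpha_j,\alpha_{j+1},\alpha_{j+2})$ determines both sides of the relation, and every such type is realizable by a triple in $\{0,1,2\}^3$ placed at positions $3,4,5$. Your added care about the absence of Gröbner reduction in the swap case and the inertness of the other coordinates is a slightly more explicit justification of what the paper's proof takes for granted, but it is the same argument.
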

\begin{proof}

Note that the action of $\pibar_i$ on a monomial is dependent only on the relative order of the exponents themselves, and not the values of the exponents. Thus, $\pibar_j\pibar_{j+1}\pibar_j$ and $\pibar_{j+1}\pibar_j\pibar_{j+1}$ act on $\overline{x^{\alpha}}$, giving either 0 or a permutation of the exponents in positions $j, j+1$, and $j+2$, which depends only on the relative sizes of $\alpha_j, \alpha_{j+1}$, and $\alpha_{j+2}$. Thus, if  $$\pibar_j\pibar_{j+1}\pibar_j(\overline{x^{\alpha}})\neq\pibar_{j+1}\pibar_j\pibar_{j+1}(\overline{x^{\alpha}}),$$ then it must be that if $\beta_3, \beta_4$ and $\beta_5$ are chosen to have the same relative order amongst themselves that $\alpha_j$, $\alpha_{j+1}$, and $\alpha_{j+2}$ have, acting by $\pibar_3\pibar_4\pibar_3$ and $\pibar_4\pibar_3\pibar_4$ will result in either 0 or an analogous permutation of $\beta_3, \beta_4$, and $\beta_5$ on the exponents of $\overline{x^{\beta}}$, so $$\pibar_3\pibar_4\pibar_3(\overline{x_3^{\beta_3}x_4^{\beta_4}x_5^{\beta_5}})\neq\pibar_4\pibar_3\pibar_4(\overline{x_3^{\beta_3}x_4^{\beta_4}x_5^{\beta_5}}).$$ Thus, we need only ensure that all relative orders of a triple $(\alpha_j, \alpha_{j+1}, \alpha_{j+2})$ may be achieved in the 3rd, 4th, and 5th exponents. Since the possible exponents of $x_3, x_4$, and $x_5$ include 0, 1, and 2, any relative order of three exponents can be achieved.
\end{proof}

\begin{example}
    For $n=9$, let $x^{\alpha}=x_7^5x_8^2x_9^6$. Then, we choose $x^{\beta}=x_3^1x_4^0x_5^2$, since $\alpha_8<\alpha_7<\alpha_9$ and $\beta_4<\beta_3<\beta_5$, where the indices are now shifted downwards by 4. Note that $\pibar_7\pibar_8\pibar_7(\overline{x^{\alpha}})=\overline{x_7^2x_8^5x_9^6}=s_7(\overline{x^{\alpha}})$ and $\pibar_3\pibar_4\pibar_3(\overline{x^{\beta}})=\overline{x_3^0x_4^1x_5^2}=s_{(7-4)}(\overline{x^{\beta}})$. Thus, if $\overline{x^{\alpha}}$ were a counterexample to the commuting relations, then $\overline{x^{\beta}}$ would be as well.
\end{example}

\begin{lemma}\label{lem: r length 3}
With the $\pibar_i$ action defined on $\widehat{\mathcal{R}_n}$, let $\alpha\in A_n$. Then, we have $$\pibar_3\pibar_4\pibar_3(\overline{x_1^{\alpha_1}\cdots x_3^{\alpha_3}x_4^{\alpha_4}x_5^{\alpha_5}\cdots x_n^{\alpha_n}})=\pibar_4\pibar_3\pibar_4(\overline{x_1^{\alpha_1}\cdots x_3^{\alpha_3}x_4^{\alpha_4}x_5^{\alpha_5}\cdots x_n^{\alpha_n}}).$$
\end{lemma}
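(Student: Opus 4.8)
The plan is to reduce the identity to a finite combinatorial check and then verify it case by case. The first observation is that the operators $\pibar_3\pibar_4\pibar_3$ and $\pibar_4\pibar_3\pibar_4$ only ever read and rearrange the exponents in positions $3,4,5$: $\pibar_3$ compares $\alpha_3$ with $\alpha_4$, while $\pibar_4$ compares $\alpha_4$ with $\alpha_5$, and in the simplified form of the action (the one expressed via $\alpha_i=\alpha_{i+1}$, $\alpha_i<\alpha_{i+1}$, $\alpha_i>\alpha_{i+1}$) each application either kills the monomial, multiplies it by $-1$, or transposes two of these three exponents, always leaving $\alpha_1,\alpha_2,\alpha_6,\dots,\alpha_n$ untouched. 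Thus those exponents are spectators, and — exactly as in the proof of Lemma \ref{lem: length 3 counter} — the result depends only on the \emph{relative order} of the triple $(\alpha_3,\alpha_4,\alpha_5)$. Since $0\le\alpha_j<j$ lets each of $\alpha_3,\alpha_4,\alpha_5$ take any value in $\{0,1,2\}$, every relative-order type is realized by some triple in $\{0,1,2\}^3$, and it suffices to check those thirteen types.

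Next I would organize the verification into three families. First, if $\alpha_3=\alpha_4=\alpha_5$, then $\pibar_3$ kills $x^{\alpha}$ and so does $\pibar_4$, so both sides equal $0$ immediately. Second, if exactly two of the exponents are equal, then following either operator word one reaches a monomial in which the two operator-relevant exponents agree, whereupon that side collapses to $0$; a short calculation, tracking only whether each step is a kill, a negation, or a transposition, shows the other side is $0$ as well. Third, if $\alpha_3,\alpha_4,\alpha_5$ are pairwise distinct, then no step is ever a kill, so each of $\pibar_3\pibar_4\pibar_3$ and $\pibar_4\pibar_3\pibar_4$ outputs a single monomial, obtained from $x^{\alpha}$ by a product of transpositions of positions $3,4,5$ together with a sign recording how many of the three steps were ascents ($\alpha_i<\alpha_{i+1}$). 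The underlying permutations agree because this is precisely the braid relation $s_3s_4s_3=s_4s_3s_4$ among the transpositions of positions $3,4,5$, and one checks in each of the six linear orders that the two accumulated signs also match.

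The work is entirely bookkeeping; the one place to be careful is the sign, since an ascent contributes a factor $-1$ while a descent contributes a transposition, and the pattern of ascents and descents differs along the two operator words, so the signs must be tracked independently on each side rather than assumed equal by symmetry. The potential worry that an intermediate monomial could fall outside the Artin basis and require further reduction does not arise: transposing a descent $\alpha_i>\alpha_{i+1}$ in a sub-staircase monomial yields a sub-staircase monomial, because the larger exponent $\alpha_i$ satisfies $\alpha_i<i<i+1$ and so is admissible in position $i+1$, while the smaller $\alpha_{i+1}$ is admissible in position $i$. This is exactly why the simplified form of the $\pibar_i$ action is legitimate to use here, and with it in hand the thirteen cases are routine.
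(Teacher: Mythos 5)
Your proposal is correct and follows essentially the same route as the paper: reduce to the relative order of $(\alpha_3,\alpha_4,\alpha_5)$ and verify the thirteen weak-order types directly (the paper exhibits only two representative cases and declares the rest similar). Your additional remarks --- organizing the check into the three families, invoking the braid relation for the sign bookkeeping in the all-distinct case, and noting that transposing a descent preserves the sub-staircase condition so no Gr\"obner reduction intervenes --- are sound refinements of the same argument rather than a different approach.
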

\begin{proof}
The numerous cases, where we consider all relative orders of $\alpha_3, \alpha_4$, and $\alpha_5$, are similar in difficulty, so we give two specific cases here.

\begin{case}
    Suppose $\alpha_3<\alpha_5<\alpha_4$.
    \begin{align*}
        \pibar_3\pibar_4\pibar_3(\overline{x_1^{\alpha_1}\cdots x_3^{\alpha_3}x_4^{\alpha_4}x_5^{\alpha_5}\cdots x_n^{\alpha_n}}) &= -\pibar_3\pibar_4(\overline{x_1^{\alpha_1}\cdots x_3^{\alpha_3}x_4^{\alpha_4}x_5^{\alpha_5}\cdots x_n^{\alpha_n}}) \\
        &= -\pibar_3(\overline{x_1^{\alpha_1}\cdots x_3^{\alpha_3}x_4^{\alpha_5}x_5^{\alpha_4}\cdots x_n^{\alpha_n}}) \\
        &= \overline{x_1^{\alpha_1}\cdots x_3^{\alpha_3}x_4^{\alpha_5}x_5^{\alpha_4}\cdots x_n^{\alpha_n}} \\
        \pibar_4\pibar_3\pibar_4(\overline{x_1^{\alpha_1}\cdots x_3^{\alpha_3}x_4^{\alpha_4}x_5^{\alpha_5}\cdots x_n^{\alpha_n}}) &= \pibar_4\pibar_3(\overline{x_1^{\alpha_1}\cdots x_3^{\alpha_3}x_4^{\alpha_5}x_5^{\alpha_4}\cdots x_n^{\alpha_n}}) \\
        &= -\pibar_4(\overline{x_1^{\alpha_1}\cdots x_3^{\alpha_3}x_4^{\alpha_5}x_5^{\alpha_4}\cdots x_n^{\alpha_n}}) \\
        &= \overline{x_1^{\alpha_1}\cdots x_3^{\alpha_3}x_4^{\alpha_5}x_5^{\alpha_4}\cdots x_n^{\alpha_n}}
    \end{align*}
\end{case}

\begin{case}
    Now, assume $\alpha_4<\alpha_5=\alpha_3$.
    \begin{align*}
        \pibar_3\pibar_4\pibar_3(\overline{x_1^{\alpha_1}\cdots x_3^{\alpha_3}x_4^{\alpha_4}x_5^{\alpha_5}\cdots x_n^{\alpha_n}}) &= \pibar_3\pibar_4(\overline{x_1^{\alpha_1}\cdots x_3^{\alpha_4}x_4^{\alpha_3}x_5^{\alpha_5}\cdots x_n^{\alpha_n}}) \\
        &= \pibar_3(0) \\
        &= 0 \\
        \pibar_4\pibar_3\pibar_4(\overline{x_1^{\alpha_1}\cdots x_3^{\alpha_3}x_4^{\alpha_4}x_5^{\alpha_5}\cdots x_n^{\alpha_n}}) &= -\pibar_4\pibar_3(\overline{x_1^{\alpha_1}\cdots x_3^{\alpha_3}x_4^{\alpha_4}x_5^{\alpha_5}\cdots x_n^{\alpha_n}}) \\
        &= -\pibar_4(\overline{x_1^{\alpha_1}\cdots x_3^{\alpha_4}x_4^{\alpha_3}x_5^{\alpha_5}\cdots x_n^{\alpha_n}}) \\
        &= 0
    \end{align*}
\end{case}
\end{proof}

\begin{proposition}
$\widehat{\mathcal{R}_n}$ is a $\mathcal{H}_n(0)$-module under the previously defined $\pibar_i$ action.
\end{proposition}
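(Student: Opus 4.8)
The plan is to verify the three defining relations of $\mathcal{H}_n(0)$ for the $\pibar_i$ action on $\widehat{\mathcal{R}_n}$, working throughout with the simplified description
\[
\pibar_i(x^{\alpha}) =
\begin{cases}
0 & \alpha_i=\alpha_{i+1}\\
-x^{\alpha} & \alpha_i<\alpha_{i+1}\\
x^{s_i(\alpha)} & \alpha_i>\alpha_{i+1},
\end{cases}
\]
which has already been shown equivalent to the original definition, and noting that in the third case $x^{s_i(\alpha)}$ again lies in the Artin basis $\mathcal{A}_n$ (if $\alpha_i>\alpha_{i+1}$ and $0\le\alpha_i<i$, then the swapped exponents still satisfy the sub-staircase bounds), so the action is well defined on $\widehat{\mathcal{R}_n}$.

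First I would check $\pibar_i^2=-\pibar_i$ by splitting into the three cases on the relative order of $\alpha_i$ and $\alpha_{i+1}$, exactly as in Proposition \ref{prop: m squared}: when they are equal the square is $0$; when $\alpha_i<\alpha_{i+1}$ we get $\pibar_i(-x^\alpha)=x^\alpha=-\pibar_i(x^\alpha)$; and when $\alpha_i>\alpha_{i+1}$ the monomial $x^{s_i(\alpha)}$ has a strict descent at position $i$, so a second application returns $-x^{s_i(\alpha)}$. Next, the far-commutation relation $\pibar_i\pibar_j=\pibar_j\pibar_i$ for $|i-j|\ge 2$ is immediate, as in Proposition \ref{prop: m length 2}: $\pibar_i$ reads and alters only the exponents in positions $i,i+1$ while $\pibar_j$ reads and alters only those in positions $j,j+1$, and these index sets are disjoint, so the two operators act independently.

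The substantive relation is the braid relation $\pibar_i\pibar_{i+1}\pibar_i=\pibar_{i+1}\pibar_i\pibar_{i+1}$, and here I would simply assemble the two lemmas already in hand. Lemma \ref{lem: length 3 counter} shows that any counterexample $(j,x^{\alpha})$ to the length-three relation forces a counterexample at positions $3,4,5$ for a monomial $x_3^{\beta_3}x_4^{\beta_4}x_5^{\beta_5}$ with $\beta_3,\beta_4,\beta_5\in\{0,1,2\}$, since the action depends only on the relative order of the three relevant exponents and every relative order of three values is realizable inside the sub-staircase bounds at positions $3,4,5$. But Lemma \ref{lem: r length 3} establishes $\pibar_3\pibar_4\pibar_3=\pibar_4\pibar_3\pibar_4$ on all monomials by checking every relative order of $\alpha_3,\alpha_4,\alpha_5$. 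Taking the contrapositive of Lemma \ref{lem: length 3 counter} then shows no counterexample exists at any triple of consecutive positions, which is the braid relation. Combining the three verified relations shows $\widehat{\mathcal{R}_n}$ is an $\mathcal{H}_n(0)$-module.

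The only real obstacle is the bookkeeping in the braid relation, and that has been isolated into Lemma \ref{lem: r length 3}; the potential pitfall there is ensuring the case analysis on $(\alpha_3,\alpha_4,\alpha_5)$ is genuinely exhaustive (all $13$ weak orders on three elements, including ties), but once those are settled the present proposition is just the assembly of the pieces and requires no new computation.
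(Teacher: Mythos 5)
Your proposal is correct and follows essentially the same route as the paper: verify $\pibar_i^2=-\pibar_i$ by the three cases on the relative order of $\alpha_i,\alpha_{i+1}$, observe far commutation is immediate from disjointness of the affected positions, and reduce the braid relation to Lemma \ref{lem: length 3 counter} together with Lemma \ref{lem: r length 3}. Your added observations (well-definedness on the Artin basis and the count of $13$ weak orders to check in Lemma \ref{lem: r length 3}) are accurate but do not change the argument.
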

\begin{proof}
To show that $\widehat{\mathcal{R}_n}$ is a $\mathcal{H}_n(0)$-action, we need only show that the commuting relations hold.

First, consider $\pibar_i^2(\overline{x_1^{\alpha_1}\cdots x_i^{\alpha_i}x_{i+1}^{\alpha_{i+1}}\cdots x_n^{\alpha_n}})$. There are three cases, and we work through each. 

\setcounter{case}{0}
\begin{case}
    Assume that $\alpha_i>\alpha_{i+1}$. Then, 
\begin{align*}
    \pibar_i^2(\overline{x_1^{\alpha_1}\cdots x_i^{\alpha_i}x_{i+1}^{\alpha_{i+1}}\cdots x_n^{\alpha_n}}) &= \pibar_i(\overline{x_1^{\alpha_1}\cdots x_i^{\alpha_{i+1}}x_{i+1}^{\alpha_{i}}\cdots x_n^{\alpha_n}}) \\
    &= -\overline{x_1^{\alpha_1}\cdots x_i^{\alpha_{i+1}}x_{i+1}^{\alpha_{i}}\cdots x_n^{\alpha_n}}) \\
    &= -\pibar_i(\overline{x_1^{\alpha_1}\cdots x_i^{\alpha_i}x_{i+1}^{\alpha_{i+1}}\cdots x_n^{\alpha_n}})
\end{align*}
\end{case}

\begin{case}
    Assume that $\alpha_i<\alpha_{i+1}$. Then, 
\begin{align*}
    \pibar_i^2(\overline{x_1^{\alpha_1}\cdots x_i^{\alpha_i}x_{i+1}^{\alpha_{i+1}}\cdots x_n^{\alpha_n}}) &= \pibar_i(-\overline{x_1^{\alpha_1}\cdots x_i^{\alpha_{i}}x_{i+1}^{\alpha_{i+1}}\cdots x_n^{\alpha_n}}) \\
    &= \overline{x_1^{\alpha_1}\cdots x_i^{\alpha_{i+1}}x_{i+1}^{\alpha_{i}}\cdots x_n^{\alpha_n}}) \\
    &= -\pibar_i(\overline{x_1^{\alpha_1}\cdots x_i^{\alpha_i}x_{i+1}^{\alpha_{i+1}}\cdots x_n^{\alpha_n}})
\end{align*}
\end{case}

\begin{case}
    Assume that $\alpha_i=\alpha_{i+1}$. Then, 
\begin{align*}
    \pibar_i^2(\overline{x_1^{\alpha_1}\cdots x_i^{\alpha_i}x_{i+1}^{\alpha_{i+1}}\cdots x_n^{\alpha_n}}) &= \pibar_i(0) \\
    &= 0 \\
    &= -\pibar_i(\overline{x_1^{\alpha_1}\cdots x_i^{\alpha_i}x_{i+1}^{\alpha_{i+1}}\cdots x_n^{\alpha_n}})
\end{align*}
\end{case}

Now, we consider $\pibar_i\pibar_j(\overline{x_1^{\alpha_1}\cdots x_n^{\alpha_n}})$, where $|i-j|\geq 2$. However, since $|i-j|\geq 2$, $\alpha_i, \alpha_{i+1}$ and $\alpha_j, \alpha_{j+1}$ have no direct impact on the actions of $\pibar_j$ and $\pibar_i$, respectively.

Finally, to show that $\pibar_i\pibar_{i+1}\pibar_i(\overline{x_1^{\alpha_1}\cdots x_n^{\alpha_n}})=\pibar_{i+1}\pibar_i\pibar_{i+1}(\overline{x_1^{\alpha_1}\cdots x_n^{\alpha_n}})$, we note that any counterexample to this commuting relation must also be a counterexample if $i=3$ by Lemma \ref{lem: length 3 counter}. Thus, we need only consider the case when $i=3$, but then we are done by Lemma \ref{lem: r length 3}.

Therefore, $\widehat{\mathcal{R}_n}$ is a $\mathcal{H}_n(0)$-module under the previously defined $\pibar_i$ action.
\end{proof}

Now, we are ready to calculate the quasisymmetric Frobenius characteristic of $\widehat{\mathcal{R}_n}$. Before we begin the proof, we need a definition and some facts about RSK. 

\begin{definition}\label{def: inv code}
    Define the inversion code of a permutation $w\in\mathfrak{S}_n$ to be the sequence $c(w)=(c_1, c_2, \dots, c_n)$ for $w\in\mathfrak{S}_n$ such that $c_i=|\{j<i\mid w^{-1}_j>w^{-1}_i\}|$ is the number of letters to the right of $i$ in the one-line notation of $w$ that are smaller than it. 
\end{definition}

This sequence is such that $0\leq c_i<i$ for all $i\leq n$. In particular, the inversion codes $c(w)$ are a variation of the well known Lehmer codes $L(w)$, with $$c(w)=\text{rev}(L(\text{flip}(\text{rev}(w^{-1}))))$$ where $\text{flip}(w)$ is the permutation defined by $\text{flip}(w)_i=n+1-w_i$. Thus, the inversion codes (and thus the permutations of  $\mathfrak{S}_n$) are in bijection with the sub-staircase monomials $\mathcal{A}_n$. 

Furthermore, we will also utilize the RSK correspondence and the following fact:

\begin{fact}(See \cite{sagan}.)
    Given a permutation $w\in\mathfrak{S}_n$, RSK assigns a pair of standard Young tableaux $(P,Q)$, where
    $$\text{ides}(P)=\text{ides}(w)$$ and $$\text{ides}(Q)=\text{des}(w).$$
\end{fact}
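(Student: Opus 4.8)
The plan is to establish the two descent identities separately, leaning on two classical structural properties of RSK: the row-bumping lemma (which compares the cells adjoined to the common shape by two consecutive row insertions) and the transpose symmetry $\mathrm{RSK}(w^{-1})=(Q,P)$ whenever $\mathrm{RSK}(w)=(P,Q)$.

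I would first prove $\text{ides}(Q)=\text{des}(w)$. Write $\mathrm{RSK}(w)=(P,Q)$ and let $P_k$ be the insertion tableau obtained from $w_1\cdots w_k$; by the definition of RSK, the cell of $Q$ carrying the label $k$ is precisely the cell added to $\mathrm{shape}(P_{k-1})$ when $w_k$ is row-inserted into $P_{k-1}$. Apply the row-bumping lemma to the two successive insertions, of $w_i$ into $P_{i-1}$ and then of $w_{i+1}$ into $P_i$: if $w_i<w_{i+1}$ then the cell created at step $i+1$ sits in a row of weakly smaller index (and strictly further right) than the cell created at step $i$; if $w_i>w_{i+1}$ then the cell created at step $i+1$ sits in a row of strictly larger index (and weakly further left). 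Since $i\in\text{des}(w)$ exactly when $w_i>w_{i+1}$, and since in the paper's French drawing ``$i$ is south of $i+1$'' means precisely that the cell of $i$ lies in a row of strictly smaller index than the cell of $i+1$, the first case yields $i\notin\text{ides}(Q)$ and the second yields $i\in\text{ides}(Q)$. Hence $\text{ides}(Q)=\text{des}(w)$. (This is the permutation case of Fact~\ref{fact: RSK ssyt ides}, so the same argument serves both places.)

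For the insertion tableau I would run the previous paragraph on $w^{-1}$. Transpose symmetry gives $\mathrm{RSK}(w^{-1})=(Q,P)$, so the insertion tableau $P$ of $w$ is the recording tableau of $w^{-1}$; therefore $\text{ides}(P)=\text{ides}\big(Q(w^{-1})\big)=\text{des}(w^{-1})$. Finally, the paper's own alternative description of the $i$-descent set of a permutation, $\text{ides}(w)=\{i\mid [w^{-1}]_i>[w^{-1}]_{i+1}\}$, is literally $\text{des}(w^{-1})$, so $\text{ides}(P)=\text{ides}(w)$, completing the proof.

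The main obstacle, and the only place with real content, is the row-bumping lemma itself: one proves by induction on the rows traversed that the bump path of the second insertion stays weakly to one side of the bump path of the first (strictly, in the rows they share), with the side determined by whether $x\le x'$ or $x>x'$, and that the terminal cells inherit the asserted weak/strict row comparison. I would prove this directly from the definition of row insertion rather than cite it, since the statement needed is precisely this relative-position version. The transpose symmetry $\mathrm{RSK}(w^{-1})=(Q,P)$ I would take as known --- it is cleanest via the permutation-matrix or growth-diagram formulation of RSK, where transposing the matrix visibly interchanges $P$ and $Q$ --- as reproving it would add length without touching the descent-set story. A secondary but genuine pitfall is bookkeeping: the bumping analysis naturally calls the long row ``row $1$'', while the paper's French pictures put row $1$ at the bottom and read ``south'' toward it, so I would state the lemma in terms of intrinsic row indices and translate to ``south'' only at the very end, to be sure the weak/strict inequalities point the right way.
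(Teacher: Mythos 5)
Your argument is correct: the row-bumping lemma gives $\text{ides}(Q)=\text{des}(w)$ exactly as you describe, and Sch\"utzenberger's symmetry $\mathrm{RSK}(w^{-1})=(Q,P)$ combined with the paper's identity $\text{ides}(w)=\text{des}(w^{-1})$ gives $\text{ides}(P)=\text{ides}(w)$, with the English/French row-index translation handled correctly. The paper offers no proof of this Fact --- it simply cites Sagan --- and your proof is essentially the standard one found in that reference, so there is nothing to compare beyond noting the agreement.
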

With this, we have our main result of this section.
\begin{theorem}\label{thm: graded coinvariants}
    $$F_{char}^Q(\widehat{\mathcal{R}_n};q)=\sum\limits_{\lambda\vdash n}\sum\limits_{Q\in\text{SYT}(\lambda)} q^{\text{maj}(Q)}s_{\lambda}=F_{char}(\mathcal{R}_n;q)$$
\end{theorem}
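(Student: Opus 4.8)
The plan is to reuse the template of Theorems~\ref{thm: quasi char of m} and~\ref{thm: specht qsym char}: build a composition series of $\widehat{\mathcal{R}_n}$ from a linear order on the basis $\mathcal{A}_n$, read off the fundamental quasisymmetric expansion of $F_{char}^Q(\widehat{\mathcal{R}_n};q)$ term by term, and then recognize the resulting generating function through the RSK correspondence. For the composition series I would list $\mathcal{A}_n$ in decreasing lexicographic order as $x^{\alpha^{(1)}}>\cdots>x^{\alpha^{(n!)}}$ and set $M_j=\operatorname{span}\bigl(x^{\alpha^{(j)}},\dots,x^{\alpha^{(n!)}}\bigr)$. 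Using the simplified form of the action, $\pibar_i(x^{\alpha})$ is always $0$, $-x^{\alpha}$, or $x^{s_i(\alpha)}$; in the last case $\alpha_i>\alpha_{i+1}$, which forces $s_i(\alpha)$ to still be sub-staircase, so by Lemma~\ref{lem: basis of coinv} the monomial $x^{s_i(\alpha)}$ lies in $\mathcal{A}_n$ (hence is already reduced) and, its first disagreement with $\alpha$ being a \emph{decrease} at position $i$, it is strictly smaller than $x^{\alpha}$ in lex order. Thus each $M_j$ is an $\mathcal{H}_n(0)$-submodule, $\faktor{M_j}{M_{j+1}}$ is one-dimensional generated by $x^{\alpha^{(j)}}$, and since $\pibar_i$ either scales this generator or pushes it into $M_{j+1}$ (invoking Proposition~\ref{prop: lt results} to pass between the two forms of the action), we get $\faktor{M_j}{M_{j+1}}\cong C_{I_j}$ with $I_j=\{i\mid \alpha^{(j)}_i<\alpha^{(j)}_{i+1}\}$. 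Therefore
\[
F_{char}^Q(\widehat{\mathcal{R}_n};q)=\sum_{x^{\alpha}\in\mathcal{A}_n} q^{\deg(x^{\alpha})}\,F_{\{\,i\,\mid\,\alpha_i<\alpha_{i+1}\,\}}.
\]

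Next I would translate this sum to $\mathfrak{S}_n$ via the inversion-code bijection of Definition~\ref{def: inv code}, writing $\alpha=c(w)$. On one hand $\deg(x^{c(w)})=\sum_i c_i(w)$ counts exactly the pairs of values $j<i$ with $j$ to the right of $i$, i.e. $\deg(x^{c(w)})=\operatorname{inv}(w)$. On the other hand, a short case check—splitting on whether $i$ sits to the left of or to the right of $i+1$ in $w$, and comparing $c_i(w)=\#\{j<i:\,j\text{ right of }i\}$ with $c_{i+1}(w)=\#\{j\le i:\,j\text{ right of }i+1\}$—shows $c_i(w)<c_{i+1}(w)$ precisely when $i$ occurs after $i+1$ in $w$, that is, $\{i\mid c_i(w)<c_{i+1}(w)\}=\operatorname{ides}(w)$. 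Hence
\[
F_{char}^Q(\widehat{\mathcal{R}_n};q)=\sum_{w\in\mathfrak{S}_n} q^{\operatorname{inv}(w)}\,F_{\operatorname{ides}(w)}.
\]

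To finish, I would replace $q^{\operatorname{inv}(w)}$ by $q^{\operatorname{maj}(w)}$ using the equidistribution of the pairs $(\operatorname{inv},\operatorname{ides})$ and $(\operatorname{maj},\operatorname{ides})$ on $\mathfrak{S}_n$, and then run the RSK argument exactly as in Theorem~\ref{thm: quasi char of m}: if $w\mapsto(P,Q)$ then $\operatorname{ides}(P)=\operatorname{ides}(w)$ and $\operatorname{ides}(Q)=\operatorname{des}(w)$, so $\operatorname{maj}(w)=\sum_{i\in\operatorname{des}(w)} i=\sum_{i\in\operatorname{ides}(Q)} i=\operatorname{maj}(Q)$, and summing over pairs of standard tableaux of equal shape gives
\[
\sum_{w\in\mathfrak{S}_n} q^{\operatorname{maj}(w)}F_{\operatorname{ides}(w)}
=\sum_{\lambda\vdash n}\Bigl(\sum_{Q\in\mathrm{SYT}(\lambda)}q^{\operatorname{maj}(Q)}\Bigr)\Bigl(\sum_{P\in\mathrm{SYT}(\lambda)}F_{\operatorname{ides}(P)}\Bigr)
=\sum_{\lambda\vdash n}\sum_{Q\in\mathrm{SYT}(\lambda)}q^{\operatorname{maj}(Q)}s_{\lambda},
\]
the last equality by Fact~\ref{fact: schurs and fundamentals}; this equals $F_{char}(\mathcal{R}_n;q)$ by the graded formula quoted from~\cite{grFrob}.

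I expect the $\operatorname{inv}\to\operatorname{maj}$ replacement to be the only real obstacle. It is not enough to refine by the ordinary descent set—$(\operatorname{inv},\operatorname{des})$ and $(\operatorname{maj},\operatorname{des})$ are \emph{not} equidistributed on $\mathfrak{S}_n$ already at $n=3$—so one genuinely needs the refinement by $\operatorname{ides}$. This can be handled either by citing or supplying a Foata-type transformation that preserves $\operatorname{ides}$ while exchanging $\operatorname{inv}$ and $\operatorname{maj}$, or by establishing the matching fiber-by-fiber: over the RSK preimage of a fixed recording tableau $Q$ of shape $\lambda$, the statistic $\operatorname{inv}$ should be distributed as $\sum_{P\in\mathrm{SYT}(\lambda)}q^{\operatorname{maj}(P)}$, which is precisely the $\operatorname{maj}$-side contribution of that fiber. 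Everything else is bookkeeping parallel to Sections~\ref{section: tabloids} and~\ref{section: specht}.
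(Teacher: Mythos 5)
Your proposal is correct and follows essentially the same route as the paper's proof: the lexicographic composition series on $\mathcal{A}_n$, the identification of each quotient with $C_{\{i\mid \alpha_i<\alpha_{i+1}\}}$, the inversion-code bijection converting the sum to $\sum_{w}q^{\operatorname{inv}(w)}F_{\operatorname{ides}(w)}$, the Foata bijection (which the paper also invokes, noting it preserves $\operatorname{ides}$ while exchanging $\operatorname{inv}$ and $\operatorname{maj}$), and the final RSK step. The "obstacle" you flag is resolved exactly as the paper resolves it, so no gap remains.
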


\begin{proof}
Assume that $\beta^j\in A_n$ for $1\leq j\leq n!$. Then, the monomials $x^{\beta^j}$ can be ordered lexicographically, from greatest to least such that $x^{\beta^1}$ is the largest monomial in the list. Then, let $\beta^j_k$ be the $k^{\text{th}}$ part of $\beta^j$.

We can define a composition series of our module $\widehat{\mathcal{R}_n}$ such that, if $M_j=\text{span},(\overline{x^{\beta^j}}, \overline{x^{\beta^{j+1}}}, \dots, \overline{x^{\beta^{n!}}})$, then
$$\widehat{\mathcal{R}_n}= M_1\supset M_2\supset\cdots M_{n!}=\emptyset.$$

We note a couple of facts about this composition series. First, using $[\cdot]_j$ to denote the equivalence classes in $\faktor{M_j}{M_{j+1}}$, we observe that $\faktor{M_j}{M_{j+1}}$ is one-dimensional, generated by $[\overline{x^{\beta^{j}}}]_j$. Furthermore, this composition series is, in fact, a composition series of submodules since we have $M_j\supset M_{j+1}$, as our action is based on $\leq$.

Since $\faktor{M_j}{M_{j+1}}=\langle [\overline{x^{\beta^{j}}}]_j\rangle\cong C_I$, where $I=\{i\mid \pibar_i(\overline{x^{\beta^{j}}})=-\overline{x^{\beta^{j}}}\}$. However, from the definition of the $\pibar_i$ action, this means $I=\{i\mid \beta^{j}_{i}<\beta^{j}_{i+1}\}$. Therefore,  
$$F_{char}^Q(\widehat{\mathcal{R}_n};q)=\sum\limits_{k=1}^{n!} q^{\text{deg}(x^{\beta^k})} F_{\{i\mid \beta^{k}_{i}<\beta^{k}_{i+1}\}}$$

Now, we know that the exponents $\beta^k$ are in bijection with permutations $w\in\mathfrak{S}_n$ with corresponding inversion code $c(w)$, where $c(w)$ counts the number of inversions as defined above. So, if $\beta^k_i<\beta^k_{i+1}$ for some $i$, then $c_i<c_{i+1}$ in $c(w)$. By definition of $c_i$, this means there are more letters to the right of $i+1$ that are smaller than it in $w$ than are to the right of $i$ that are smaller. In particular, this must mean that $i$ is to the right of $i+1$ in $w$. 
Conversely, if $i\in \text{ides}(w)$, $i+1$ is to the left of $i$ and every element smaller than $i$ and to the right of it is also smaller than $i+1$ and to it's right.  Since $i$ is also smaller than $i+1$ and to the right of $i+1$, we have $c_{i+1}>c_i$.
Thus, for a given $\beta^k$, we have $\{i\mid \beta^{k}_{i}<\beta^{k}_{i+1}\}=\text{ides}(w)$ for the unique $w\in\mathfrak{S}_n$ with $c(w)=\beta^k$.

Thus, we have the following:
\begin{align*}
    F_{char}^Q(\widehat{\mathcal{R}_n};q) &= \sum\limits_{k=1}^{n!} q^{\text{deg}(x^{\beta^k})} F_{\{i\mid \beta^{k}_{i}<\beta^{k}_{i+1}\}} \\
    &= \sum\limits_{w\in\mathfrak{S}_n} q^{\text{inv}(w)} F_{\text{ides}(w)}
\end{align*}

Next, we use the Foata bijection \cite{foata} to exploit the relationship between the inversion number and major index. In particular, we also note that the Foata bijection preserves the i-descent sets of the permutation. Thus, we have
    $$F_{char}^Q(\widehat{\mathcal{R}_n};q)=\sum\limits_{w\in\mathfrak{S}_n} q^{\text{maj}(w)} F_{\text{ides}(w)}$$

By the RSK correspondence on permutations in $\mathfrak{S}_n$, we have a bijection to pairs of standard Young tableaux, giving us 
\begin{align*}
    F_{char}^Q(\widehat{\mathcal{R}_n};q) &= \sum\limits_{w\in\mathfrak{S}_n} q^{\text{maj}(w)} F_{\text{ides}(w)} \\
    &= \sum\limits_{\lambda\vdash n} \sum\limits_{Q\in\text{SYT}(\lambda)} q^{\text{maj}(Q)}\sum\limits_{P\in\text{SYT}(\lambda)} F_{\text{ides}(P)} \\
    &= \sum\limits_{\lambda\vdash n} \sum\limits_{Q\in\text{SYT}(\lambda)} q^{\text{maj}(Q)}s_{\lambda}
\end{align*}
\end{proof}

\begin{corollary}
    $$F_{char}^Q(\widehat{\mathcal{R}_n})=\sum\limits_{\lambda\vdash n} f^{\lambda} s_{\lambda}=F_{char}(\mathcal{R}_n)$$
\end{corollary}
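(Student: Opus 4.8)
The plan is to obtain the corollary as the $q=1$ specialization of Theorem~\ref{thm: graded coinvariants}, which I would carry out in three short steps.

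First, I would observe that the graded quasisymmetric Frobenius characteristic collapses to the ungraded one at $q=1$: since $F_{char}^Q(\langle x^{\alpha}\rangle;q)=q^{\deg(x^{\alpha})}F_{\{i\mid \pibar_i(x^{\alpha})=-x^{\alpha}\}}$ by definition, setting $q=1$ simply deletes the power of $q$ from every term of the composition series used in the proof of Theorem~\ref{thm: graded coinvariants}, so that $F_{char}^Q(\widehat{\mathcal{R}_n};q)\big|_{q=1}=F_{char}^Q(\widehat{\mathcal{R}_n})$. The same specialization on the right-hand side of Theorem~\ref{thm: graded coinvariants} gives
\[
\sum_{\lambda\vdash n}\sum_{Q\in\mathrm{SYT}(\lambda)} 1\cdot s_{\lambda}=\sum_{\lambda\vdash n}\lvert\mathrm{SYT}(\lambda)\rvert\, s_{\lambda}=\sum_{\lambda\vdash n} f^{\lambda}s_{\lambda},
\]
using $f^{\lambda}=\lvert\mathrm{SYT}(\lambda)\rvert$.

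Second, I would invoke Chevalley's result, already recorded in the excerpt, that with the $\mathfrak{S}_n$-action on $\mathcal{R}_n$ one has $F_{char}(\mathcal{R}_n)=\sum_{\lambda\vdash n} f^{\lambda}s_{\lambda}$ (equivalently, this is the $q=1$ specialization of the graded Frobenius formula $F_{char}(\mathcal{R}_n;q)=\sum_{\lambda}\sum_{T\in\mathrm{SYT}(\lambda)}q^{\mathrm{maj}(T)}s_{\lambda}$). Chaining the two specializations then yields $F_{char}^Q(\widehat{\mathcal{R}_n})=\sum_{\lambda} f^{\lambda}s_{\lambda}=F_{char}(\mathcal{R}_n)$, which is the claim.

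There is essentially no obstacle here: the only point requiring a word of justification is that specialization at $q=1$ is legitimate on both sides, i.e.\ that the graded pieces in the composition series of $\widehat{\mathcal{R}_n}$ sum (after forgetting the grading) to the ungraded $F_{char}^Q$, and this is immediate from the definition of $F_{char}^Q$ on a composition series together with the monomial-ring formula above. Accordingly the proof is a one-line deduction from Theorem~\ref{thm: graded coinvariants}.
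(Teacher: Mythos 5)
Your proposal is correct and matches the paper's proof, which likewise sets $q=1$ in Theorem~\ref{thm: graded coinvariants} and recalls that $f^{\lambda}=\lvert\mathrm{SYT}(\lambda)\rvert$. The extra care you take to justify the $q=1$ specialization and to cite Chevalley's result for the final equality is consistent with, and slightly more explicit than, the paper's one-line argument.
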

\begin{proof}
    Given Theorem \ref{thm: graded coinvariants}, set $q=1$. The result follows, recalling that $f^{\lambda}$ is the number of standard Young tableaux of shape $\lambda$.
\end{proof}

\begin{remark}
Throughout this section, we recall that we used lexicographic ordering to define an ordering on $\mathcal{R}_n$. However, we could choose any monomial ordering for which we have $x_i^c>x_{i+1}^c$ for all $i$ and $c$ and get the same resulting quasisymmetric Frobenius image. For example, this result holds for $\leq$ defined based on degree reverse lexicographic ordering, negative degree lexicographic ordering, and similar orderings. 
\end{remark}

 $(\mathcal{R}_n, \mathcal{A}_n, \leq_r)$ does not satisfy the definition of SQCC given in Definition \ref{def: sqcc}. In particular, our $\mathcal{H}_n(0)$-action on $\widehat{\mathcal{R}_n}$ required comparisons of leading terms due to the more complicated nature of the $\mathfrak{S}_n$-action on $\mathcal{R}_n$. As a result, we now introduce a weaker compatibility, which is satisfied by all three examples presented.

\begin{definition}\label{def: wqcc}
    Let $M$ be a $\mathfrak{S}_n$-module with basis $\mathcal{B}=\{v_1, \dots, v_n\}$ and a total ordering $\leq$ on $\mathcal{B}$. On the elements of $\mathcal{B}$, let $$\overline{\pi}_i(v_k)=\begin{cases}
            0 & \text{tt}(s_i(v_k))=v_k \\
            -v_k & \text{lt}(s_i(v_k))=v_j> v_k\text{ or } \text{lt}(s_i(v_k))=-v_k \\
            s_i(v_k) & \text{lt}(s_i(v_k))=v_j< v_k
        \end{cases}.$$
     In particular, assume all elements of $\mathcal{B}$ fall into one of these three cases.
    When $\pibar_i$ is a $\mathcal{H}_n(0)$-action on the vector space of $M$, call the resulting module $\widehat{M}$. We say $(M,\mathcal{B},\leq)$ is \textbf{weakly quasisymmetric characteristic compatible} (WQCC) if $$F_{char}^Q(\widehat{M})=F_{char}(M).$$
   
\end{definition}

As the name suggests, every triple $(M, \mathcal{B}, \leq)$ that is SQCC is also WQCC. Additionally, we have:

\begin{theorem}\label{thm: coinv wqcc}
    $(\mathcal{R}_n, \mathcal{A}_n, \leq_r)$ is WQCC.
\end{theorem}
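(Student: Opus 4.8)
The plan is to notice that almost everything needed is already in place from Section \ref{section: coinv}: the theorem reduces to verifying that the $\pibar_i$-action we constructed on $\widehat{\mathcal{R}_n}$ is exactly the action that Definition \ref{def: wqcc} prescribes for the triple $(\mathcal{R}_n,\mathcal{A}_n,\leq_r)$, and then invoking the graded Frobenius computation of Theorem \ref{thm: graded coinvariants}.

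First I would write out Definition \ref{def: wqcc} in this concrete setting. Since the $\mathfrak{S}_n$-action on $\mathcal{R}_n$ is $s_i(x^\alpha)=\widetilde{x^{s_i\alpha}}$ and the basis is $\mathcal{A}_n$ ordered by lexicographic order $\leq_r$, the template reads
\[
\pibar_i(x^\alpha)=\begin{cases} 0 & \text{tt}(\widetilde{x^{s_i\alpha}})=x^\alpha\\ -x^\alpha & \text{lt}(\widetilde{x^{s_i\alpha}})>x^\alpha \ \text{or}\ \text{lt}(\widetilde{x^{s_i\alpha}})=-x^\alpha\\ \widetilde{x^{s_i\alpha}} & \text{lt}(\widetilde{x^{s_i\alpha}})<x^\alpha\end{cases}.
\]
I would then match these three branches, in the cases $\alpha_i=\alpha_{i+1}$, $\alpha_i<\alpha_{i+1}$, and $\alpha_i>\alpha_{i+1}$, to the simplified action $\pibar_i(x^\alpha)\in\{0,\,-x^\alpha,\,x^{s_i\alpha}\}$ that Section \ref{section: coinv} already proves defines a valid $\mathcal{H}_n(0)$-module structure on $\widehat{\mathcal{R}_n}$. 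If $\alpha_i=\alpha_{i+1}$ then $s_i\alpha=\alpha$ and $\widetilde{x^{s_i\alpha}}=x^\alpha$, so the trailing (and leading) term is $x^\alpha$ and the first branch applies; moreover, since $x^{s_i\alpha}$ is reducible only when $\alpha_{i+1}=i$, Proposition \ref{prop: lt results} forces the reduced polynomial $\widetilde{x^{s_i\alpha}}$ to have trailing term different from $x^\alpha$ whenever $\alpha_i\neq\alpha_{i+1}$, so the first branch is triggered exactly by $\alpha_i=\alpha_{i+1}$. If $\alpha_i>\alpha_{i+1}$ then $x^{s_i\alpha}$ is already sub-staircase and lexicographically smaller than $x^\alpha$, landing in the third branch with $\pibar_i(x^\alpha)=x^{s_i\alpha}$. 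If $\alpha_i<\alpha_{i+1}$ then Proposition \ref{prop: lt results} gives $\text{lm}(\widetilde{x^{s_i\alpha}})\geq x^\alpha$, with leading term equal to $-x^\alpha$ when equality holds, landing in the second branch with $\pibar_i(x^\alpha)=-x^\alpha$. This identifies the two actions.

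With the identification in hand the conclusion is immediate: the proposition preceding Theorem \ref{thm: graded coinvariants} shows $\widehat{\mathcal{R}_n}=\widehat{M}$ is a genuine $\mathcal{H}_n(0)$-module, and the corollary immediately following Theorem \ref{thm: graded coinvariants} (the $q=1$ specialization) gives $F_{char}^Q(\widehat{\mathcal{R}_n})=\sum_{\lambda\vdash n}f^\lambda s_\lambda=F_{char}(\mathcal{R}_n)$, which is precisely the defining condition of WQCC.

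The only step with any real content is the second branch of the case analysis, where one needs the reduction of the swapped monomial to have leading monomial $\geq x^\alpha$ and, in the tie case, leading term exactly $-x^\alpha$; but this is exactly the content of Proposition \ref{prop: lt results}, so no new argument is required. I would also take slight care with the ``$\text{tt}$'' characterization of the zero branch, since the two equivalent presentations of the $\widehat{\mathcal{R}_n}$-action in Section \ref{section: coinv} phrase that case differently ($x^{s_i\alpha}=x^\alpha$ versus $\alpha_i=\alpha_{i+1}$), and one wants to confirm the trailing-term formulation agrees with both.
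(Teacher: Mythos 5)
Your proposal is correct and follows the same route as the paper, whose proof is simply the one-line observation that the result follows from Theorem \ref{thm: graded coinvariants} and Definition \ref{def: wqcc}; you additionally spell out the case-by-case verification (via Proposition \ref{prop: lt results}) that the $\pibar_i$-action of Section \ref{section: coinv} instantiates the WQCC template, which the paper leaves implicit since it defined the action on $\widehat{\mathcal{R}_n}$ in exactly that leading/trailing-term form. No gaps.
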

\begin{proof}
    This follows from Theorem \ref{thm: graded coinvariants} and Definition \ref{def: wqcc}. 
\end{proof}

\noindent Extensive computer simulation suggests the following additional example of a WQCC triple:

\begin{conjecture}
    Let $\mathcal{R}_{\mu}$ denote the Garsia-Procesi modules studied in \cite{gpModules} with basis $\mathcal{B}_{\mu}$. Then, under the standard lexicographic monomial ordering $\leq_r$, $(\mathcal{R}_{\mu}, \mathcal{B}_{\mu}, \leq_r)$ is WQCC. 
\end{conjecture}

\section{Open Questions}\label{section: open}

There are two natural motivations for deforming a $\mathfrak{S}_n$-module to create a $\mathcal{H}_n(0)$-module:

\begin{itemize}
    \item To better understand the $\mathfrak{S}_n$-module and
    \item To create natural $\mathcal{H}_n(0)$-modules of interest.
\end{itemize}

Focusing on the first motivation, many of the best known open conjectures in this area of algebraic combinatorics give the graded Frobenius characteristic of an $\mathfrak{S}_n$-module as a conjectured sum expressed in terms of Gessel's fundamental basis and only conjectured to be Schur positive. Such conjectures suggest an important open question:

\begin{oquestion}
    Are there natural conditions one can place on a triple $(M,\mathcal{B},\leq)$ such that the $\pibar_i$ action in Definition \ref{def: wqcc} is always valid and $$F_{char}^Q(\widehat{M})=F_{char}(M)?$$
\end{oquestion}

While this paper explores a uniform deformation of $\mathfrak{S}_n$-modules for the purposes of studying their Frobenius image, our approach has the drawback of mapping quotient spaces to less natural modules from a purely algebraic perspective.  If a primary goal is to learn more about the underlying $\mathfrak{S}_n$-module, this is perhaps a reasonable trade off, but a different sensible goal is to try to preserve quotient spaces as quotient spaces.  A limitation to this approach is that deformations of an $\mathfrak{S}_n$-action as a $\mathcal{H}_n(0)$-action may cause an underlying ideal to no longer be closed under the newly defined action. Thus a a related underlying question is:
\begin{oquestion} 
Let $I$ be an ideal of $\mathfrak{S}_n$-module $M$ which is closed under the $\mathfrak{S}_n$-action.  Is there a natural way to define a related ideal $\widetilde{I}$ and $\mathcal{H}_n(0)$-module $\widetilde{M}$ such that $\widetilde{I}$ is closed under the $\mathcal{H}_n(0)$-action and such that $$F_{char}^Q\left(\faktor{\widetilde{M}}{\widetilde{I}}\right)=F_{char}\left(\faktor{M}{I}\right)?$$
\end{oquestion}

Huang and Rhoades in  \cite{huang2018ordered} explored the answer to this open question in the case of the generalized coinvariant algebra defined by Haglund, Rhoades, and Shimozono in \cite{haglund}. The following definition reduces to $\mathcal{R}_n$, the space we choose to explore earlier in this paper, when restricted to $k=n$. 

\begin{definition}[Huang, Rhoades, Shimozono \cite{haglund}]
    Consider the ideal $$I_{n,k}=\langle x_1^k, \dots, x_n^k, e_n(x_1, \dots, x_n), e_{n-1}(x_1, \dots, x_n), \dots, e_{n-k+1}(x_1, \dots, x_n)\rangle.$$ Define the generalized coinvariant algebra as the quotient $$\mathcal{R}_{n,k}=\faktor{\mathbb{C}[x_1, \dots, x_n]}{I_{n,k}}.$$
\end{definition}
In particular, Huang and Rhoades studied a different deformation of the natural $\mathfrak{S}_n$-action, where the $\mathcal{H}_n(0)$-action is defined by the Demazure operators: $$\pi_i(f)=\frac{x_if-x_{i+1}(s_i(f))}{x_i-x_{i+1}}.$$ However, they noted the ideal $I_{n,k}$ is not closed under this Demazure action. In \cite{huang2018ordered}, they define an analogous ideal $J_{n,k}$ that is closed under the Demazure action. 
\begin{definition}[Huang, Rhoades \cite{huang2018ordered}]
    Define the ideal $$J_{n,k}=\langle e_n(x_1, \dots, x_n), e_{n-1}(x_1, \dots, x_n), \dots, e_{n-k+1}(x_1, \dots, x_n), h_k(x_1), h_k(x_1,x_2),\dots, h_k(x_1, \dots, x_n)\rangle.$$ Then, $$S_{n,k}=\faktor{\mathbb{C}[x_1, \dots, x_n]}{J_{n,k}}.$$
\end{definition}

In particular, their main result is as follows:

\begin{theorem}[Huang, Rhoades \cite{huang2018ordered}]
    $$F_{char}^Q(S_{n,k})=F_{char}(R_{n,k})$$
\end{theorem}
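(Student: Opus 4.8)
The plan is to run, for $S_{n,k}$, the same three-step argument used to prove Theorem \ref{thm: graded coinvariants}, now with the Demazure operators $\pi_i$ (equivalently $\overline{\pi}_i=\pi_i-\mathrm{id}$, which satisfies $\overline{\pi}_i^2=-\overline{\pi}_i$ since $\pi_i^2=\pi_i$) in place of the $s_i$-deformations, and with the combinatorics of ordered set partitions of $[n]$ into $k$ blocks in place of inversion codes. First I would fix a monomial order on $\mathbb{C}[x_1,\dots,x_n]$ refining total degree and with $x_i^c>x_{i+1}^c$, record a Gr\"obner basis of $J_{n,k}$, and read off a monomial basis $\mathcal{C}_{n,k}$ of $S_{n,k}$; the generators $h_k(x_1,\dots,x_i)$ were chosen by Huang and Rhoades precisely so that $J_{n,k}$ is $\pi_i$-stable, and one also checks (as part of their work, or by a direct Hilbert-series computation) that $S_{n,k}$ has the same graded dimensions as $\mathcal{R}_{n,k}$. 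The key preliminary lemma, proved by the same leading-term bookkeeping as Proposition \ref{prop: lt results}, is that on this basis the reduced Demazure action takes the three-case form of Definition \ref{def: wqcc}: for $x^a\in\mathcal{C}_{n,k}$, the element $\overline{\pi}_i(x^a)$, after reduction modulo $J_{n,k}$, is $0$, $-x^a$, or a strictly $\leq$-smaller basis monomial $\widetilde{x^{s_ia}}$, with the first two cases governed by the local comparison of the exponents $a_i$ and $a_{i+1}$. This exhibits $(\mathcal{R}_{n,k},\mathcal{C}_{n,k},\leq)$ as a WQCC-style triple realized by $S_{n,k}$, and in particular produces the composition series $S_{n,k}=M_1\supset M_2\supset\cdots\supset\emptyset$ with $M_j=\mathrm{span}(x^{\beta^j},x^{\beta^{j+1}},\dots)$ exactly as in the proof of Theorem \ref{thm: graded coinvariants}.

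Granting the lemma, each quotient $\faktor{M_j}{M_{j+1}}\cong C_{I_j}$ with $I_j=\{i\mid\overline{\pi}_i(x^{\beta^j})=-x^{\beta^j}\}$ determined by the local exponent comparisons, so
$$F_{char}^Q(S_{n,k};q)=\sum_{x^{\beta}\in\mathcal{C}_{n,k}}q^{\deg(x^{\beta})}F_{I(\beta)}.$$
The remaining work is purely combinatorial: exhibit a bijection between $\mathcal{C}_{n,k}$ and the ordered set partitions of $[n]$ into $k$ blocks --- the analogue of the inversion-code bijection $\mathcal{A}_n\leftrightarrow\mathfrak{S}_n$ used earlier --- under which $\deg(x^{\beta})$ becomes the relevant $q$-statistic ($\mathrm{coinv}$) and $I(\beta)$ becomes the $i$-descent set, and then invoke the fundamental-quasisymmetric expansion of $\mathrm{grFrob}(\mathcal{R}_{n,k};q)$ established by Haglund--Rhoades--Shimozono in \cite{haglund}, which already packages the Schur-positive answer (and reduces to Theorem \ref{thm: graded coinvariants} when $k=n$). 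Matching the two sums term by term --- after, if needed, an equidistribution on ordered set partitions parallel to the use of the Foata bijection in Theorem \ref{thm: graded coinvariants} --- gives $F_{char}^Q(S_{n,k};q)=\mathrm{grFrob}(\mathcal{R}_{n,k};q)$, and setting $q=1$ recovers the ungraded statement $F_{char}^Q(S_{n,k})=F_{char}(\mathcal{R}_{n,k})$.

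I expect the main obstacle to be the triangularity lemma for the Demazure action. Unlike the $s_i$-swap on tabloids or the $\mathcal{R}_n$ action, where reduction by the Gr\"obner basis is essentially a single divided-difference step, $\pi_i$ produces an entire ``interpolation'' sum $x_i^a x_{i+1}^b+x_i^{a-1}x_{i+1}^{b+1}+\cdots+x_i^b x_{i+1}^a$ whose intermediate monomials typically lie outside $\mathcal{C}_{n,k}$ and must be straightened through the $h_k(x_1,\dots,x_i)$ relations. One has to verify that after full reduction the leading term among the basis representatives is still the expected $\widetilde{x^{s_ia}}$ (or $\pm x^a$), that the sign and the local-exponent characterization of the $-x^a$ case survive the straightening, and that no higher $\leq$-monomial is created. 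This is exactly where the specific choice of generators of $J_{n,k}$ and the order-compatibility $x_i^c>x_{i+1}^c$ (cf.\ the remark following Theorem \ref{thm: graded coinvariants}) must be used carefully; once it is in hand, the rest is the now-familiar composition-series-plus-RSK computation.
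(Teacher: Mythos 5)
First, a point of order: the paper does not prove this statement. It is quoted verbatim from Huang and Rhoades \cite{huang2018ordered} in the closing discussion as motivation for an open question, so there is no internal proof to compare your proposal against; what follows evaluates your outline on its own terms.

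Your plan is a reasonable reconstruction of the general shape of the Huang--Rhoades argument (a filtration of $S_{n,k}$ with one-dimensional quotients $C_I$, a bijection to ordered set partitions carrying degree to a $q$-statistic and the loop-set to an $i$-descent set, and a match against the Haglund--Rhoades--Shimozono fundamental expansion of $\mathrm{grFrob}(\mathcal{R}_{n,k};q)$). However, the central step is deferred rather than proved, and as stated it is not quite the right statement. The reduced Demazure action on a monomial basis does \emph{not} take the three-case single-monomial form of Definition \ref{def: wqcc}: already before reduction, $\overline{\pi}_i(x_i^a x_{i+1}^b) = \sum_{j=1}^{a-b} x_i^{a-j}x_{i+1}^{b+j}$ when $a>b$, a sum of $a-b$ monomials, and reduction modulo $J_{n,k}$ will in general leave a linear combination of several basis monomials, not a single $\widetilde{x^{s_i\alpha}}$. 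So the lemma you actually need is weaker and different: that $\overline{\pi}_i(x^{\beta^j}) \equiv 0$ or $-x^{\beta^j}$ modulo the span of strictly smaller basis monomials, with the sign case characterized by the local exponent comparison. That triangularity statement (together with the Hilbert-series equality $\dim_q S_{n,k} = \dim_q \mathcal{R}_{n,k}$ and the maj/coinv-type equidistribution on ordered set partitions you invoke ``if needed'') is where essentially all of the content of the Huang--Rhoades theorem lives; none of it is supplied here. As written, the proposal is a plausible strategy with its load-bearing steps left as assertions, not a proof.
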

It would be interesting to see if a more general procedure could be developed on all quotient spaces of $\mathbb{C}[x_1,\dots x_n]$ that similarly leave the Frobenius image unchanged and have this construction of Rhoades and Huang as a special case.

\begin{acknowledgement}
We'd like to thank Rob McCloskey and Jonathan Bloom for their thoughtful discussions throughout the development of this paper.  The first author wishes to gratefully acknowledge the Institut Mittag-Leffler for graciously hosting her during a portion of this work. 
\end{acknowledgement}

\bibliographystyle{unsrt}
\bibliography{ref}

\end{document}